\definecolor{ANDREW}{RGB}{255,127,0}
\theoremstyle{plain}
\newtheorem{proposition}{Proposition}[section]
\newtheorem{theorem}[proposition]{Theorem}
\newtheorem{lemma}[proposition]{Lemma}
\newtheorem{corollary}[proposition]{Corollary}
\theoremstyle{definition}
\newtheorem{definition}[proposition]{Definition}
\theoremstyle{remark}
\newtheorem{remark}[proposition]{Remark}
\DeclareMathOperator{\Aut}{Aut}
\DeclareMathOperator{\End}{End}
\DeclareMathOperator{\GL}{GL}
\DeclareMathOperator{\Proj}{Proj}
\DeclareMathOperator{\SL}{SL}
\DeclareMathOperator{\Prop}{Prop}
\DeclareMathOperator{\Ext}{Ext}
\DeclareMathOperator{\Imag}{Im}
\DeclareMathOperator{\PGL}{PGL}
\DeclareMathOperator{\Id}{Id}
\DeclareMathOperator{\Lin}{Lin}
\DeclareMathOperator{\kob}{kob}
\DeclareMathOperator{\Cc}{\mathcal{C}}
\DeclareMathOperator{\Uc}{\mathcal{U}}
\DeclareMathOperator{\Ab}{\mathbb{A}}
\DeclareMathOperator{\Cb}{\mathbb{C}}
\DeclareMathOperator{\Hb}{\mathbb{H}}
\DeclareMathOperator{\Nb}{\mathbb{N}}
\DeclareMathOperator{\Pb}{\mathbb{P}}
\DeclareMathOperator{\Rb}{\mathbb{R}}
\DeclareMathOperator{\Tb}{\mathbb{T}}
\DeclareMathOperator{\Zb}{\mathbb{Z}}
\newcommand{\abs}[1]{\left|#1\right|}
\newcommand{\norm}[1]{\left\|#1\right\|}
\newcommand{\wt}[1]{\widetilde{#1}}
\newcommand{\wh}[1]{\widehat{#1}}
\newcommand{\ip}[1]{\left\langle #1 \right\rangle}
\begin{document}

\title[Maps between real projective manifolds]{The structure of projective maps between real projective manifolds}
\author{Andrew M. Zimmer}\address{Department of Mathematics, University of Chicago, Chicago, IL 60637.}
\email{aazimmer@uchicago.edu}
\date{\today}
\keywords{Real projective manifolds, normal families, Hilbert metric, Kobayashi metric}
\subjclass[2010]{}

\begin{abstract} In this paper we study the set of projective maps between compact proper convex real projective manifolds. We show that this set contains only finitely many distinct homotopy classes and each homotopy class has the structure of a real projective manifold. When the target manifold is strictly convex, our results imply that each non-trivial homotopy class contains at most one projective map. These results are motivated by the  theory of holomorphic maps between compact complex manifolds. \end{abstract}

\date{\today}

\maketitle

\section{Introduction}

A real projective structure on a manifold $M$ is an open cover $M = \cup_{\alpha} U_\alpha$ along with coordinate charts $\varphi_\alpha : U_{\alpha} \rightarrow \Pb(\Rb^{d+1})$  such that each transition function $\varphi_{\alpha} \circ \varphi_{\beta}^{-1}$ coincides with the restriction of an element in $\PGL_{d+1}(\Rb)$. A \emph{real projective manifold} is a manifold equipped with a real projective structure. Precise definitions are given in Section~\ref{sec:real_proj_geom}.

An important class of real projective manifolds are the so-called \emph{convex real projective manifolds}. These are the real projective manifolds that can be identified as a quotient $M= \Gamma \backslash \Omega$ where $\Gamma \leq \PGL_{d+1}(\Rb)$ is a discrete group acting properly discontinuously and freely on a convex open set $\Omega \subset \Pb(\Rb^{d+1})$. Such a manifold is called \emph{proper} if $\Omega$ is a proper convex set and \emph{strictly convex} if $\Omega$ is a strictly convex set. More background can be found in the survey papers by Benoist~\cite{B2008}, Goldman~\cite{G2009}, Marquis~\cite{M2013}, and Quint~\cite{Q2010}.

Many compact manifolds have a convex real projective structure, for instance: every real hyperbolic manifold; the locally symmetric spaces associated to $\SL_d(\Rb)$, $\SL_d(\Cb)$, $\SL_d(\Hb)$, and $E_{6(-26)}$; many examples in low dimensions (see for instance \cite{B2006, V1971, VK1967}); and Kapovich~\cite{K2007} has shown that many of the Gromov-Thurston examples of manifolds with negative curvature have a strictly convex real projective structure. Moreover, some of these examples have a non-trivial moduli space of real projective structures. 

A \emph{projective map} $f:M_1 \rightarrow M_2$ between two projective manifolds is a map where $\varphi_{\alpha} \circ f \circ \phi_{\beta}^{-1}$ is the restriction of a projective map for any coordinate chart $\phi_{\beta}$ of $M_1$ and $\varphi_{\alpha}$ of $M_2$. Let $\Proj(M_1,M_2)$ denote the space of projective maps endowed with the compact-open topology and let $\Aut(M)$ denote the projective homeomorphisms $M \rightarrow M$. 

In this paper we study the set of projective maps between proper convex real projective manifolds. One of our main results is the following finiteness theorem:

\begin{theorem}\label{thm:main}
Suppose $M_1$ and $M_2$ are compact proper convex real projective manifolds. If $M_2$ is strictly convex then the set of non-constant maps in $\Proj(M_1,M_2)$ is finite. Moreover, each non-trivial homotopy class contains at most one real projective map. 
\end{theorem}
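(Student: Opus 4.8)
The plan is to pass to universal covers and work with the Hilbert metric. Write $M_1 = \Gamma_1 \backslash \Omega_1$ and $M_2 = \Gamma_2 \backslash \Omega_2$ with $\Omega_1, \Omega_2$ proper convex and $\Omega_2$ strictly convex, and equip both with their Hilbert metrics $d_{\Omega_1}, d_{\Omega_2}$; since $M_1$, $M_2$ are compact these metrics descend to complete metrics of finite diameter on the quotients. A key point I would establish first is that any projective map $f : M_1 \to M_2$ lifts to a projective map $\wt{f} : \Omega_1 \to \Omega_2$ between the convex domains — i.e. the restriction of some $g \in \PGL_{d+1}(\Rb)$ — and that such a map is automatically $1$-Lipschitz for the Hilbert metrics (projective maps between convex domains do not increase the Hilbert metric, by the usual cross-ratio argument; this is presumably proved earlier in the paper). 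Moreover $f$ is non-constant if and only if $\wt{f}$ is an injective projective embedding (a projective map of a convex open set is either constant or a local, hence global, diffeomorphism onto its image).

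Next I would show finiteness within a fixed homotopy class, and in fact uniqueness. Suppose $f_0, f_1 : M_1 \to M_2$ are non-constant projective maps that are homotopic. Lift to $\wt{f_0}, \wt{f_1} : \Omega_1 \to \Omega_2$ inducing the same homomorphism $\rho : \Gamma_1 \to \Gamma_2$ on fundamental groups (after conjugating one lift by a deck transformation). For a fixed basepoint $x_0 \in \Omega_1$ the two images $\wt{f_0}(x_0), \wt{f_1}(x_0)$ differ by a bounded $d_{\Omega_2}$-distance $C$ (compactness of $M_2$ and the homotopy), and by $\rho$-equivariance $d_{\Omega_2}(\wt{f_0}(\gamma x_0), \wt{f_1}(\gamma x_0)) \le C$ for all $\gamma \in \Gamma_1$, using that both maps are $1$-Lipschitz and $\Gamma_2$ acts by isometries. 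Thus $\wt{f_0}$ and $\wt{f_1}$ stay a bounded Hilbert distance apart on the $\Gamma_1$-orbit of $x_0$, hence (by $1$-Lipschitzness and cocompactness of $\Gamma_1 \curvearrowright \Omega_1$) a bounded distance apart everywhere. The strict convexity of $\Omega_2$ now forces $\wt{f_0} = \wt{f_1}$: two projective embeddings of $\Omega_1$ into a strictly convex $\Omega_2$ that stay a bounded Hilbert distance apart must coincide — for a strictly convex domain, the Hilbert geometry is Gromov-hyperbolic-like at infinity in the sense that boundary points are distinguished, so the two maps induce the same boundary map $\partial \Omega_1 \to \partial \Omega_2$ along the (biproximal) limit set, and a projective transformation is determined by enough boundary data. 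Concretely, $g_1^{-1} g_0$ (where $g_i \in \PGL_{d+1}(\Rb)$ realizes $\wt{f_i}$) commutes with $\rho(\Gamma_1)$ up to bounded displacement; strict convexity makes $\rho(\Gamma_1)$ act with a limit set not contained in a proper projective subspace, and an element of $\PGL_{d+1}(\Rb)$ that moves such a limit set a bounded amount must fix it pointwise, hence be trivial.

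Finally, for the global finiteness statement, I would run a normal-families / Arzelà–Ascoli argument. The set of non-constant $f \in \Proj(M_1, M_2)$ is equicontinuous (all lifts are $1$-Lipschitz, so all the $f$ are $1$-Lipschitz for the quotient metrics) and the target is compact, so $\Proj(M_1,M_2)$ is itself compact in the compact-open topology; I then need that the non-constant maps are "isolated," i.e. form a discrete subset, which combined with compactness gives finiteness. Isolation follows from the uniqueness-per-homotopy-class result once I know there are only finitely many relevant homotopy classes: a sequence $f_n$ of distinct non-constant projective maps would subconverge to a projective limit $f_\infty$; if $f_\infty$ is non-constant then eventually all $f_n$ are homotopic to $f_\infty$ (the space of maps is locally path-connected enough that nearby maps are homotopic), contradicting uniqueness; and the case $f_\infty$ constant is ruled out by a lower bound on the "size" of a non-constant projective map — e.g. a non-constant projective $\wt f$ has a definite lower bound on how much it expands some fixed small Hilbert ball, coming from the fact that its image is an open convex subset on which $\Gamma_2$ acts, so its diameter is bounded below. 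The hard part will be this last dichotomy, and more precisely making rigorous the claim that two homotopic bounded-distance projective embeddings into a strictly convex domain must be equal; everything else is a reasonably standard packaging of the Hilbert-metric contraction property with equivariance and compactness.
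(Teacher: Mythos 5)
Your overall strategy --- distance-decreasing lifts, bounded displacement of two homotopic equivariant lifts, and then forcing equality from boundary behavior in the strictly convex target --- is the same as the paper's. But there are two genuine gaps. First, your claim that a non-constant projective map of a convex open set is ``an injective projective embedding,'' realized by some $g\in\PGL_{d+1}(\Rb)$, is false: the lift of a projective map $M_1\to M_2$ is induced by an element of $\Pb(\Lin(\Rb^{d_1+1},\Rb^{d_2+1}))$ which may have a nontrivial kernel whose projectivization simply misses $\Omega_1$; such a map collapses $\Omega_1$ onto a lower-dimensional convex set without being constant (e.g.\ a projection of a planar domain onto a segment). Consequently your ``concretely, $g_1^{-1}g_0$ commutes with $\rho(\Gamma_1)$ up to bounded displacement'' step does not typecheck --- the lifts need not be invertible, nor even of the same rank a priori. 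The paper repairs exactly this by first proving a factorization $f=\overline{f}\circ p$ through a compact proper convex quotient $N$ with $\overline{f}$ locally injective and $p$ depending only on the homotopy class (Proposition~\ref{prop:factor}), and then proving uniqueness only for locally injective maps; some such reduction is unavoidable, and your proposal is missing it.

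Second, the step you yourself flag as ``the hard part'' --- that two homotopic, bounded-Hilbert-distance projective embeddings into a strictly convex $\Omega_2$ coincide --- is precisely the content that needs a proof, and the limit-set/rigidity sketch you give is not a proof. The actual mechanism is: (i) show $T_0(\partial\Omega_1)\subset\partial\Omega_2$, which uses proper discontinuity of the $\Gamma_2$-action together with injectivity of $\rho$ (itself a consequence of local injectivity of $f_0$, another place where the factorization is needed); (ii) use strict convexity to say every point of $\partial\Omega_2$ is an extreme point, so that two sequences at bounded Hilbert distance converging into $\partial\Omega_2$ must have the same limit (Lemma~\ref{lem:bd_behavior}), whence $T_0=T_1$ on $\partial\Omega_1$; (iii) recover $T_i$ on the interior from its boundary values by writing $T_i(p)=T_i(\ell_1)\cap T_i(\ell_2)$ for two lines through $p$. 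None of this is supplied in your sketch. Your closing argument for global finiteness (compactness plus ruling out constant limits) is workable but unnecessary: since each homotopy class is open in $C(M_1,M_2)$ and $\Proj(M_1,M_2)$ is compact, only finitely many classes occur, and uniqueness within each class finishes the count.
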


 Benoist~\cite{B2004} has shown that the fundamental group of a compact strictly convex real projective manifold is Gromov hyperbolic and for such structures there is a natural geodesic flow which is Anosov. So, in some sense, strictly convex real projective manifolds can be thought of as being negatively curved. Thus Theorem~\ref{thm:main} can be seen as a real projective analogue of the finiteness of isometries of a negatively curved compact manifold~\cite{B1946} or the uniqueness of harmonic maps in a homotopy class when the target manifold is compact and negatively curved~\cite{H1967}.
 
The two non-equivalent real projective structures on $\Tb^d$, the $d$-torus, show that both properness and strict convexity are necessary in Theorem~\ref{thm:main}.  First, we can identify $\Tb^d$ with $\Gamma_1 \backslash \Omega_1$ where
\begin{align*}
 \Omega_1 := \{ [1: x_1 : \dots : x_d ] : x_1, \dots, x_d \in \Rb\}
 \end{align*}
and
 \begin{align*}
\Gamma_1 : = \left\{ \begin{pmatrix} \Id_d & z \\ 0 & 1 \end{pmatrix} : z \in \Zb^d\right\}.
 \end{align*}
This structure is not proper and the real projective automorphism group of $M_1$ coincides with $\SL_d(\Zb)$, a non-compact group. We can also identify $\Tb^d$  with $\Gamma_2 \backslash \Omega_2$ where
  \begin{align*}
 \Omega_2 := \{ [1: x_1 : \dots : x_d ] : x_i > 0 \text{ for } 1 \leq i \leq d\}
 \end{align*}
and
 \begin{align*}
\Gamma_2 : = \left\{ \begin{pmatrix} e^{z_1} & & \\ & \ddots & \\ & & e^{z_{d+1}} \end{pmatrix} : z_1, \dots, z_{d+1} \in \Zb\right\}.
 \end{align*}
This structure is proper but not strictly convex. Here the real projective automorphism group coincides with $\operatorname{Sym}(d) \ltimes \Tb^d$ where $\operatorname{Sym}(d)$ is the symmetric group on $d$ symbols. In particular, the automorphism group is compact but not discrete. 
  
These two examples reflect the general theory: properness will always imply the set of projective maps is compact while strict convexity will imply discreteness. 

Every proper convex real projective manifold has a complete metric called the \emph{Hilbert metric} (defined in Section~\ref{sec:metric}) and Kobayashi~\cite{K1977} proved that every projective map is distance decreasing with respect to the Hilbert metrics. The distance decreasing property immediately implies that the set of projective maps between two compact proper convex real projective manifolds is pre-compact. A simple argument will actually show that it is compact.

 \begin{proposition}\label{prop:compact}
Suppose $M_1$ and $M_2$ are compact proper convex real projective manifolds. Then $\Proj(M_1,M_2)$ is compact. In particular, only finitely many homotopy classes can be represented by a  real projective map. 
\end{proposition}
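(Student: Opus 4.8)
The plan is to exhibit $\Proj(M_1,M_2)$ as a closed, relatively compact subset of $C(M_1,M_2)$ with the uniform topology, which agrees with the compact-open topology because $M_1$ is compact. Write $M_i = \Gamma_i\backslash\Omega_i$ with $\Omega_i \subset \Pb(\Rb^{d_i+1})$ proper convex and $\Gamma_i$ acting cocompactly, and let $d_i$ denote the Hilbert metric on $M_i$ (and on $\Omega_i$). Relative compactness is immediate from Arzel\`a--Ascoli: by Kobayashi's theorem every $f \in \Proj(M_1,M_2)$ is $1$-Lipschitz from $(M_1,d_1)$ to $(M_2,d_2)$, so the family is uniformly equicontinuous, and it is pointwise relatively compact because $M_2$ is compact.

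The real content is that $\Proj(M_1,M_2)$ is closed, i.e.\ that a uniform limit $f$ of projective maps $f_n$ is projective (we allow constant maps, as these lie in $\Proj(M_1,M_2)$ by convention). Since $\Omega_1$ is simply connected, each $f_n$ lifts to a map $\tilde f_n \colon \Omega_1 \to \Omega_2$ equivariant for a homomorphism $\rho_n \colon \Gamma_1 \to \Gamma_2$; being a lift of a projective map, $\tilde f_n$ is locally a restriction of a projective linear map, and since $\Omega_1$ is connected this forces $\tilde f_n = [A_n\,\cdot\,]$ for a single linear map $A_n$ (with $\ker A_n \cap \Omega_1 = \emptyset$). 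Fix a basepoint $x_0 \in \Omega_1$ and a compact set $K \subset \Omega_2$ surjecting onto $M_2$, normalize $A_n$, and choose the lift so that $\tilde f_n(x_0) \in K$. The Lipschitz bound confines $\tilde f_n(x)$ to a compact subset of $\Omega_2$ depending only on $d_1(x,x_0)$; hence, using that $\Gamma_1$ is finitely generated and that $\Gamma_2$ acts properly discontinuously and freely, I would pass to a subsequence along which $A_n \to A_\infty \neq 0$, the $\rho_n$ stabilize to some $\rho$, and (by Arzel\`a--Ascoli on $\Omega_1$) $\tilde f_n \to \tilde f$ locally uniformly, where $\tilde f$ is a continuous $\rho$-equivariant lift of $f$. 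On $\Omega_1 \setminus \Pb(\ker A_\infty)$ we then have $\tilde f = [A_\infty\,\cdot\,]$.

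The one delicate point --- and the reason properness and compactness of the target enter --- is to keep the kernel of the limit off of $\Omega_1$. If $\Pb(\ker A_\infty) \cap \Omega_1 = \emptyset$, then $\tilde f = [A_\infty\,\cdot\,]$ on all of $\Omega_1$, so $\tilde f$ is projective and descends to a projective $f$, and we are done. Otherwise pick $q_0 \in \Pb(\ker A_\infty) \cap \Omega_1$; for any $v \in \Omega_1$ off the hyperplane $\Pb(\ker A_\infty)$, convexity puts the segment $[q_0,v]$ inside $\Omega_1$, and since $A_\infty q_0 = 0$ one computes $[A_\infty(q_0 + t(v - q_0))] = [A_\infty v]$ for all $t \in (0,1]$; letting $t \to 0$ and using continuity of $\tilde f$ at $q_0$ forces $\tilde f(q_0) = [A_\infty v]$ for every such $v$. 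Hence $\operatorname{rank} A_\infty = 1$, so $\tilde f$ is constant on the dense open set $\Omega_1 \setminus \Pb(\ker A_\infty)$ and therefore on $\Omega_1$; equivariance together with freeness of the $\Gamma_2$-action then makes $\rho$ trivial, so $f$ is a constant map into $M_2$ --- again in $\Proj(M_1,M_2)$. This establishes closedness, hence compactness of $\Proj(M_1,M_2)$.

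For the final assertion: since $M_1$ is compact and $M_2$ is a manifold, two sufficiently $C^0$-close maps $M_1 \to M_2$ are homotopic, so the homotopy class is a locally constant function on the compact space $\Proj(M_1,M_2)$ and therefore takes only finitely many values.
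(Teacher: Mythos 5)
Your proof is correct and follows essentially the same route as the paper: the distance-decreasing property of the Hilbert metric plus Arzel\`a--Ascoli gives precompactness, and closedness is obtained by lifting to $\Omega_1$, writing each lift as a projective linear map, normalizing representatives, and passing to a convergent subsequence. The only point of divergence is the degenerate case: where the paper appeals to Lemma~\ref{lem:cont} (which assumes rank at least two) to conclude that the kernel of the limit misses $\Omega_1$, you instead show directly that if the kernel meets $\Omega_1$ then the limit has rank one and the limit map is constant, hence still projective --- a slightly more careful treatment, since the normalized limit can in principle drop to rank one even when every $S_n$ has higher rank.
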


As mentioned above, strict convexity is responsible for the set of projective maps being discrete. The idea is to lift two homotopic maps $f_1, f_2 : \Gamma_1 \backslash \Omega_1 \rightarrow \Gamma_2 \backslash \Omega_2$ to maps $\wt{f}_1, \wt{f}_2 : \Omega_1 \rightarrow \Omega_2$ and then study the induced maps of the boundaries. Using the strict convexity and the asymptotic geometry of the Hilbert metic we will show that the two maps agree on the boundary which will imply that they agree on the interior. 

In the case in which the target manifold is not strictly convex we have no guarantee of discreteness, but we can show that each connected component has a real projective structure. For a map $f_0:M_1 \rightarrow M_2$ between two real projective manifolds define the set 
\begin{align*}
[f_0]:=\left\{ f \in \Proj(M_1,M_2) : f \sim f_0\right\}.
\end{align*}

\begin{theorem}\label{thm:main_structure}
Given a non-constant projective map $f_0: M_1 \rightarrow M_2$ between two compact proper convex real projective manifolds the set $[f_0]$ is either $\{f_0\}$ or has the structure of a compact proper convex real projective manifold which is compatible with the compact-open topology. Moreover, 
with this structure for any fixed $m \in M_1$ the map 
\begin{align*}
f \in [f_0] \rightarrow f(m) \in M_2
\end{align*}
 is projective.
\end{theorem}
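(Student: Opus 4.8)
The plan is to build $[f_0]$ explicitly as a quotient $\Gamma'\backslash\Omega'$, where $\Omega'$ is a proper convex domain sitting inside a space of linear maps and $\Gamma'$ is a subgroup of $\Gamma_2$. First set up notation: write $M_i=\Gamma_i\backslash\Omega_i$ with $\Omega_i=\Pb(C_i)$ for a sharp open convex cone $C_i\subseteq V_i:=\Rb^{d_i+1}$, and for each $\gamma$ let $A_\gamma\in\SL^{\pm}(V_i)$ be the representative of $\gamma$ with $A_\gamma(C_i)=C_i$; since comparing two cone-preserving linear maps forces the scalar relating them to be positive, $\gamma\mapsto A_\gamma$ is an honest homomorphism. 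Because $M_2$ is aspherical, maps that are close in the compact-open topology are homotopic, so every homotopy class is open in $\Proj(M_1,M_2)$, hence (by Proposition~\ref{prop:compact}) closed and compact; in particular $[f_0]$ is compact. Since $\Omega_1$ is convex and simply connected, every $f\in[f_0]$ lifts to a projective map $\widetilde f\colon\Omega_1\to\Omega_2$ which is the restriction of $[L_f]$ for a linear $L_f\colon V_1\to V_2$ with $L_f(C_1)\subseteq C_2$; as $f$ is non-constant, $L_f$ has rank at least $2$, so the equivariance $\widetilde f\gamma=\rho_f(\gamma)\widetilde f$ lifts to an identity $L_fA_\gamma=\mu_f(\gamma)A_{\rho_f(\gamma)}L_f$ with $\mu_f\colon\Gamma_1\to\Rb_{>0}$ a homomorphism, and translating $\widetilde f$ by an element of $\Gamma_2$ we may assume $\rho_f=\rho:=\rho_{f_0}$.

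The first substantive point is that $\mu_f$ is a homotopy invariant. If $\gamma\in\Gamma_1$ is biproximal with attracting fixed vector $v_\gamma^+$ and $L_fv_\gamma^+\neq0$, then $\widetilde f=[L_f]$, being projective, extends continuously to $v_\gamma^+$ and carries the attracting fixed point of $\gamma$ on $\partial\Omega_1$ to the attracting fixed point of $\rho(\gamma)$ on $\partial\Omega_2$; comparing eigenvalues in $L_fA_\gamma=\mu_f(\gamma)A_{\rho(\gamma)}L_f$ then gives $\mu_f(\gamma)=\lambda_1(\gamma)/\lambda_1(\rho(\gamma))$. Since such $\gamma$ generate a finite-index subgroup of $\Gamma_1$ and $\rho_f$ was normalized to $\rho$, and $\Rb_{>0}$ is torsion-free, this forces $\mu_f=\mu_{f_0}=:\mu$. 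Now put
\[
W:=\{\,L\in\operatorname{Hom}(V_1,V_2):L A_\gamma=\mu(\gamma)A_{\rho(\gamma)}L\ \text{for all}\ \gamma\in\Gamma_1\,\},
\]
a linear subspace, and $\Cc:=\{\,L\in W\setminus\{0\}:L(C_1)\subseteq C_2\,\}$. Fixing a compact fundamental domain $D\subseteq\Omega_1$ for $\Gamma_1$, equivariance shows $L\in\Cc$ iff $[L](D)\subseteq\Omega_2$, and $\{L\in W\setminus\{0\}:[L](D)\subseteq\Omega_2\}$ is the disjoint union $\Cc\sqcup(-\Cc)$ of two open convex cones; thus $\Cc$ is open and sharp, so $\Omega':=\Pb(\Cc)$ is a proper convex open subset of $\Pb(W)$ whose points are exactly the $\rho$-equivariant projective maps $\Omega_1\to\Omega_2$, i.e.\ exactly the lifts with holonomy $\rho$ of the maps in $[f_0]$. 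If $\dim W=1$ this gives $[f_0]=\{f_0\}$; assume henceforth $\dim W\geq2$.

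Next let $\Gamma':=\{\gamma_2\in\Gamma_2:\gamma_2\rho(\gamma)\gamma_2^{-1}=\rho(\gamma)\ \text{for all}\ \gamma\in\Gamma_1\}$. Comparing cone-preserving maps again forces $A_{\gamma_2}A_{\rho(\gamma)}=A_{\rho(\gamma)}A_{\gamma_2}$ for $\gamma_2\in\Gamma'$, so $L\mapsto A_{\gamma_2}L$ carries $W$ to $W$ and $\Cc$ to $\Cc$, and $\Gamma'$ acts on $\Omega'$ by projective transformations. This action is free (if $A_{\gamma_2}L=cL$ then $\gamma_2$ fixes $[L\widetilde m]\in\Omega_2$, where $\widetilde m\in C_1$ lifts a point of $M_1$, so $\gamma_2=e$) and properly discontinuous (evaluating at $\widetilde m$ sends a would-be bad sequence to one contradicting proper discontinuity of $\Gamma_2$ on $\Omega_2$). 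The map $q\colon\Gamma'\backslash\Omega'\to[f_0]$ sending $[L]$ to the map $M_1\to M_2$ induced by $[L]$ is well-defined, onto (every $f\in[f_0]$ has a $\rho$-equivariant lift), and injective (two $\rho$-equivariant lifts of one map differ by an element of $\Gamma'$); and it is a homeomorphism for the compact-open topology — continuity is uniform convergence on $D$, while for $q^{-1}$ one uses that if $f_n\to f$ then for large $n$ the lift of $f_n$ near a fixed lift of $f$ has holonomy $\rho$ (discreteness of $\Gamma_2$ makes the conjugating element eventually trivial) and that a point of $\Pb(W)$ is recovered continuously from the restriction to $D$ of its projective map. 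Hence $\Gamma'\backslash\Omega'\cong[f_0]$ is compact, $\Gamma'$ acts cocompactly, and $[f_0]$ acquires the asserted structure of a compact proper convex real projective manifold compatible with its topology. Finally, for $m\in M_1$ with lift $\widetilde m$, in these coordinates $f\mapsto f(m)$ is induced by the linear map $L\mapsto L\widetilde m$ from $W$ to $V_2$ and is $(\Gamma'\hookrightarrow\Gamma_2)$-equivariant, so it descends to a projective map $[f_0]\to M_2$ (which also re-proves properness, by pulling back the Hilbert structure of $M_2$).

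The step I expect to be the main obstacle is the homotopy invariance of the multiplier $\mu_f$: without it, the lifts of maps in $[f_0]$ would lie in different projective subspaces of $\Pb(\operatorname{Hom}(V_1,V_2))$ and $[f_0]$ would fail to be connected. The proximal-element argument above is the natural route but must be run carefully around the degenerate cases $L_fv_\gamma^+=0$ and $\rho(\gamma)=e$, which is where one has to control reducibility of the holonomy; the only other genuinely delicate point is the compatibility of $q$ with the compact-open topology, everything else being standard linear algebra and covering-space theory.
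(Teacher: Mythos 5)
Your construction is essentially the paper's: identify $[f_0]$ with the quotient of the set of $\rho$-equivariant lifts by the centralizer $C_{\Gamma_2}(\rho(\Gamma_1))$, pass to cones, and show that the cone of equivariant linear lifts is a sharp open convex cone whose projectivization carries the desired structure. Your convexity/openness argument via a compact fundamental domain is a clean substitute for the paper's Hilbert-metric estimate $\delta(p)\geq e^{-R}$, and you supply several details the paper leaves implicit (freeness and proper discontinuity of the $\Gamma'$-action, the identification with the compact-open topology); the evaluation map is handled identically.

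The genuine gap is exactly where you locate it: the homotopy invariance of the multiplier $\mu_f$. Without it the linear lifts of the various $f\in[f_0]$ do not lie in a single subspace $W$, your map $q$ is not onto, and the dichotomy ``$\dim W=1$ versus $\dim W\geq 2$'' does not decide whether $[f_0]=\{f_0\}$. Your proximal-element argument does not close this in the stated generality. First, for a compact \emph{proper} (not strictly) convex, possibly reducible $M_1$ you have not shown that biproximal $\gamma$ with $L_f v_\gamma^+\neq 0$ exist, let alone generate a finite-index subgroup. Second, and more seriously, the eigenvalue comparison $\mu_f(\gamma)=\lambda_1(\gamma)/\lambda_1(\rho(\gamma))$ requires $\rho(\gamma)$ itself to be proximal, so that ``the attracting fixed point of $\rho(\gamma)$ on $\partial\Omega_1$'' is defined and $[L_f v_\gamma^+]$ is forced to be it; in the non-strictly-convex setting this can fail outright (for the simplex quotient of $\Tb^d$ the holonomy consists of diagonal matrices with repeated eigenvalues). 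What the eigenvector argument gives unconditionally (when $L_fv_\gamma^\pm\neq 0$) is only that $\lambda_{\max}(\gamma)/\mu_f(\gamma)$ and $\lambda_{\min}(\gamma)/\mu_f(\gamma)$ are eigenvalues of $A_{\rho(\gamma)}$ with eigenvectors in $\overline{C_2}$, which bounds $\mu_f(\gamma)$ between two ratios of extremal eigenvalues but does not pin it down. To be fair, the paper does not prove this point either: it simply asserts that every $T\in\Proj(\Omega_1,\Omega_2)^{\rho}$ lifts to a $\tau$-equivariant linear map for the \emph{fixed} lift $\tau$ of $\rho$, which is precisely the claim at issue. So you have correctly isolated the one step of the argument that genuinely needs work, but your proposal, by its own admission, does not supply it, and the route you sketch would at best cover the strictly convex irreducible case.
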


As a corollary to the proof of Theorem~\ref{thm:main} we will establish:

\begin{corollary}\label{cor:center}
Given a non-constant projective map $f: M_1 \rightarrow M_2$ between two compact proper convex real projective manifolds either $[f]=\{f\}$ or $f_*(\pi_1(M_1,m))$ has non-trivial centralizer in $\pi_1(M_2, f(m))$.
\end{corollary}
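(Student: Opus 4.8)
The plan is to extract the statement from the set-up used to prove Theorem~\ref{thm:main}. Write $M_i=\Gamma_i\backslash\Omega_i$ with $\Omega_i\subset\Pb(\Rb^{d_i+1})$, identify $\Gamma_i$ with $\pi_1(M_i)$ (deck transformations), fix $m\in M_1$, and suppose $[f]\neq\{f\}$; choose $g\in[f]$ with $g\neq f$. Lifting $f$ to $\tilde f\colon\Omega_1\to\Omega_2$ gives $\rho:=f_*\colon\Gamma_1\to\Gamma_2$ with $\tilde f\circ\gamma=\rho(\gamma)\circ\tilde f$. Because $g$ is freely homotopic to $f$ and $M_2=\Gamma_2\backslash\Omega_2$ is aspherical, after composing a lift of $g$ with a suitable deck transformation we may assume $g_*=\rho$; lifting a homotopy from $f$ to $g$ then produces a lift $\tilde g$ of $g$ with $\tilde g\circ\gamma=\rho(\gamma)\circ\tilde g$. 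Thus $\tilde f$ and $\tilde g$ are $\rho$-equivariant projective maps $\Omega_1\to\Omega_2$ which are distinct, since $g\neq f$ forces $\tilde g\neq\eta\circ\tilde f$ for every $\eta\in\Gamma_2$.

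The proof of Theorem~\ref{thm:main} describes $[f]$ as follows: once $\rho$ is fixed, every map in $[f]$ is represented by a lift which is the restriction to $\Omega_1$ of a linear map $\Rb^{d_1+1}\to\Rb^{d_2+1}$ intertwining suitable lifts of $\Gamma_1$ and of $\rho(\Gamma_1)$; modulo scaling these form a proper convex open set $\Omega_0$ in a projective space, the centralizer $Z:=Z_{\Gamma_2}(\rho(\Gamma_1))$ acts on $\Omega_0$ properly discontinuously and freely by post-composition, and two such linear maps represent the same map $M_1\to M_2$ exactly when they lie in one $Z$-orbit (here one uses that distinct lifts of one map differ by a deck transformation which, by comparing the two equivariance relations, must centralize $\rho(\Gamma_1)$). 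Hence $[f]=Z\backslash\Omega_0$ and $\pi_1([f])=Z$. The maps $\tilde f$ and $\tilde g$ yield two distinct points of $\Omega_0$, so $\Omega_0$---being convex, hence connected---contains more than one point and therefore has positive dimension; consequently $\Omega_0$ is a proper convex open subset of a projective space of dimension $\ge1$, hence non-compact.

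It remains to conclude that $Z$ is infinite. By Proposition~\ref{prop:compact} the space $\Proj(M_1,M_2)$ is compact, and $[f]$ is clopen in it (nearby maps are homotopic, and the complement of $[f]$ is a union of the other homotopy classes), so $[f]=Z\backslash\Omega_0$ is compact while $\Omega_0$ is not; since $Z$ acts properly discontinuously on $\Omega_0$, a finite $Z$ would make $\Omega_0\to Z\backslash\Omega_0$ proper and $\Omega_0$ compact, a contradiction. Therefore $Z=Z_{\pi_1(M_2,f(m))}(f_*(\pi_1(M_1,m)))$ is infinite, in particular non-trivial, which proves the corollary. The step I expect to require the most care---though it is already handled inside the proof of Theorem~\ref{thm:main}---is the degenerate case in which a lift $\tilde f$ has image contained in a proper projective subspace $L$: then $\rho(\Gamma_1)$ preserves $L$, one must run the argument inside the lower-dimensional proper convex set $\Omega_2\cap L$, and one must check that the centralizing element obtained there really comes from an element of $\Gamma_2$ and not merely from the projective stabilizer of $L$; this, together with normalizing the scalar indeterminacies in the lifts $\Gamma_i\to\GL_{d_i+1}(\Rb)$, is part of that bookkeeping.
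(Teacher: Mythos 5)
Your argument is correct and follows the paper's own route: identify $[f]$ with $C_{\Gamma_2}(\rho(\Gamma_1))\backslash\Proj(\Omega_1,\Omega_2)^{\rho}$, show the latter space is a proper convex open set of positive dimension (hence non-compact) once it has two points, and conclude from compactness of $[f]$ (Proposition~\ref{prop:compact}) that the centralizer is infinite. Your closing worry about images lying in a proper projective subspace of $\Pb(\Rb^{d_2+1})$ is not actually an issue here, since the relevant convexity statement is proved for the set of equivariant linear maps inside the smallest linear subspace of $\Lin(\Rb^{d_1+1},\Rb^{d_2+1})$ containing it.
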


Given a linear map $T: \Rb^{d_1} \rightarrow \Rb^{d_2}$ one can always find a surjective linear map $S:\Rb^{d_1} \rightarrow \Rb^r$ and an injective linear map $I:\Rb^{r} \rightarrow \Rb^{d_2}$ so that $T = I \circ S$. As the next result shows an analogous result holds for projective maps and the surjective map can be chosen to depend only on the homotopy class. 

\begin{proposition}\label{prop:factor}
Suppose that $f_0: M_1 \rightarrow M_2$ is a non-constant projective map between two compact proper convex real projective manifolds. Then there exists a compact proper convex real projective manifold $N$ and a surjective projective map $p:M_1 \rightarrow N$ with the following property: if $f \in [f_0]$ then there exists a locally injective projective map $\overline{f}:N \rightarrow M_2$ with $f = \overline{f} \circ p$.
\end{proposition}

Given $f_0: M_1 \rightarrow M_2$ a non-constant projective map between two compact proper convex real projective manifolds there are two obvious ways for $[f_0]$ to be infinite. First the projective automorphism group of $M_1$ could be infinite and then $\{ f_0 \circ \phi_t : t \in \Rb\} \subset [f_0]$ for any one-parameter subgroup of $\Aut(M_1)$. Second $f_0$ could factor through a product $M_1 \rightarrow M_1 \times L \rightarrow M_2$. Then if the map $M_1 \times L \rightarrow M_2$ is projective on each fiber we again have infinitely many maps in $[f_0]$. The next theorem shows that these are essentially the only two possibilities:

\begin{theorem}\label{thm:prod_or_aut}
With the notation in Proposition~\ref{prop:factor}, $[f_0] \neq \{f_0\}$ if and only if either 
\begin{enumerate}
\item $\Aut(N)$ is infinite 
\item there exists a compact proper convex real projective manifold $L$ and a continuous locally injective map $F:N \times L \rightarrow M_2$ such that 
\begin{enumerate}
\item for any fixed $\ell \in L$ the map $n \in N \rightarrow F(n,\ell)$ is projective,
\item for any fixed $n \in N$ the map $\ell \in L \rightarrow F(n,\ell)$ is projective, and
\item there exists $\ell_0 \in L$ such that $f_0(m) = F(p(m),\ell_0)$ for all $m \in M_1$.
\end{enumerate}
\end{enumerate}
\end{theorem}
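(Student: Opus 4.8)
I will treat the two implications separately.

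For the reverse implication, suppose first that $\Aut(N)$ is infinite. Every element of $\Aut(N)$ is an isometry of the (complete) Hilbert metric of the compact manifold $N$, so $\Aut(N)$ is a compact group, hence a positive-dimensional compact Lie group; let $(\psi_t)_{t\in\Rb}$ be a nontrivial one-parameter subgroup. Each $\psi_t$ is homotopic to $\Id_N$, so $\bar f_0\circ\psi_t\circ p$ is homotopic to $\bar f_0\circ p=f_0$ and therefore lies in $[f_0]$, and the path $t\mapsto \bar f_0\circ\psi_t\circ p$ is non-constant: if $\bar f_0\circ\psi_t\circ p=\bar f_0\circ\psi_s\circ p$ then $\bar f_0\circ\psi_{t-s}=\bar f_0$ because $p$ is surjective, and since $\bar f_0$ is locally injective this forces $\psi_{t-s}=\Id_N$ (first for $|t-s|$ small, as then $\psi_{t-s}(n)$ lies in a neighbourhood of $n$ on which $\bar f_0$ is injective, and then for all $t,s$). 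Hence $[f_0]\neq\{f_0\}$. Suppose instead the data $(L,F,\ell_0)$ of (2) is given, with $\dim L\geq 1$ (if $L$ is a point the hypothesis is vacuous). For $\ell\in L$ the map $f_\ell\colon m\mapsto F(p(m),\ell)$ is a composition of the projective maps $p$ and $F(\cdot,\ell)$, hence projective, and is homotopic to $f_0$ along any path in $L$ from $\ell_0$ to $\ell$, so $f_\ell\in[f_0]$. Covering the compact manifold $N$ by finitely many of the neighbourhoods on which $F$ is injective and intersecting the corresponding neighbourhoods of $\ell_0$, one gets a neighbourhood of $\ell_0$ in $L$ on which $\ell\mapsto f_\ell$ is injective; as $\dim L\geq 1$ this neighbourhood is infinite, so $[f_0]$ is infinite.

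For the forward implication, assume $[f_0]\neq\{f_0\}$. By Theorem~\ref{thm:main_structure} the set $[f_0]$ is then a positive-dimensional compact proper convex real projective manifold on which, for each fixed $m\in M_1$, the evaluation $f\mapsto f(m)$ is projective. Since $p$ is surjective, every $f\in[f_0]$ has a unique factorization $f=\bar f\circ p$ with $\bar f\colon N\to M_2$ locally injective (hence, being projective, a local diffeomorphism onto its image) and projective, and $f\mapsto\bar f$ is a topological embedding of $[f_0]$ into $\Proj(N,M_2)$: it is continuous because every compact subset of $N$ is $p(K)$ for some compact $K\subseteq M_1$, and a homeomorphism onto its image by compactness. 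Write $V$ for the image, a compact proper convex real projective manifold which for each $n\in N$ carries the projective evaluation $\mathrm{ev}_n\colon\bar f\mapsto\bar f(n)$ (the evaluation of Theorem~\ref{thm:main_structure} at any $m$ with $p(m)=n$, transported through the embedding). Consequently the joint evaluation $E\colon N\times V\to M_2$, $E(n,\bar f)=\bar f(n)$, is continuous and projective in each variable separately, and it satisfies (c) with $\ell_0=\bar f_0$ because $E(p(m),\bar f_0)=\bar f_0(p(m))=f_0(m)$.

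Now assume in addition that $\Aut(N)$ is finite; it remains to cut $V$ down to a compact projective submanifold $L\ni\ell_0$ on which $F:=E|_{N\times L}$ is locally injective, which gives (2). The mechanism is the following dichotomy. If $\bar f,\bar g\in V$ are close to one another and the immersed submanifolds $\bar f(N)$ and $\bar g(N)$ agree on a nonempty open set, then on that set $\bar g^{-1}\circ\bar f$ — formed using the local inverse of the local diffeomorphism $\bar g$ — is a germ of a locally injective projective self-map of $N$ close to the identity; lifting $\bar\rho$-equivariantly to the convex domain covering $N$ and using convexity, this germ extends to a projective automorphism of $N$ near $\Id_N$, which must equal $\Id_N$ since $\Aut(N)$ is finite, whence $\bar f=\bar g$. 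Thus distinct maps in $V$ have nowhere-locally-coinciding images. Using that $V$ is positive-dimensional and the maps $\mathrm{ev}_n$ are projective — so that the images $\{\bar f(N)\}_{\bar f\in V}$ sweep out a neighbourhood of $\bar f_0(N)$ in $M_2$ with positive-dimensional, non-redundant transverse variation — I would then produce a compact proper convex real projective submanifold $L$ of $V$ through $\ell_0$ along which the images $\bar f(N)$, $\bar f\in L$, are pairwise disjoint. For such an $L$ the map $F=E|_{N\times L}$ is locally injective: an equality $E(n,\bar f)=E(n',\bar g)$ with $\bar f\neq\bar g$ contradicts disjointness of the images, while with $\bar f=\bar g$ it forces $n=n'$ by local injectivity of $\bar f$; properties (a), (b) are inherited from $E$, and (c) was already checked.

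The step I expect to be the main obstacle is precisely the last one: upgrading ``distinct maps in $V$ have nowhere-locally-coinciding images'' to genuine disjointness of the images over a \emph{compact} projective parameter space $L$. This is where the structure theory of compact proper convex real projective manifolds must be used in earnest. The natural route is Benoist's decomposition of $M_2$ (up to finite cover) into indecomposable convex factors: the freedom inside $V$ not accounted for by $\Aut(N)$ should be exactly the freedom to translate $\bar f_0(N)$ along a sub-product of indecomposable factors of $M_2$, and taking $L$ to be that sub-product and $F$ the associated product map makes disjointness, local injectivity, and projectivity in the $L$-variable transparent — but matching up the factors of $N$ and $M_2$ and checking compatibility with the holonomy is the technical heart of the argument.
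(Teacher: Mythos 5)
Your reverse implication is fine (the paper does not even spell it out), but the forward implication has a genuine gap, and it is exactly the one you flag yourself: you never establish local injectivity of $F$. Your plan rests on (i) the claim that two nearby maps in $V$ whose images overlap on an open set differ by an element of $\Aut(N)$ close to the identity, hence coincide, and (ii) the production of a compact projective $L\subset V$ through $\ell_0$ over which the images $\overline{f}(N)$ are pairwise disjoint. Step (i) is only sketched --- extending a germ of a projective self-map of $N$ to a global automorphism requires equivariance and that the extension preserve $\Omega_W$ and normalize the holonomy, and ``close to the identity hence equal to it'' needs a discreteness statement you have not supplied --- and step (ii) is explicitly left open, with an appeal to a Benoist-type decomposition of $M_2$ that is nowhere carried out. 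Since (ii) is the entire content of the hard direction, the proof is incomplete. (Your dichotomy on whether $\Aut(N)$ is finite is also not the one that makes the argument close.)

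The paper closes this gap by running the dichotomy on whether the induced group $\Lambda_W=\pi_W(\Lambda_1)$ acts irreducibly on $W$. If it acts reducibly, Theorem~\ref{thm:irred} splits $\Cc_W=\Cc_{W,1}+\Cc_{W,2}$ and the block-scalar group $\left\{\operatorname{diag}(a\Id_{W_1},b\Id_{W_2})\right\}$ centralizes $\Lambda_W$, so $\Aut(N)$ is infinite outright. If it acts irreducibly, one takes $L=[\overline{f}_0]$ in its entirety and proves the lifted evaluation $\wt{F}(p,T)=T(p)$ is injective: if $T_1(p_1)=T_2(p_2)$, then $\rho$-equivariance together with Lemma~\ref{lem:orb_ext} forces $\ip{T_1(\Omega_1)}=\ip{T_2(\Omega_1)}$; the resulting self-map $\Phi=\overline{T}_1^{-1}\circ\overline{T}_2$ of $\Omega_W$ commutes with $\Gamma_W$ and stays a bounded Hilbert distance from the identity, so by Lemma~\ref{lem:bd_behavior} it fixes every extreme point of $\Omega_W$, is therefore diagonal in a basis of extreme points, and irreducibility of the $\Gamma_W$-action forces all eigenvalues to coincide, i.e.\ $\Phi=[\Id_W]$ and $T_1=T_2$. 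In particular no shrinking of $L$ is needed; the ``distinct maps have disjoint images'' statement you are after is precisely the contrapositive of this rigidity, but it comes from irreducibility plus the extreme-point argument, not from a decomposition of $M_2$. You should replace your step (ii) by this argument (note that when $\Aut(N)$ is finite the reducible case cannot occur, so irreducibility is available in your setting).
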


\begin{remark} \
\begin{enumerate}
\item In general a product of real projective manifolds does not have a real projective structure and so $F$ being projective on each fiber is the best one can expect. 
\item If $[f_0] \neq \{f_0\}$ then we know from Corollary~\ref{cor:center} that $(f_0)_*(\pi_1(M_1,m))$ has non-trivial centralizer $C$ in $\pi_1(M_2, f_0(m))$. The first case happens when 
\begin{align*}
C \cap (f_0)_*(\pi_1(M_1,m)) \neq 1
\end{align*}
while the second case happens when 
\begin{align*}
C \cap (f_0)_*(\pi_1(M_1,m)) = 1.
\end{align*}
\item As we will see, the manifold $L$ coincides with $[\overline{f}_0]$ and the map $F$ is simply $F(n,\ell) = \ell(n)$. 
\end{enumerate}
\end{remark}

\subsection{A characterization of proper convex real projective manifolds}

The starting point in this study is Proposition~\ref{prop:compact} which uses the distance decreasing property of the Hilbert metric. The Hilbert metric is usually only defined for convex real projective manifolds (see Section~\ref{sec:metric}), but in fact every real projective manifold has a natural pseudo-metric. In particular, Kobayashi~\cite{K1977} introduced the following Finsler pseudo-metric on a real projective manifold
\begin{align*}
h^{\kob}_M(p; v) = \inf \{ \abs{\xi} : f:(-1,1) \rightarrow M \text{ is projective}, f(0)=p, df_0(\xi) = v\}
\end{align*}
and the integrated pseudo-distance
\begin{align*}
H^{\kob}_M(p,q) = \inf \left\{ \int_0^1 h^{\kob}_M(\sigma(t); \sigma^\prime(t)) dt : \sigma : [0,1] \rightarrow M \text{ is } C^{\infty}, \sigma(0)=p, \sigma(1)=q\right\}.
\end{align*}
By definition these metrics are distance decreasing with respect to projective maps. Moreover, Kobayashi proved that for convex real projective manifolds this metric coincides with the Hilbert metric. One could then hope to generalize the results of this paper to the real projective manifolds for which this pseudo-metric is non-degenerate. Unfortunately Kobayashi proved the following:

\begin{theorem}\cite{K1977}\label{thm:characterization}
Suppose $M$ is a compact real projective manifold. Then the following are equivalent: 
\begin{enumerate}
\item If $\Ab \subset \Pb(\Rb^2)$ is an affine chart then every real projective map $\Ab \rightarrow M$ is constant, 
\item $H_M^{\kob}$ is non-degenerate, 
\item $M$ is a compact proper convex real projective manifold. 
\end{enumerate}
\end{theorem}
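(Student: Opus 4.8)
The plan is to prove the cycle of implications $(3) \Rightarrow (2) \Rightarrow (1) \Rightarrow (3)$, with the genuinely new content concentrated in the last step.

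For $(3) \Rightarrow (2)$: if $M = \Gamma \backslash \Omega$ is a compact proper convex real projective manifold, then by Kobayashi's identification of $H^{\kob}_M$ with the Hilbert metric on such manifolds, $H^{\kob}_M$ is a genuine complete metric, hence non-degenerate. (One may instead argue directly: since $\Omega$ is a proper convex domain, every projective map $(-1,1) \to \Omega$ is a map into a proper convex subset of a line, and the Hilbert length bounds the Kobayashi seminorm from below, so $h^{\kob}$ is bounded below by a Finsler metric on a compact manifold and $H^{\kob}_M$ is non-degenerate.)

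For $(2) \Rightarrow (1)$: this is essentially formal. Suppose $g : \Ab \to M$ is a non-constant real projective map, where $\Ab \subset \Pb(\Rb^2)$ is an affine chart, i.e.\ (after a projective change of coordinates) $\Ab = \Rb$. Precomposing $g$ with the rescaling maps $t \mapsto g(R t)$ for $R \to \infty$, and using that each such precomposition is again projective with $|Rt| < 1$ forcing $|t| < 1/R$, one sees that $h^{\kob}_M(g(0); dg_0(1)) \leq |{\xi}|$ for arbitrarily small $\xi$; hence $h^{\kob}_M$ vanishes in the direction $dg_0(1)$. Since $g$ is non-constant this is a nonzero tangent vector at $g(0)$, so $H^{\kob}_M$ is degenerate, contradicting $(2)$.

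For $(1) \Rightarrow (3)$, which I expect to be the main obstacle: the hypothesis says $M$ admits no non-constant projective map from an affine line. Lifting to the universal cover, one works with the developing map $\operatorname{dev} : \wt{M} \to \Pb(\Rb^{d+1})$. The strategy is to show first that $\operatorname{dev}$ is injective with convex image. Non-injectivity or non-convexity of the image would, via compactness of $M$ and a limiting/rescaling argument on projective lines in $\wt M$, produce a projective line segment in $\wt M$ that extends to a full projectively-embedded affine line (or an embedded $\Pb^1$, which contains affine lines), contradicting $(1)$: concretely, if $\operatorname{dev}$ fails to be a convex embedding one can find a projective geodesic in $M$ that, when developed, either closes up to a projective circle or wraps around so that its lift contains an entire affine line's worth of projective structure. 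Once $\operatorname{dev}$ is a homeomorphism onto a convex open $\Omega \subset \Pb(\Rb^{d+1})$, one must rule out $\Omega$ containing a full affine line in its closure in a way that would again violate $(1)$; properness of $\Omega$ follows because a non-proper convex open set contains an affine line, which would develop from a projective line in $\wt M$ descending (by compactness and recurrence) to give a non-constant projective map $\Ab \to M$. This last step — upgrading "no projective lines from $\Ab$" to "the developing map is a proper convex embedding" — is the delicate part; it is exactly Kobayashi's theorem and I would cite~\cite{K1977} for the full argument rather than reproduce it, since the reductions above indicate the shape of the proof but the convexity bootstrap requires care with the structure of convex sets in $\Pb(\Rb^{d+1})$ and with ensuring the limiting projective maps are non-constant.
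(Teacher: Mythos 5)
The paper does not actually prove Theorem~\ref{thm:characterization}; it is stated with a citation to \cite{K1977} and used as a black box, so there is no internal proof to compare yours against. Measured as a self-contained proof, your proposal has one genuine gap, and it is exactly where you flag it: the implication $(1)\Rightarrow(3)$. What you write there is a strategy statement, not an argument --- ``non-injectivity or non-convexity of the image would, via a limiting/rescaling argument, produce an affine line'' is the conclusion one wants, but the mechanism is never supplied. The hard content is to show that absence of projective maps from $\Ab$ forces $\operatorname{dev}$ to be injective with proper convex image; this requires the Koszul--Vey style convexity bootstrap (local convexity of the developed image of geodesically convex sets, completeness of the pulled-back structure, and control of limits of developed chords), and nothing in your sketch substitutes for it. Since you ultimately defer to \cite{K1977} for precisely this step, your treatment is in effect the same as the paper's: cite Kobayashi. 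That is acceptable as a reading of the theorem but is not a proof of it.

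The two implications you do argue are essentially right, with one small repair needed in $(2)\Rightarrow(1)$. Your rescaling argument shows only that $h^{\kob}_M\bigl(g(0); dg_0(1)\bigr)=0$, i.e.\ the infinitesimal pseudo-metric vanishes in one direction at one point; that alone does not make the integrated pseudo-distance $H^{\kob}_M$ degenerate. You need the same rescaling at every $t_0\in\Ab$ (which works verbatim, since $g$ is defined on the whole affine line), so that $h^{\kob}_M(g(t);g'(t))\equiv 0$ and hence $H^{\kob}_M(g(0),g(s))=0$ for some $s$ with $g(s)\neq g(0)$. Equivalently, and more cleanly, just note that $H^{\kob}_{\Ab}\equiv 0$ and that $H^{\kob}$ is distance-decreasing under the projective map $g$. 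For $(3)\Rightarrow(2)$ your parenthetical direct argument is the better of your two options, since it avoids invoking yet another theorem of Kobayashi (the identification of $H^{\kob}_M$ with the Hilbert metric): a projective map $(-1,1)\to M$ lifts to $\Omega$, lands in a chord of the proper convex set $\Omega$, and the Hilbert--Finsler norm bounds $h^{\kob}$ from below, which descends to a non-degenerate lower bound on the compact quotient.
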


\subsection{Complex Manifolds} Some of the results above are motivated by the theory of holomorphic maps between complex manifolds. In particular, any complex manifold has a possibly degenerate metric called the \emph{Kobayashi metric}. The Kobayashi metric has the remarkable property that any holomorphic map $f:M_1 \rightarrow M_2$ is distance decreasing. In some cases, for instance complex projective space, the Kobayashi metric is trivial but when it is an actual metric one can use this distance decreasing property to establish compactness of holomorphic maps.  Moreover for compact manifolds, there is a simple (to state) characterization due to Brody of when the metric is non-degenerate.

\begin{theorem}\cite[Theorem 4.1]{B1978} Suppose $M$ is a compact complex manifold. Then the Kobayashi metric is non-degenerate if and only if every holomorphic map $\Cb \rightarrow M$ is constant. 
\end{theorem}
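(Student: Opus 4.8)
This is Brody's theorem, and the plan is to treat the two implications separately: the forward one is elementary, and the reverse one rests on the Brody reparametrization lemma together with a normal families argument. Write $d_M$ for the Kobayashi pseudo-distance on $M$, so that ``non-degenerate'' means $d_M$ separates points.

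For the forward direction, suppose $d_M$ is non-degenerate and let $f : \Cb \rightarrow M$ be holomorphic. The Kobayashi pseudo-distance on $\Cb$ vanishes identically: for $w \in \Cb$ and $\epsilon \in (0,1)$ the holomorphic map $\Db \rightarrow \Cb$ given by $z \mapsto (w/\epsilon)z$ sends $0$ to $0$ and $\epsilon$ to $w$, so $d_\Cb(0,w) \le d_\Db(0,\epsilon) \rightarrow 0$ as $\epsilon \rightarrow 0$, and translation invariance gives $d_\Cb \equiv 0$. Since holomorphic maps are distance decreasing for the Kobayashi pseudo-distance, $d_M(f(z),f(w)) \le d_\Cb(z,w) = 0$ for all $z,w \in \Cb$, and non-degeneracy of $d_M$ forces $f$ to be constant.

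For the reverse direction I argue by contraposition: assuming $d_M$ is degenerate, say $d_M(p,q)=0$ with $p \neq q$, I would construct a non-constant holomorphic map $\Cb \rightarrow M$. Fix a Hermitian metric on $M$ with associated norm $\norm{\cdot}$ on tangent vectors. First I would produce holomorphic disks with unbounded derivative: for each $n$ choose a Kobayashi chain of holomorphic disks $\Db \rightarrow M$ joining $p$ to $q$ of total Poincar\'e length less than $1/n$; concatenating the chain gives a path in $M$ from $p$ to $q$, whose Riemannian length is therefore at least the positive Riemannian distance from $p$ to $q$, and comparing this with the bound coming from the total Poincar\'e length shows that some disk in some chain has derivative of norm at least $cn$ at a point within Poincar\'e distance $1/n$ of its centre. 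Recentring that disk by an automorphism of $\Db$ yields holomorphic maps $\psi_n : \Db \rightarrow M$ with $\norm{\psi_n'(0)} \rightarrow \infty$. Now comes the Brody reparametrization lemma. On $\overline{\Db}$ consider the continuous function $t \mapsto (1-\abs{t}^2)\norm{\psi_n'(t)}$, which vanishes on the boundary circle; let $t_n \in \Db$ be an interior maximum point and $c_n$ the maximum value, so $c_n \ge \norm{\psi_n'(0)}$, which tends to $\infty$. Let $\sigma_n : \Db_{c_n} \rightarrow \Db$ be the biholomorphism of the disk of radius $c_n$ onto $\Db$ with $\sigma_n(0)=t_n$ and $\sigma_n'(0)>0$, and set $g_n := \psi_n \circ \sigma_n : \Db_{c_n} \rightarrow M$. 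A short computation using the Schwarz--Pick lemma and the maximality defining $c_n$ gives $\norm{g_n'(0)}=1$ and $\norm{g_n'(z)} \le (1-\abs{z}^2/c_n^2)^{-1}$ for every $z \in \Db_{c_n}$.

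Finally I would run a normal families argument. Since $c_n \rightarrow \infty$ the disks $\Db_{c_n}$ exhaust $\Cb$, and on each fixed disk $\Db_\rho$ the estimate above gives $\norm{g_n'} \le (1-\rho^2/c_n^2)^{-1}$ once $c_n > \rho$, so on $\Db_\rho$ the maps $g_n$ are uniformly Lipschitz for the Riemannian distance on the compact manifold $M$. By the Arzel\`a--Ascoli theorem and a diagonal argument some subsequence converges uniformly on compact subsets of $\Cb$ to a continuous map $g : \Cb \rightarrow M$; since local uniform limits of holomorphic maps are holomorphic, $g$ is holomorphic, and since the derivatives also converge, $\norm{g'(0)} = \lim_n \norm{g_n'(0)} = 1 \neq 0$, so $g$ is non-constant, which completes the contrapositive. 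I expect the main obstacle to be the Brody reparametrization lemma itself, specifically the choice of the auxiliary function $(1-\abs{t}^2)\norm{\psi_n'(t)}$ and the verification of the derivative bound $\norm{g_n'(z)} \le (1-\abs{z}^2/c_n^2)^{-1}$; a secondary point needing care is the passage from degeneracy of $d_M$ to disks of unbounded derivative (which could alternatively be quoted from Royden's realization of $d_M$ as the integrated infinitesimal Kobayashi metric), and one should note that compactness of $M$ is essential precisely at the Arzel\`a--Ascoli step.
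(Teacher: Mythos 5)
The paper does not prove this statement; it is quoted directly from Brody~\cite{B1978} as motivation for the real projective results, so there is no in-paper argument to compare against. Your proposal is the standard (and correct) proof of Brody's theorem: the trivial direction via vanishing of $d_{\Cb}$ and distance decreasing, and the contrapositive of the hard direction via disks of unbounded derivative, the reparametrization lemma, and Arzel\`a--Ascoli. Two small points of care. First, for a map $\psi_n$ holomorphic only on $\Db$, the function $t \mapsto (1-\abs{t}^2)\norm{\psi_n'(t)}$ need not extend continuously to $\overline{\Db}$ nor vanish on the boundary; the standard fix is to replace $\psi_n$ by $t \mapsto \psi_n(rt)$ for $r<1$ close to $1$ (which preserves $\norm{\psi_n'(0)}$ up to the factor $r$) before taking the interior maximum, and your derivative estimate $\norm{g_n'(0)}=1$, $\norm{g_n'(z)} \le (1-\abs{z}^2/c_n^2)^{-1}$ then goes through verbatim via Schwarz--Pick. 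Second, in the passage from degeneracy of $d_M$ to disks of large derivative, the quantity that the length comparison actually controls is the Poincar\'e-normalized derivative $\sup_t (1-\abs{t}^2)\norm{\phi'(t)}$ along the geodesic segments of the chain --- this is the automorphism-invariant quantity, and it is exactly what survives your recentring step as $\norm{\psi_n'(0)}$ --- rather than ``a derivative of norm at least $cn$ at a point within Poincar\'e distance $1/n$ of the centre''; phrased that way the claim about the location of the point is neither needed nor quite what the estimate gives. With those adjustments the argument is complete, and your closing remarks correctly identify where compactness of $M$ enters.
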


So Theorem~\ref{thm:main} can be seen as an analogue of the following theorem in complex analysis:

\begin{theorem}\label{thm:complex_finite}\cite[pg. 70]{K1970}
Suppose $M$ is a compact complex manifold. If every holomorphic map $\Cb \rightarrow M$ is constant then the group of bi-hilomorphisms of $M$ is finite. 
\end{theorem}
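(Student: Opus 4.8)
The plan is to run the classical ``isometry group'' argument in the holomorphic category. First, since $M$ is compact and every holomorphic map $\Cb \rightarrow M$ is constant, Brody's theorem (\cite[Theorem 4.1]{B1978}, quoted above) shows the Kobayashi metric of $M$ is non-degenerate, so $M$ is Kobayashi hyperbolic; moreover, it is classical that on a hyperbolic manifold the Kobayashi distance $d_M$ is a genuine distance inducing the manifold topology. Since $d_M$ is distance decreasing under holomorphic maps, any biholomorphism $g$ satisfies $d_M(gx,gy) \le d_M(x,y)$ and, applying this to $g^{-1}$ as well, is in fact a $d_M$-isometry. Thus $\Aut(M)$, with the compact-open topology, embeds into $\Isom(M,d_M)$.

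Second, I would show $\Aut(M)$ is compact. Given a sequence $g_n \in \Aut(M)$, the $g_n$ are isometries of the compact metric space $(M,d_M)$, so by Arzel\`a--Ascoli a subsequence converges uniformly to a continuous $g:M\rightarrow M$; passing to a further subsequence, the inverses $g_n^{-1}$ converge uniformly to some $h:M\rightarrow M$. A locally uniform limit of holomorphic maps is holomorphic, so $g$ and $h$ are holomorphic, and taking limits in $g_n \circ g_n^{-1} = \Id = g_n^{-1}\circ g_n$ gives $g\circ h = h \circ g = \Id$. Hence $g \in \Aut(M)$ and $g_n \to g$ in the compact-open topology, so $\Aut(M)$ is sequentially compact, hence compact.

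Third, I would rule out positive dimension. By a theorem of Bochner and Montgomery, $\Aut(M)$ is a complex Lie group whose Lie algebra is the space of holomorphic vector fields on $M$, each of which is complete because $M$ is compact. If $\Aut(M)$ were infinite it would, as a compact Lie group, have positive dimension, so there would exist a nonzero holomorphic vector field $X$. Using that $[X,JX]=0$ for a holomorphic vector field and that $X$ and $JX$ are complete, the flows of $X$ and $JX$ combine into a holomorphic action of $(\Cb,+)$ on $M$; choosing $p$ with $X(p)\neq 0$, the orbit map $t \mapsto \exp(tX)\cdot p$ is then a non-constant holomorphic map $\Cb \rightarrow M$, contradicting the hypothesis. (Equivalently, the complex line $\Cb X$ inside the Lie algebra is a one-complex-parameter subgroup of the complex Lie group $\Aut(M)$, and composing it with evaluation at $p$ produces the entire curve.) Hence there are no nonzero holomorphic vector fields, $\Aut(M)$ is discrete, and being also compact it is finite.

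The step I expect to be the main obstacle is precisely this last one: passing from a nonzero holomorphic vector field to a non-constant entire curve $\Cb \rightarrow M$. One must know the \emph{global} statement that a holomorphic vector field on a compact complex manifold generates a genuine holomorphic $\Cb$-action (through $[X,JX]=0$ and completeness, or through the complex Lie group structure on $\Aut(M)$ furnished by Bochner--Montgomery), rather than merely the local existence of an integral curve; it is only this global assertion that contradicts Brody hyperbolicity.
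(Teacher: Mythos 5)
Your proposal is correct and follows essentially the same route the paper sketches for this cited result: Brody's theorem gives Kobayashi hyperbolicity, distance-decreasing makes $\Aut(M)$ a compact group of isometries, and Bochner--Montgomery plus a complex one-parameter subgroup $z \mapsto \exp(zX)\cdot m$ produces a non-constant entire curve if the group were infinite, contradicting hyperbolicity. The extra details you supply (Arzel\`a--Ascoli for compactness, completeness of $X$ and the global $\Cb$-action) are exactly the right points to flesh out and are handled correctly.
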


We should remark that the standard proof Theorem~\ref{thm:complex_finite} is different from the arguments in our paper. Bochner and Montgomery proved that the bi-holomorphism group of an compact complex manifold is a complex Lie group~\cite{BM1947}. In the case in which the Kobayashi metric is non-degenerate, it acts by isometries and thus will be  compact Lie group. Then the group is either finite or contains a complex one-parameter group $\{ \exp( z X) : z \in \Cb\}$. In the latter case, for a generic $m \in M$ the map $z \rightarrow \exp(zX) \cdot m$ is a non-constant holomorphic map of $\Cb \rightarrow M$ which is impossible when the Kobayashi metric is non-degenerate.

One might hope to simply repeat the proof of Theorem~\ref{thm:complex_finite} in the real projective setting. However, this will have little hope of succeeding because the projective automorphism group of $\Tb^d$ with its proper projective structure need not be finite. 


\subsection*{Acknowledgments} 

I would like to thank Benson Farb for many helpful conversations, in particular  explaining why the set of bi-holomorphisms between two compact hyperbolic surfaces is finite and asking whether a similar result holds for real projective manifolds. This material is based upon work supported by the National Science Foundation under Grant Number NSF 1045119 and Grant Number NSF 1400919.

\section{Notation} 

Given some object $o$ we will let $[o]$ be the projective equivalence class of $o$, for instance: if $v \in \Rb^{d+1} \setminus \{0\}$ let $[v]$ denote the image of $v$ in $\Pb(\Rb^{d+1})$, if $\phi \in \GL_{d+1}(\Rb)$ let $[\phi]$ denote the image of $\phi$ in $\PGL_{d+1}(\Rb)$, and if $T \in \Lin(\Rb^{d_1+1}, \Rb^{d_2+1}) \setminus\{0\}$ let $[T]$ denote the image of $T$ in $\Pb(\Lin(\Rb^{d_1+1}, \Rb^{d_2+1}))$. 

\section{Convex real projective manifolds}\label{sec:real_proj_geom}

 A \emph{real projective atlas} on a second countable Hausdorff space $M$ is a pair $(\Uc, \Phi)$ where $\Uc=\left\{ U_\alpha \right\}$ is an open covering of $M$ and
 $\Phi = \left\{ \phi_{\alpha}:U_\alpha \rightarrow \Pb(\Rb^{d+1}) \right\}$ is a collection of homeomorphisms onto open sets of $\Pb(\Rb^{d+1})$ such that $\phi_{\alpha} \circ \phi_{\beta}^{-1}$ is the restriction of some element of $\PGL_{d+1}(\Rb)$ to $\phi_{\beta}(U_{\alpha} \cap U_{\beta})$. Given an atlas, a pair $(U_{\alpha}, \phi_{\alpha})$ is called a \emph{chart} on $M$.
 
 \begin{definition}
 A \emph{real projective structure} on a second countable Hausdorff space $M$ is a maximal projective atlas $(\Uc,\Phi)$. A topological space $M$ equipped with a projective structure is called a \emph{projective manifold. }
 \end{definition}
 
If $f:M \rightarrow N$ is a local diffeomorphism and $N$ has a real projective structure, there is a unique real projective structure on $M$ making the map $f: M \rightarrow N$ a \emph{projective map} (see below).  So if  $M$ is a projective manifold, we can lift the projective structure to the universal cover $\wt{M}$ of $M$. Then fixing a base point $\wt{x}_0 \in \wt{M}$ over some $x_0 \in M$ we can use our charts to define a homomorphism $\operatorname{hol}: \pi_1(M,x_0) \rightarrow \PGL_{d+1}(\Rb)$ called the \emph{holonomy map} and a local diffeomorphism $\operatorname{dev}: \wt{M} \rightarrow \Pb(\Rb^{d+1})$ called the \emph{developing map}. In general these maps are only defined up to conjugation in $\PGL_{d+1}(\Rb)$ and the developing map will be equivariant with respect to the holonomy map. All these definitions come from the general theory of geometric structures on manifolds and details can be found in~\cite[Section 2]{G1988}.

A set $\Omega \subset \Pb(\Rb^{d+1})$ is \emph{convex} if $L \cap \Omega$ is connected and $L \cap \Omega \neq L$ for any projective line $L \subset \Pb(\Rb^{d+1})$. A convex set $\Omega$ is \emph{proper} if $\overline{L \cap \Omega} \neq L$ for any projective line $L \subset \Pb(\Rb^{d+1})$. Finally a convex set $\Omega$ is called \emph{strictly convex} if the intersection of $\partial \Omega$ with any projective line is at most two points. It is possible to show that $\Omega$ is a proper convex open set if and only if there exists an affine chart containing $\Omega$ as a bounded convex open set. 
 
 \begin{definition}
 A projective manifold $M$ is called \emph{convex} if the developing map $\operatorname{dev} : \wt{M} \rightarrow \Pb(\Rb^{d+1})$ is a diffeomorphism onto a convex open set $\Omega$ in $\Pb(\Rb^{d+1})$. A convex projective manifold is called \emph{proper} if $\Omega$ is a proper convex set. A convex projective manifold is called \emph{strictly convex} if $\Omega$ is a strictly convex set.
 \end{definition} 

By definition every convex real projective manifold $M$ can be identified with a quotient $\Gamma \backslash \Omega$ where 
\begin{align*}
\Omega:=\operatorname{dev}(\wt{M}) \subset \Pb(\Rb^{d+1})
\end{align*}
is a convex open set and 
\begin{align*}
\Gamma:=\operatorname{hol}(\pi_1(M,x_0)) \leq \PGL_{d+1}(\Rb)
\end{align*}
is a discrete group which acts properly discontinuously and freely on $\Omega$.
 
\subsection{Projective maps}\label{subsec:proj_maps} 

Given two real projective manifolds $M_1$ and $M_2$, we say a map $f: M_1 \rightarrow M_2$ is a \emph{projective map} if for each chart $\phi_{\alpha} : U_\alpha \rightarrow \Pb(\Rb^{d_1+1})$ on $M_1$ and chart $\phi_{\beta} : V_\beta \rightarrow \Pb(\Rb^{d_2+1})$ on $M_2$, the composition $\phi_{\beta} \circ f \circ \phi_{\alpha}^{-1}$ coincides on $\phi_{\alpha}(U_\alpha \cap f^{-1}(V_\beta))$ with some map of the form $x \rightarrow T(x)$ where $T \in \Pb(\Lin(\Rb^{d_1+1}, \Rb^{d_2+1}))$. 

As the next result shows when $M_1 = \Gamma_1 \backslash \Omega_1$ and $M_2 = \Gamma_2 \backslash \Omega_2$ are convex real projective manifold any projective map $f :M_1 \rightarrow M_2$ can be lifted to a projective map $T : \Omega_1 \rightarrow \Omega_2$. 
 
\begin{proposition}\label{prop:proj_map}
Suppose $M_1=\Gamma_1 \backslash \Omega_1$ and $M_2=\Gamma_2 \backslash \Omega_2$ are convex projective manifolds and $f:M_1 \rightarrow M_2$ is a non-constant projective map. Let $\wt{f}:\Omega_1 \rightarrow \Omega_2$ be a lift of $f$. Then 
\begin{enumerate}
\item $\wt{f}:\Omega_1 \rightarrow \Omega_2$ is a projective map, 
\item there exists a unique $T \in \Pb(\Lin(\Rb^{d_1+1}, \Rb^{d_2+1}))$ 
such that 
\begin{align*}
\wt{f}(p)=T(p)  \text{ for all } p \in \Omega_1,
\end{align*}
\item there exists a non-trivial homomorphism $\rho:\Gamma_1 \rightarrow \Gamma_2$ such that 
\begin{align*}
T( \gamma p) = \rho(\gamma)T(p)
\end{align*}
for all $p \in \Omega_1$ and $\gamma \in \Gamma_1$, 
\item $[\ker T]$ is $\Gamma_1$-invariant. 
\end{enumerate}
 
\end{proposition}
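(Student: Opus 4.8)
My plan is to prove the four assertions in turn, with everything resting on a single rigidity fact that I would establish first: \emph{two non-constant elements $[S_1],[S_2]\in\Pb(\Lin(\Rb^{d_1+1},\Rb^{d_2+1}))$ that agree on a nonempty open set on which both are defined must be equal.} To prove this I would lift to linear maps $S_1,S_2$ and write $S_1(v)=\lambda(v)S_2(v)$ on the open cone $C$ over that open set; since $C$ avoids $\Pb(\ker S_i)$, $\lambda$ is a nonvanishing scalar function there. Because $\operatorname{rank}(S_2)\ge 2$, near any $v\in C$ one can choose $w$ with $S_2(v),S_2(w)$ linearly independent and $v+w\in C$; expanding $S_1(v+w)$ in two ways then forces $\lambda(v)=\lambda(w)=\lambda(v+w)$, so $\lambda$ is locally constant, hence constant on $C$, and $S_1,S_2$ are proportional on $C$ and therefore everywhere. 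I would also record at the outset that, since $f$ is non-constant, $\wt f$ is non-constant on \emph{every} nonempty open subset of $\Omega_1$: the set of points near which $\wt f$ is locally constant is open and closed, so by connectedness of $\Omega_1$ a nonempty such set would force $\wt f$ globally constant.

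Part (1) is then immediate: the quotient map $\pi_i:\Omega_i\to M_i$ is a covering map that is a local projective diffeomorphism (conjugate by the developing map to $\wt{M_i}\to M_i$), so near each point of $\Omega_1$ one has $\wt f=\pi_2^{-1}\circ f\circ\pi_1$ for a suitable local inverse $\pi_2^{-1}$ and $\pi_1$ restricted to a neighborhood on which it is a diffeomorphism, a composition of projective maps. For part (2), the definition of a projective map together with (1) gives, for each $p\in\Omega_1$, a neighborhood $U_p$ with $\wt f|_{U_p}=[S_{U_p}]$ for some $[S_{U_p}]\in\Pb(\Lin)$, and every $[S_{U_p}]$ is non-constant by the remark above. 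On overlaps two of these representatives agree on a nonempty open set, hence are equal by the rigidity fact; since $\Omega_1$ is connected they all coincide with a single $[T]$, so $\wt f=[T]|_{\Omega_1}$. Since $\wt f$ is defined on all of $\Omega_1$ and locally equals an honest representative of $[T]$ --- whose kernel therefore agrees with $\ker T$ --- we get $\Omega_1\cap\Pb(\ker T)=\varnothing$. Uniqueness of $[T]$ is the rigidity fact once more, $\Omega_1$ being open.

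For part (3), fix $\gamma\in\Gamma_1$. From $\pi_1\circ\gamma=\pi_1$ and $\pi_2\circ\wt f=f\circ\pi_1$ we obtain $\pi_2\circ(\wt f\circ\gamma)=\pi_2\circ\wt f$, so for each $p\in\Omega_1$ there is a unique $g\in\Gamma_2$ (the $\Gamma_2$-action being free) with $\wt f(\gamma p)=g\,\wt f(p)$. For each fixed $g$ the locus where this identity holds is open and closed; these loci are disjoint and cover the connected set $\Omega_1$, so exactly one is nonempty, and we let $\rho(\gamma)$ be the corresponding $g$. Thus $T(\gamma p)=\rho(\gamma)T(p)$ for all $p$, and freeness of $\Gamma_2$ forces $\rho(\gamma_1\gamma_2)=\rho(\gamma_1)\rho(\gamma_2)$. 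For non-triviality: if $\rho$ were trivial then $\wt f=[T]$ would be $\Gamma_1$-invariant, so $\Gamma_1$ would act by scalars on $\Rb^{d_1+1}/\ker T$ (as $T$ descends to an injection there and the $\Gamma_1$-invariance of $[T]$ gives $T\wh\gamma=\mu(\gamma)T$), hence trivially on $\Pb(\Rb^{d_1+1}/\ker T)$; then the image of $\Omega_1$ under the canonical projection through which $[T]$ factors would be a $\Gamma_1$-invariant convex open subset of $\Pb(\Rb^{d_1+1}/\ker T)$ onto which the compact manifold $M_1=\Gamma_1\backslash\Omega_1$ surjects, hence compact, hence all of $\Pb(\Rb^{d_1+1}/\ker T)$ --- impossible for a convex set unless that projective space is a point, which would make $\operatorname{rank}(T)=1$ and $f$ constant, contrary to hypothesis.

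For part (4), part (3) says $[T]\circ\gamma$ and $\rho(\gamma)\circ[T]$ agree on $\Omega_1$, and both are non-constant, so the rigidity fact gives $T\circ\wh\gamma=\mu(\gamma)\,\wh{\rho(\gamma)}\circ T$ for a nonzero scalar $\mu(\gamma)$ and any lifts $\wh\gamma\in\GL_{d_1+1}(\Rb)$, $\wh{\rho(\gamma)}\in\GL_{d_2+1}(\Rb)$; comparing kernels yields $\wh\gamma(\ker T)=\ker T$, i.e.\ $[\ker T]$ is $\Gamma_1$-invariant. The conceptual heart of the proof --- and essentially its only substantial step --- is the rigidity fact of the first paragraph: it is what glues the locally-defined projective-linear pieces of $\wt f$ into a single global $[T]$ and, in part (4), delivers the $\Gamma_1$-invariance of $[\ker T]$ for free. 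The one remaining point needing care is the non-triviality of $\rho$, which (unlike parts (1), (2), (4), valid for any convex projective manifolds) relies on compactness of $M_1$ via the dichotomy that a convex open set cannot be a positive-dimensional projective space.
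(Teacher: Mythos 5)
Your proof is correct, and its skeleton (lift, a rigidity statement for projective-linear maps agreeing on an open set, equivariance, kernel comparison) matches the paper's; but two of your sub-arguments are genuinely different. First, your rigidity fact is the paper's Lemma~\ref{lem:rank2}, which the paper proves by observing that $x \mapsto T_i(x)$ is analytic on the connected set $\Pb(\Rb^{d_1+1}) \setminus [\ker T_1 \cup \ker T_2]$ and then identifying the kernels via Lemma~\ref{lem:cont}; your proof instead shows the proportionality factor $\lambda$ is locally constant by a direct linear-algebra computation with $S_1(v+w)$. Yours is more elementary and self-contained; the paper's is shorter given its Lemma~\ref{lem:cont}, which it reuses elsewhere. (Also note the paper gets part (2) in one step by observing that $(\Omega_i,\Id)$ are themselves charts, whereas you glue local representatives --- both fine.) Second, and more substantively, for the non-triviality of $\rho$ the paper uses Lemma~\ref{lem:orb_ext} ($\Ext(\Omega_1) \subset \overline{\Gamma_1 \cdot p}$ for a co-compact action), so that trivial $\rho$ forces $T$ to be constant on the extreme points and hence on $\Omega_1$; you instead show trivial $\rho$ makes $\Gamma_1$ act trivially on $\Pb(\Rb^{d_1+1}/\ker T)$, so $q(\Omega_1)$ is compact, open, and therefore all of that projective space, which is impossible in positive dimension. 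One small imprecision there: rather than asserting $q(\Omega_1)$ is convex, it is cleaner to say $q(\Omega_1) = \overline{T}^{-1}(T(\Omega_1)) \subseteq \overline{T}^{-1}(\Omega_2 \cap [\Imag T])$, which cannot contain a full projective line since $\Omega_2$ is convex; this is an immediate repair. Both your argument and the paper's use co-compactness of the $\Gamma_1$-action, which is not in the literal hypotheses of the proposition but holds in every application --- you correctly flag this dependence.
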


\begin{remark}
Suppose $S_1, S_2 \in \Lin(\Rb^{d_1+1}, \Rb^{d_2+1})$ are two rank one linear maps with the same image but $\ker S_1 \neq \ker S_2$. If $\Omega_1$ is a convex open set and $[\ker S_1 \cup \ker S_2]$ does not intersect $\Omega_1$ then $[S_1]$ and $[S_2]$ induce the same projective map on $\Omega_1$ even though $[S_1] \neq [S_2]$ as elements of $\Pb(\Lin(\Rb^{d_1+1}, \Rb^{d_2+1}))$. This example leads to the assumption that $f$ is non-constant in Proposition~\ref{prop:proj_map}.
\end{remark}

We begin the proof of the proposition with some lemmas. 

\begin{lemma}
\label{lem:proj_univ}
Suppose $\Omega_1 \subset \Pb(\Rb^{d_1+1})$ and $\Omega_2 \subset \Pb(\Rb^{d_2+1})$ are proper open convex sets. If $f: \Omega_1 \rightarrow \Omega_2$ is a projective map then there exists $T \in \Pb(\Lin(\Rb^{d_1+1}, \Rb^{d_2+1}))$ such that $f(p)=T(p)$ for all $p \in \Omega_1$.
\end{lemma}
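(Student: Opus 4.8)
The plan is to cover $\Omega_1$ by affine charts on which $f$ agrees with a projective map, and then patch these local projective representatives into a single global one using the connectedness and convexity of $\Omega_1$. Since $\Omega_1$ is a proper convex open set, by the fact recalled in Section~\ref{sec:real_proj_geom} it sits as a bounded convex open subset of some affine chart $\Ab_1 \subset \Pb(\Rb^{d_1+1})$; similarly $\Omega_2$ sits inside an affine chart $\Ab_2$. By definition of a projective map, each point $p \in \Omega_1$ has a neighborhood $U_p$ on which $f$ agrees with $x \mapsto T_p(x)$ for some $T_p \in \Pb(\Lin(\Rb^{d_1+1}, \Rb^{d_2+1}))$. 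The key local observation is a rigidity statement: two elements $[S], [S'] \in \Pb(\Lin(\Rb^{d_1+1}, \Rb^{d_2+1}))$ that induce the same map on a nonempty open subset $V$ of $\Omega_1$ — with image also in a proper convex set so that no collapsing ambiguity as in the Remark arises — must be equal. First I would prove this: lift $[S], [S']$ to linear maps $S, S'$; the hypothesis says $S(v)$ and $S'(v)$ are proportional for every $v$ in the cone over $V$, hence (by an open-set/Zariski-density argument, or by polarization using that a scalar-valued function $v\mapsto \lambda(v)$ with $S'(v)=\lambda(v)S(v)$ must be constant on each line and the lines through $V$ span) for all $v \in \Rb^{d_1+1}$, so $[S]=[S']$.

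Granting local uniqueness, the patching is a standard monodromy argument. Fix a basepoint $p_0 \in \Omega_1$ with its representative $[T_{p_0}]$. For any $q \in \Omega_1$, join $p_0$ to $q$ by the line segment inside $\Omega_1$ (here convexity is essential), cover the segment by finitely many of the neighborhoods $U_{p_i}$, and observe that on each nonempty overlap the two local representatives agree on an open set, hence by local uniqueness are equal as elements of $\Pb(\Lin)$. Chaining these equalities along the segment shows $[T_q] = [T_{p_0}]$, so there is a single $T \in \Pb(\Lin(\Rb^{d_1+1}, \Rb^{d_2+1}))$ with $f(p) = T(p)$ for all $p \in \Omega_1$. (One does need to check consistency is path-independent, but since all the representatives along any segment equal the fixed $[T_{p_0}]$, there is nothing further to verify; alternatively, connectedness of $\Omega_1$ together with the fact that the locus where the local representative equals $[T_{p_0}]$ is both open and closed does the job.)

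The one genuine subtlety — and the step I expect to be the main obstacle — is the local uniqueness claim, specifically ruling out the degeneration described in the Remark. If $f$ were locally constant somewhere, a rank-one $T$ with various kernels could represent it, and these need not be projectively equal. However, since $f: \Omega_1 \to \Omega_2$ has image in the proper convex open set $\Omega_2$ and the local representatives must actually land in (an affine chart containing) $\Omega_2$, the relevant kernels are disjoint from $\Omega_1$; the cleanest fix is to note that it suffices to prove equality of the two representatives on the complement of their kernels intersected with $\Omega_1$, which is a nonempty open set, and then the scalar-function argument above applies verbatim. If even this is delicate, one can instead work infinitesimally: a projective map is in particular smooth, its differential at a point of $\Omega_1$ together with its value determines the linear-fractional expression locally, and smoothness plus the identity theorem for the rational functions defining a projective transformation forces agreement on the connected set $\Omega_1$. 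Either route reduces everything to elementary linear algebra once the affine-chart normalization is in place.
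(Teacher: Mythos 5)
Your route is genuinely different from the paper's, and much longer than it needs to be. The paper's proof is a one-liner: with the definition of projective map given in Subsection~\ref{subsec:proj_maps}, the pairs $(\Omega_1,\Id)$ and $(\Omega_2,\Id)$ are themselves charts on the projective manifolds $\Omega_1$ and $\Omega_2$, and the definition requires that $\Id\circ f\circ \Id^{-1}=f$ coincide with a \emph{single} $T\in\Pb(\Lin(\Rb^{d_1+1},\Rb^{d_2+1}))$ on all of $\Omega_1\cap f^{-1}(\Omega_2)=\Omega_1$. There is nothing to patch. What you actually prove is the a priori stronger statement that a map which is only \emph{locally} of projective-linear form is globally so; that is a legitimate fact (essentially Lemma~\ref{lem:rank2} plus connectedness of $\Omega_1$), but it is not what the lemma asks for, and the extra work buys nothing here.

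Within your own argument there is one real soft spot: the ``local uniqueness'' claim is false as stated. Two rank-one elements of $\Pb(\Lin(\Rb^{d_1+1},\Rb^{d_2+1}))$ with the same image but different kernels, both kernels disjoint from $\Omega_1$, induce the same (constant) map on $\Omega_1$ while being distinct in $\Pb(\Lin(\Rb^{d_1+1},\Rb^{d_2+1}))$ --- this is exactly the Remark following Proposition~\ref{prop:proj_map}, and the counterexample there already has image inside a proper convex set, so your parenthetical hypothesis does not exclude it. Your proposed fix (prove the two representatives agree as maps off their kernels) does not restore equality in $\Pb(\Lin(\Rb^{d_1+1},\Rb^{d_2+1}))$, which is what the chaining along the segment needs: without it the representative carried to the endpoint $q$ need not equal $[T_{p_0}]$, and $[T_{p_0}]$ need not even be defined at $q$ if $q$ lies in its kernel. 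The repair is to split off the degenerate case: if some local representative has rank one then $f$ is locally constant, hence constant on the connected set $\Omega_1$, and one exhibits directly a rank-one $T$ with image $f(\Omega_1)$ and kernel a hyperplane missing $\Omega_1$ (such a hyperplane exists because $\Omega_1$ sits in an affine chart). If every local representative has rank at least two, your polarization argument (or the analyticity argument of Lemma~\ref{lem:rank2}) does give uniqueness in $\Pb(\Lin(\Rb^{d_1+1},\Rb^{d_2+1}))$, and the monodromy step then goes through.
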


\begin{proof} This follows from the fact that $(\Omega_1, \Id)$ and $(\Omega_2, \Id)$ are charts on $\Omega_1$ and $\Omega_2$ respectively.
\end{proof}

\begin{lemma}
\label{lem:cont}
Suppose $T  \in \Pb(\Lin(\Rb^{d_1+1}, \Rb^{d_2+1}))$ is a map with rank at least two. Then 
\begin{align*}
[\ker T] = \{ x \in \Pb(\Rb^{d_1+1}) : \lim_{y \rightarrow x} T(y) \text{ does not exist} \}.
\end{align*}
\end{lemma}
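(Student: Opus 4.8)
The plan is to prove the two inclusions separately, working with a fixed representative $T \in \Lin(\Rb^{d_1+1},\Rb^{d_2+1})$ (a lift of the given class, which is well-defined up to a nonzero scalar, so the set of points where $T$ fails to have a limit is unchanged). Write $K = \ker T \subset \Rb^{d_1+1}$, a subspace of codimension $\geq 2$ by the rank hypothesis, and note that $T$ descends to a genuine continuous map $\Pb(\Rb^{d_1+1}) \setminus [K] \to \Pb(\Rb^{d_2+1})$, $[y] \mapsto [Ty]$, since $Ty \neq 0$ there and scaling $y$ scales $Ty$; this already gives the inclusion $\{x : \lim_{y\to x} T(y) \text{ does not exist}\} \subseteq [K]$.

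For the reverse inclusion, fix $x = [v]$ with $v \in K \setminus \{0\}$, and I want to exhibit two sequences $y_n \to x$ and $y_n' \to x$ in $\Pb(\Rb^{d_1+1}) \setminus [K]$ along which $T(y_n)$ and $T(y_n')$ converge to different points. Because $\operatorname{rank} T \geq 2$, the image $T(\Rb^{d_1+1})$ contains two linearly independent vectors; pull these back to $w_1, w_2 \in \Rb^{d_1+1}$ with $Tw_1, Tw_2$ linearly independent. Then for $t \neq 0$ consider the rays $y_t = [v + t w_1]$ and $y_t' = [v + t w_2]$: as $t \to 0$ both converge to $[v] = x$ in $\Pb(\Rb^{d_1+1})$, and for small $t \neq 0$ we have $v + tw_1 \notin K$ (otherwise $tw_1 \in K$, so $Tw_1 = 0$, contradiction), and likewise $v+tw_2 \notin K$. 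Now $T(y_t) = [T(v+tw_1)] = [tTw_1] = [Tw_1]$, which is constant in $t$, and similarly $T(y_t') = [Tw_2]$; since $Tw_1$ and $Tw_2$ are linearly independent, $[Tw_1] \neq [Tw_2]$, so the two sequences $y_{1/n}$ and $y_{1/n}'$ both approach $x$ but their $T$-images have distinct limits. Hence $\lim_{y \to x} T(y)$ does not exist, giving $[K] \subseteq \{x : \lim_{y\to x} T(y) \text{ does not exist}\}$.

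The only mild subtlety, and the step to be careful with, is handling the case in which $v$ itself lies in the span of $w_1$ or $w_2$ (so that, say, $v + tw_1$ might repeatedly hit $[K]$ or the curve $t \mapsto [v+tw_1]$ degenerates); but this is easily circumvented: if needed, replace $w_i$ by $w_i + u$ for a generic $u \in K$, which does not change $Tw_i$ and moves the line $[v + t(w_i+u)]$ off of $[K]$ for all small $t \neq 0$ while still limiting to $[v]$. Alternatively one can simply note that the set of $t$ with $v + tw_i \in K$ is the zero set of a nonzero affine function of $t$ (nonzero because $Tw_i \neq 0$ forces $w_i \notin K$), hence finite, so all sufficiently small nonzero $t$ work. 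With this observation the argument is complete, and it is genuinely the place where the rank-$\geq 2$ hypothesis is used: rank one would force $[Tw_1] = [Tw_2]$ and the limit would in fact exist.
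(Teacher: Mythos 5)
Your proof is correct and follows essentially the same route as the paper: continuity (indeed analyticity) of $x \mapsto T(x)$ off $[\ker T]$ gives one inclusion, and for $v \in \ker T$ the paper likewise picks $e_1, e_2$ with $T([e_1]) \neq T([e_2])$ and observes that $T([v + \tfrac{1}{n}e_i]) = [T e_i]$ yields two sequences approaching $[v]$ with distinct limits. Your extra care about the curves $t \mapsto [v+tw_i]$ meeting $[\ker T]$ is resolved exactly as you note (for $t \neq 0$, $T(v+tw_i) = tTw_i \neq 0$), so nothing further is needed.
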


\begin{proof}
Since the map $x \rightarrow T(x)$ is well defined and analytic on $\Pb(\Rb^{d_1+1}) \setminus [\ker T]$ we have the $\supseteq$ direction.

Now suppose $v \in \ker T$. Since $T$ has rank at least two there exists $e_1, e_2 \in \Rb^{d_1+1} \setminus \ker T$ such that $T([e_1]) \neq T([e_2])$ then 
\begin{align*}
\lim_{n\rightarrow \infty} T\left( \left[v+\frac{1}{n}e_1\right]\right) = T([e_1])\neq T([e_2]) = \lim_{n\rightarrow \infty} T\left( \left[v+\frac{1}{n}e_2\right]\right).
\end{align*}

\end{proof}

\begin{lemma}
\label{lem:rank2}
Suppose $T_1,T_2  \in \Pb(\Lin(\Rb^{d_1+1}, \Rb^{d_2+1}))$ each have rank at least two. If there exists an open set $U \subset \Pb(\Rb^{d_1+1})$ such that $T_1(x) = T_2(x)$ for all $x \in U$ then  $T_1 = T_2$.
\end{lemma}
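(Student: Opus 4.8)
The plan is to reduce the statement to a routine fact about polynomials. Fix linear representatives $\widetilde{T}_1, \widetilde{T}_2 \in \Lin(\Rb^{d_1+1},\Rb^{d_2+1})$ of $T_1, T_2$. The hypothesis says that for each $x$ in the open cone over $U$ (minus $[\ker T_1] \cup [\ker T_2]$) the vectors $\widetilde{T}_1(x)$ and $\widetilde{T}_2(x)$ are parallel, i.e. $\widetilde{T}_1(x) \wedge \widetilde{T}_2(x) = 0$ in $\Lambda^2 \Rb^{d_2+1}$. Each coordinate of the map $x \mapsto \widetilde{T}_1(x) \wedge \widetilde{T}_2(x)$ is a homogeneous quadratic polynomial on $\Rb^{d_1+1}$, and these polynomials vanish on a non-empty open subset of $\Rb^{d_1+1}$; hence they vanish identically. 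Therefore $\widetilde{T}_1(x)$ and $\widetilde{T}_2(x)$ are parallel for \emph{every} $x \in \Rb^{d_1+1}$.

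Next I would upgrade "parallel for every $x$" to "proportional by a constant". Pick $e_1, e_2 \in \Rb^{d_1+1}$ with $\widetilde{T}_1([e_1]) \neq \widetilde{T}_1([e_2])$ in $\Pb(\Rb^{d_2+1})$; such vectors exist because $T_1$ has rank at least two, so its image in projective space is not a single point. Write $\widetilde{T}_2(e_i) = \lambda_i \widetilde{T}_1(e_i)$ for scalars $\lambda_i$ (both nonzero since $e_i \notin \ker \widetilde{T}_1 = \ker \widetilde{T}_2$, the kernels agreeing because the wedge vanishes identically and the ranks are at least two). Applying the parallelism condition to $e_1 + e_2$ gives $\lambda_1 \widetilde{T}_1(e_1) + \lambda_2 \widetilde{T}_1(e_2) \parallel \widetilde{T}_1(e_1) + \widetilde{T}_1(e_2)$; since $\widetilde{T}_1(e_1)$ and $\widetilde{T}_1(e_2)$ are linearly independent this forces $\lambda_1 = \lambda_2$. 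A standard connectedness-of-the-rank-$\geq 2$-locus argument then shows the common value $\lambda := \lambda_1 = \lambda_2$ works against any vector not in the kernel, and hence $\widetilde{T}_2 = \lambda \widetilde{T}_1$, i.e. $T_1 = T_2$ in $\Pb(\Lin(\Rb^{d_1+1},\Rb^{d_2+1}))$.

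The only mildly delicate point — and the one I would phrase carefully rather than the vanishing of the quadratics, which is genuinely routine — is the last step: showing that a single scalar works uniformly. I expect the cleanest route is to avoid per-vector scalars altogether and argue directly: having shown $\widetilde{T}_1(x)\wedge\widetilde{T}_2(x)\equiv 0$, restrict to a two-dimensional subspace $W$ spanned by $e_1,e_2$ on which $\widetilde{T}_1$ is injective; then $\widetilde{T}_2|_W = \widetilde{T}_1|_W \circ A$ for some linear $A: W \to W$, and the wedge condition forces $\widetilde{T}_1(Ax) \parallel \widetilde{T}_1(x)$ for all $x \in W$, so $A$ has every vector as an eigenvector, whence $A = \lambda \Id_W$. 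Since $\widetilde{T}_1|_W$ determines $\lambda$ and the relation $\widetilde{T}_2 = \lambda\widetilde{T}_1$ can then be checked to propagate off $W$ using $\widetilde{T}_1(x)\wedge\widetilde{T}_2(x)=0$ once more together with injectivity on $W$, we conclude $T_1 = T_2$.
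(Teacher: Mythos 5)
Your proof is correct, but it takes a genuinely different route from the paper's. The paper notes that, since both kernels have codimension at least two, the set $V=\Pb(\Rb^{d_1+1})\setminus[\ker T_1\cup\ker T_2]$ is connected and the induced maps are real-analytic on it, so by the identity theorem they agree on all of $V$; it then uses Lemma~\ref{lem:cont} to recover $[\ker T_i]$ as the locus where the limit fails to exist, giving $\ker T_1=\ker T_2$ and hence $T_1=T_2$. You replace analytic continuation by the observation that the coordinates of $x\mapsto\widetilde{T}_1(x)\wedge\widetilde{T}_2(x)$ are homogeneous quadratics vanishing on a non-empty open set, hence identically zero; this is more elementary and bypasses Lemma~\ref{lem:cont} entirely. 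Moreover, the step you single out as delicate --- upgrading pointwise proportionality off a common kernel to a single global scalar $\widetilde{T}_2=\lambda\widetilde{T}_1$ --- is precisely what the paper leaves implicit in its closing sentence ``This implies that $T_1=T_2$,'' and your $A=\lambda\Id$ argument on a two-dimensional slice $W$, propagated using the vanishing wedge, settles it. Two points worth writing out in full: the equality $\ker\widetilde{T}_1=\ker\widetilde{T}_2$ (if $v\in\ker\widetilde{T}_1\setminus\ker\widetilde{T}_2$, then expanding the wedge at $x+tv$ gives $\widetilde{T}_1(x)\wedge\widetilde{T}_2(v)=0$ for all $x$, forcing $T_1$ to have rank one), and the propagation off $W$ (for $v\notin\ker\widetilde{T}_1$, at least one of $\widetilde{T}_1(e_1),\widetilde{T}_1(e_2)$ is linearly independent from $\widetilde{T}_1(v)$, and testing parallelism at $v+e_i$ pins the scalar to $\lambda$).
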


\begin{proof} Since $T_1$ and $T_2$ each have rank at least two, the set $V=\Pb(\Rb^{d_1+1}) \setminus [\ker T_1 \cup \ker T_2]$  is connected. Moreover $x \rightarrow T_1(x)$ and $x \rightarrow T_2(x)$ are analytic on $V$. Since they agree on an open set, they must agree on all of $V$. Then using Lemma~\ref{lem:cont} we see that $\ker T_1 = \ker T_2$ and then that $T_1(x) = T_2(x)$ for all $x \in \Pb(\Rb^{d_1+1}) \setminus [\ker T_i]$. This implies that $T_1 = T_2$.
\end{proof}

\begin{proof}[Proof of Proposition~\ref{prop:proj_map}]
For $i \in \{1,2\}$ let $\pi_i : \Omega_i \rightarrow M_i$ be the covering map. Notice that $\pi_1$ and $\pi_2$ are projective maps and local diffeomorphisms.  Since $\pi_2 \circ \wt{f} = f \circ \pi_1$ and the projectivity of a map is a local condition we see that $\wt{f}$ is projective as well. By Lemma~\ref{lem:proj_univ}  there exists $T \in \Pb(\Lin(\Rb^{d_1+1}, \Rb^{d_2+1}))$ 
such that 
\begin{align*}
T(p) = \wt{f}(p) \text{ for all } p \in \Omega_1.
\end{align*}
Since $\wt{f}$ is non-constant, $T$ has rank at least two. Thus, by Lemma~\ref{lem:rank2}, $T$ is unique.

Since $\wt{f}$ is a lift of $f$ there exists a homomorphism $\rho : \Gamma_1 \rightarrow \Gamma_2$ such that 
\begin{align*}
T( \gamma p) = \wt{f}(\gamma p ) = \rho(\gamma) \wt{f}(p)= \rho(\gamma)T(p)
\end{align*}
for all $p \in \Omega_1$ and $\gamma \in \Gamma_1$. We claim that $\rho$ is non-trivial. Fix some $p \in \Omega$. By Lemma~\ref{lem:orb_ext} 
\begin{align*}
\Ext(\Omega_1) \subset \overline{\Gamma_1 \cdot p}.
\end{align*}
Then if $\rho$ was trivial, we would have that $T(\Ext(\Omega_1)) = T(p)$ which would imply that $T(\Omega_1) = T(p)$. But this is impossible since $f$ is non-constant. 

Now assume  $\gamma \in \Gamma_1$. Then $(T \circ \gamma)(p) = (\rho(\gamma) \circ T)(p)$ for all $p \in \Omega_1$. Hence by Lemma~\ref{lem:rank2}, $(T \circ \gamma) = (\rho(\gamma) \circ T)$. Then 
\begin{align*}
\gamma^{-1} [\ker T] = [\ker (T \circ \gamma)] = [\ker ( \rho(\gamma) \circ T)] = [\ker T].
\end{align*}
Since $\gamma \in \Gamma_1$ was arbitrary this implies that $[\ker T]$ is $\Gamma_1$-invariant.
\end{proof}

\section{Convex cones, irreducibility, and lifting projective maps}\label{sec:cones}

At times it will be helpful to work in affine space instead of projective space. To this end, suppose $M = \Gamma \backslash \Omega$ is a compact proper convex real projective manifold. Since $\Omega$ is convex the preimage of $\Omega$ under the map $\Rb^{d+1} \setminus \{0\} \rightarrow \Pb(\Rb^{d+1})$ has two components $\Cc$ and $-\Cc$. Each component is a convex open cone which does not contain any affine lines. Let 
\begin{align*}
\overline{\Gamma} = \{ \gamma \in \SL_{d+1}^{\pm}(\Rb) : \gamma(\Cc)  =\Cc \text{ and } [\gamma] \in \Gamma\}
\end{align*}
where 
\begin{align*}
\SL_{d+1}^{\pm}(\Rb) = \{ \gamma \in \GL_{d+1}(\Rb) : \det (\gamma) = \pm 1\}.
\end{align*}
Notice that the map $\gamma \rightarrow [\gamma]$ induces an isomorphism $\overline{\Gamma} \rightarrow \Gamma$, since $\Cc \cap (-\Cc) = \emptyset$ and so $\gamma$ and $-\gamma$ cannot both preserve $\Cc$. Now let 
\begin{align*}
\Lambda := \ip{ \overline{\Gamma}, e \Id} \cong \Gamma \times \Zb
\end{align*}
then $\Lambda$ is discrete and acts co-compactly on $\Cc$.

\subsection{Reducibility}

\begin{definition} \
\begin{enumerate}
\item A convex cone $\Cc \subset \Rb^{d+1}$ is called \emph{proper} if $\Cc$ does not contain any affine lines. 
\item A proper convex cone $\Cc \subset \Rb^{d+1}$ is called \emph{reducible} if there exists a decomposition $\Rb^{d+1}=V_1 \oplus V_2$ and proper convex cones $\Cc_1 \subset V_1$ and $\Cc_2 \subset V_2$ such that
\begin{align*}
\Cc = \Cc_1 + \Cc_2 = \{ (c_1,c_2) : c_1 \in \Cc_1, c_2 \in \Cc_2\}.
\end{align*}
A proper convex cone is \emph{irreducible} if it is not reducible. 
\end{enumerate}
\end{definition}

Vey proved that when $M = \Gamma \backslash \Omega$ is a proper convex real projective manifold and $\Omega$ is irreducible then a discrete lift of $\Gamma$ acts irreducibly on $\Rb^{d+1}$. More precisely:

\begin{theorem}\label{thm:irred}\cite[Theorem 3, Theorem 5]{V1970}
Suppose $\Cc \subset \Rb^{d+1}$ is a proper convex open cone and $\Lambda \leq \GL_{d+1}(\Rb)$ is a discrete group acting co-compactly on $\Cc$. Then there exists a $\Lambda$-invariant decomposition $\Rb^{d+1}=V_1 \oplus V_2 \oplus \dots \oplus V_k$ such that 
\begin{enumerate}
\item $\Lambda$ acts irreducibly on each $V_i$
\item for each $1 \leq i \leq k$ there is a proper convex cone $\Cc_i \subset V_i$ such that 
\begin{align*}
\Cc = \Cc_1 + \dots + \Cc_k.
\end{align*} 
\end{enumerate}
\end{theorem}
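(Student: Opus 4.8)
The plan is to deduce this from the canonical invariant geometry that every proper convex cone carries. Since the statement is a decomposition result, I would set it up as an induction on $d$ via the following one-step splitting lemma: \emph{if $\Lambda$ preserves a subspace $W$ with $0 \neq W \neq \Rb^{d+1}$, then there is a $\Lambda$-invariant complement $W'$ with $\Rb^{d+1} = W \oplus W'$ such that $\Cc_W := \Cc \cap W$ and $\Cc_{W'} := \Cc \cap W'$ are proper convex open cones in $W$ and $W'$ on which $\Lambda$ still acts co-compactly, and $\Cc = \Cc_W + \Cc_{W'}$.} Granting the lemma, apply it with $W$ a minimal nonzero $\Lambda$-invariant subspace $V_1$: minimality forces $\Lambda$ to act irreducibly on $V_1$, and one then recurses on $(\Cc_{W'}, W', \Lambda)$, which is again a proper convex open cone with a co-compact $\Lambda$-action of smaller dimension. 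After at most $d+1$ steps this produces the asserted decomposition.

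To prove the splitting lemma I would first put the Koecher--Vinberg geometry on $\Cc$. Let $\phi(x) = \int_{\Cc^*} e^{-\langle \xi, x \rangle}\, d\xi$ be the characteristic function of $\Cc$, where $\Cc^*$ is the dual cone. This is a positive real-analytic function on $\Cc$ which tends to $+\infty$ at $\partial\Cc$ and satisfies $\phi(g x) = \abs{\det g}^{-1} \phi(x)$ for all $g \in \Aut(\Cc)$; moreover $\log\phi$ is strictly convex. Hence $g_\Cc := D^2(\log\phi)$ is a Riemannian metric on $\Cc$, where $D$ is the flat affine connection coming from the ambient linear structure, and since the factor $\abs{\det g}^{-1}$ is killed by the second differential, both $D$ and $g_\Cc$ are invariant under $\Aut(\Cc)$, hence under $\Lambda$. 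Thus $(\Cc, D, g_\Cc)$ is a simply connected Hessian manifold on which $\Lambda$ acts properly discontinuously by affine isometries with \emph{compact} quotient.

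Now the $\Lambda$-invariant subspace $W$ determines the $D$-parallel, $\Lambda$-invariant ``constant $W$-direction'' distribution on $\Cc$. Its $g_\Cc$-orthogonal complement is automatically $\Lambda$-invariant, so the entire lemma comes down to showing that this orthogonal complement is again $D$-parallel, i.e.\ is the constant distribution of some fixed subspace $W'$. Once this is known, $\Rb^{d+1} = W \oplus W'$ with both summands $\Lambda$-invariant, and $g_\Cc$ is block-diagonal for $W \oplus W'$ at every point of $\Cc$; feeding this back into $g_\Cc = D^2\log\phi$ shows $\log\phi$ is, up to an additive constant, a sum of a function of the $W$-coordinate and a function of the $W'$-coordinate, which forces $\phi$, hence $\Cc^*$, hence $\Cc$, to split as a direct sum along $W \oplus W'$. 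In particular $\Cc \cap W$ and $\Cc \cap W'$ come out open and proper, $\Lambda$ acts co-compactly on each since the splitting is $\Lambda$-equivariant, and $\Cc = (\Cc \cap W) + (\Cc \cap W')$.

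The step I expect to be the main obstacle is precisely the parallelism of the orthogonal complement. This is \emph{not} a consequence of the Riemannian de Rham theorem: the flat connection $D$ is not the Levi--Civita connection of $g_\Cc$, so $D$-parallel transport need not be a $g_\Cc$-isometry, and for a general incomplete Hessian manifold the orthogonal complement of a parallel distribution can fail to be parallel. The resolution must use co-compactness: one works with the full Hessian data --- equivalently with Koszul's canonical forms, or with the associated left-symmetric (``clan'') algebra structure --- and exploits completeness and recurrence on the compact quotient $\Lambda \backslash \Cc$ to rule out the twisting. This is the content of the Koszul--Vey decomposition theory for compact locally flat and Hessian manifolds, and carrying it out in the precise form needed here is where the real work of Vey's argument lies.
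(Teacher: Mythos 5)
The paper itself gives no proof of this statement: it is quoted directly from Vey \cite{V1970}, so there is no argument in the text to compare against, and what you have written is an attempted reconstruction of Vey's theorem. Your overall architecture --- induction on dimension via a one-step splitting lemma, the $\Aut(\Cc)$-invariant Hessian metric $D^2\log\phi$ built from the characteristic function, and the reduction to parallelism of the orthogonal complement of a parallel distribution --- is the right circle of ideas, and the properties you list for $\phi$ are all correct. But there is a concrete error in the splitting lemma: the factor cones cannot be $\Cc\cap W$ and $\Cc\cap W'$. If $\Cc=\Cc_W+\Cc_{W'}$ with $\Cc_W\subset W$ and $\Cc_{W'}\subset W'$ proper convex open cones and $W'\neq 0$, then a point of $\Cc\cap W$ would have the form $(c_1,0)$ with $0\in\Cc_{W'}$, which is impossible since a proper open cone in a nonzero space cannot contain the origin; concretely, the positive quadrant in $\Rb^2$ split along the two coordinate axes meets neither axis. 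So your $\Cc_W$ and $\Cc_{W'}$ are empty in every nontrivial case, and the recursion on $(\Cc_{W'},W',\Lambda)$ is a recursion on the empty set. The correct factors are the projections $p_W(\Cc)$ and $p_{W'}(\Cc)$ along the complementary summand (equivalently, the relative interiors of $\overline{\Cc}\cap W$ and $\overline{\Cc}\cap W'$); note the theorem is careful to assert only that \emph{some} proper convex cone $\Cc_i\subset V_i$ exists, not that $\Cc_i=\Cc\cap V_i$. With that fix, co-compactness of the induced action on each factor does follow by projecting a compact fundamental set, though you should also note that the induction must be run for groups acting co-compactly rather than for discrete groups, since the image of $\Lambda$ in $\GL(W')$ need not be discrete.

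The more serious problem is the step you yourself flag as the main obstacle: that the $g_\Cc$-orthogonal complement of the $D$-parallel distribution determined by $W$ is again $D$-parallel. You correctly observe that this does not follow from the de Rham theorem because $D$ is not the Levi--Civita connection of $g_\Cc$, and that co-compactness must enter --- but you then resolve it by appeal to ``the Koszul--Vey decomposition theory,'' which is circular, since that theory is precisely the statement being proved. Everything before and after this point in your outline is routine convex analysis; this step \emph{is} the theorem, and as written your proposal localizes the difficulty without overcoming it. To make this a proof you would need to supply the actual argument (Vey's, or an equivalent one) that extracts the linear splitting of $\Rb^{d+1}$ from the existence of a co-compact affine isometric action on $(\Cc,D,g_\Cc)$; absent that, what you have is an accurate plan rather than a proof.
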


\subsection{Lifting projective maps to linear maps}\label{subsec:linear} Now suppose that $M_1 = \Gamma_1 \backslash \Omega_1$ and $M_2 = \Gamma_2 \backslash \Omega_2$ are two compact proper convex real projective manifolds.  If $f:M_1 \rightarrow M_2$ is a non-constant projective map then let $T :\Omega_1 \rightarrow \Omega_2$ and $\rho: \Gamma_1 \rightarrow \Gamma_2$ be the maps from Proposition~\ref{prop:proj_map}. Let $\Cc_1, \Cc_2$ be cones above $\Omega_1, \Omega_2$ and $\Lambda_1, \Lambda_2$ be the groups constructed at the start of this section. Now we can lift $\rho:\Gamma_1 \rightarrow \Gamma_2$ to a homomorphism $\tau:\Lambda_1 \rightarrow \Lambda_2$ and the projective map $T : \Omega_1 \rightarrow \Omega_2$ to a linear map $S:\Cc_1 \rightarrow \Cc_2$ so that
\begin{align*}
S(\phi p) = \tau(\phi)S(p)
\end{align*} 
for all $\phi \in \Lambda_1$ and $p \in \Cc_1$. 

\section{The Hilbert metric}\label{sec:metric}

For distinct points $x,y \in \Pb(\Rb^{d+1})$ let $\overline{xy}$ be the projective line containing them. Suppose $\Omega \subset \Pb(\Rb^{d+1})$ is a proper convex open set. If $x,y \in \Omega$ let $a,b$ be the two points in $\overline{xy} \cap \partial\Omega$ ordered $a, x, y, b$ along $\overline{xy}$. Then define 
\begin{align*}
d_{\Omega}(x,y) = \log [a, x,y, b]
\end{align*}
 where 
 \begin{align*}
 [a,x,y,b] = \frac{\abs{x-b}\abs{y-a}}{\abs{x-a}\abs{y-b}}
 \end{align*}
 is the cross ratio. 
 
For an open set $\Omega \subset \Pb(\Rb^{d+1})$ define
 \begin{align*}
 \Aut(\Omega) = \{ \varphi \in \PGL_{d+1}(\Rb) : \varphi(\Omega) = \Omega \}.
 \end{align*}
 
 \begin{proposition}\label{prop:hilbert_basic}\textnormal{(see for instance~\cite[Corollary 3.4]{G2009})}
Suppose $\Omega \subset \Pb(\Rb^{d+1})$ is a proper convex open set. Then $d_{\Omega}$ is a complete $\Aut(\Omega)$-invariant metric on $\Omega$ which generates the standard topology on $\Omega$. 
\end{proposition}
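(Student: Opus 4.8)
The plan is to check the four properties in turn, working in an affine chart $\Ab\subset\Pb(\Rb^{d+1})$ in which $\Omega$ is a bounded convex open set — such a chart exists because $\Omega$ is a proper convex open set — and fixing a Euclidean norm on $\Ab$. First, for distinct $x,y\in\Omega$ convexity shows $\overline{xy}\cap\Omega$ is an open subinterval of the bounded segment $\overline{xy}\cap\Ab$, and boundedness forces its two endpoints to be distinct points $a,b\in\partial\Omega$, ordered $a,x,y,b$; then $[a,x,y,b]>1$, so $d_\Omega(x,y)>0$, and setting $d_\Omega(x,x)=0$ symmetry is the identity $[a,x,y,b]=[b,y,x,a]$. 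Invariance is equally formal: any $\varphi\in\Aut(\Omega)$ carries $\partial\Omega$ to itself and preserves cross ratios of collinear quadruples, hence sends the configuration $(a,x,y,b)$ of the pair $(x,y)$ to that of $(\varphi x,\varphi y)$, giving $d_\Omega(\varphi x,\varphi y)=d_\Omega(x,y)$.

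The triangle inequality is the only non-formal point. Since $d_\Omega(x,y)$ depends on $\Omega$ only through the one-dimensional slice $\Omega\cap\overline{xy}$, for any three points $x,y,z$ I can intersect $\Omega$ with a plane — or a line, when $x,y,z$ are collinear — containing all three, which reduces the inequality to the two-dimensional (resp. one-dimensional) case. The collinear case is a direct cross-ratio computation: the distances are additive when $y$ lies between $x$ and $z$, and otherwise the inequality is immediate. For three points in general position in a bounded convex $\Omega\subset\Rb^2$, I would transport the cross ratios computing $d_\Omega(x,y)$ and $d_\Omega(y,z)$ onto the line $\overline{xz}$ using the perspectivities centered at the boundary points of $\Omega$ on $\overline{xy}$ and on $\overline{yz}$; because perspectivities preserve cross ratios this rewrites $d_\Omega(x,y)+d_\Omega(y,z)$ as the logarithm of a single ratio of lengths along $\overline{xz}$, and locating the resulting boundary points relative to $a$ and $b$ by convexity of $\Omega$ yields $d_\Omega(x,z)\le d_\Omega(x,y)+d_\Omega(y,z)$. (Equivalently, one may lift to the convex cone over $\Omega$ and invoke the subadditivity of Birkhoff's projective metric, or simply quote the classical statement.)

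For the topology and completeness, fix $x\in\Omega$, let $\delta(x)$ be the Euclidean distance from $x$ to $\partial\Omega$, and set $D=\operatorname{diam}(\Omega)$. For $y$ with $t:=|x-y|<\delta(x)/2$ the boundary points $a,b$ on $\overline{xy}$ satisfy $\delta(x)\le|x-a|,|x-b|\le D$, and expanding $[a,x,y,b]$ in $t$ gives the two-sided bound
\begin{align*}
\log\!\left(1+\tfrac{2t}{D}\right)\ \le\ d_\Omega(x,y)\ \le\ \tfrac{4t}{\delta(x)}.
\end{align*}
Thus the identity map is a homeomorphism between $\Omega$ with the Euclidean topology and $\Omega$ with $d_\Omega$, which is the topology assertion. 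The same estimates show that for fixed $x_0\in\Omega$ one has $d_\Omega(x_0,y)\to\infty$ as $y\to\partial\Omega$: if $y\to\bar y\in\partial\Omega$ then the far endpoint $b_y$ of $\overline{x_0y}\cap\Omega$ converges to $\bar y$ by continuity of the exit-point map, so $|y-b_y|\to 0$ while the other three lengths in $[a_y,x_0,y,b_y]$ stay bounded away from $0$ and $\infty$. Consequently every closed $d_\Omega$-ball lies in a Euclidean-compact subset of $\Omega$, so any $d_\Omega$-Cauchy sequence has a subsequence converging in $\Omega$ and, being Cauchy, converges; this gives completeness. The step I expect to require the most care is the planar triangle inequality.
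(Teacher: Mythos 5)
Your proof is correct, but there is nothing in the paper to compare it against: Proposition~4.1 is stated with a pointer to Goldman's survey and no argument is given, so you are supplying a proof where the paper supplies a citation. What you write is the standard one. The positivity, symmetry, and $\Aut(\Omega)$-invariance arguments are complete as stated, the local two-sided comparison $\log(1+2t/D)\le d_\Omega(x,y)\le 4t/\delta(x)$ is verified by expanding $[a,x,y,b]=(1+t/\abs{x-a})(1+t/\abs{y-b})$, and the blow-up of $d_\Omega(x_0,\cdot)$ at $\partial\Omega$ (via continuity of the radial exit map of a bounded convex body) correctly yields that closed balls are compactly contained in $\Omega$, hence completeness. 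The one step you leave as a sketch, the planar triangle inequality, is the genuinely delicate point, and your perspectivity outline is the classical argument; the detail worth pinning down is the orientation of the final comparison. Projecting $[p,x,y,q]$ from the endpoint $s$ of $\overline{yz}\cap\partial\Omega$ beyond $z$, and $[r,y,z,s]$ from the endpoint $p$ of $\overline{xy}\cap\partial\Omega$ beyond $x$, the cocycle identity collapses the sum to $\log[r'',x,z,q']$ with $r''=\overline{pr}\cap\overline{xz}$ and $q'=\overline{qs}\cap\overline{xz}$; convexity (the segments $[p,r]$ and $[q,s]$ lie in $\overline{\Omega}$) places $r''$ in $[a,x]$ and $q'$ in $[z,b]$, i.e.\ \emph{inside} the chord $[a,b]$, and it is the monotonicity "outer points closer to $x$ and $z$ give a larger cross ratio" that then yields $[r'',x,z,q']\ge[a,x,z,b]$. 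It is easy to misremember this as requiring the transported points to lie outside $[a,b]$, so if you write the step out, check the ordering explicitly; alternatively the reduction to Birkhoff's contraction/subadditivity on the cone that you mention is a clean way to avoid the picture entirely.
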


As an immediate corollary we have:

\begin{corollary}
Suppose $\Omega \subset \Pb(\Rb^{d+1})$ is a proper convex open set.  Then $\Aut(\Omega) \leq \PGL_{d+1}(\Rb)$ is a closed Lie subgroup which acts properly on $\Omega$.
\end{corollary}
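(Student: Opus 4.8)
The statement to prove is the corollary: if $\Omega \subset \Pb(\Rb^{d+1})$ is a proper convex open set, then $\Aut(\Omega) \leq \PGL_{d+1}(\Rb)$ is a closed Lie subgroup which acts properly on $\Omega$.

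\medskip

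The plan is to deduce both assertions from the fact that $d_\Omega$ is a complete $\Aut(\Omega)$-invariant metric generating the standard topology on $\Omega$ (Proposition~\ref{prop:hilbert_basic}), together with the general principle that the isometry group of a locally compact length/metric space acts properly. First I would fix a point $x_0 \in \Omega$. Since $\Aut(\Omega)$ acts by isometries of $(\Omega, d_\Omega)$, and $(\Omega, d_\Omega)$ is a proper metric space (closed balls are compact: it is complete by Proposition~\ref{prop:hilbert_basic} and locally compact since $d_\Omega$ generates the standard topology on the finite-dimensional manifold $\Omega$, and a complete locally compact length space is proper by Hopf--Rinow), the Arzel\`a--Ascoli theorem shows that for any compact $K \subset \Omega$ the set $\{\varphi \in \Aut(\Omega) : \varphi(K) \cap K \neq \emptyset\}$ is relatively compact in the compact-open topology: any such $\varphi$ maps $K$ into the bounded set $\{ y : d_\Omega(y, x_0) \le \operatorname{diam}_{d_\Omega}(K) + d_\Omega(x_0, K)\}$, and the family is equicontinuous because its members are $1$-Lipschitz.

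\medskip

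Next I would upgrade relative compactness to compactness, i.e.\ show $\Aut(\Omega)$ is closed in $\PGL_{d+1}(\Rb)$. Suppose $\varphi_n \in \Aut(\Omega)$ with $\varphi_n \to \varphi$ in $\PGL_{d+1}(\Rb)$. Then $\varphi$ is a projective transformation, $\varphi_n \to \varphi$ uniformly on compact subsets of $\Omega$, and each $\varphi_n$ is a $d_\Omega$-isometry, so passing to the limit $\varphi$ is a $d_\Omega$-isometry from $\Omega$ into its completion; since $\varphi(\Omega) \subseteq \overline{\Omega}$ is open (being a projective image of an open set) and $\varphi$ preserves the Hilbert distance, one checks $\varphi(\Omega) = \Omega$, so $\varphi \in \Aut(\Omega)$. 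Hence $\Aut(\Omega)$ is a closed subgroup of the Lie group $\PGL_{d+1}(\Rb)$, and by Cartan's closed subgroup theorem it is an (embedded) Lie subgroup.

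\medskip

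Finally, properness of the action: the map $\Aut(\Omega) \times \Omega \to \Omega \times \Omega$, $(\varphi, x) \mapsto (\varphi x, x)$ is proper iff for every compact $L \subset \Omega \times \Omega$ the set $\{\varphi : (\varphi x, x) \in L \text{ for some } x\}$ is relatively compact; taking $K$ to be a compact set whose product contains $L$, this set is contained in $\{\varphi \in \Aut(\Omega) : \varphi(K) \cap K \neq \emptyset\}$, which we showed above is relatively compact in $\PGL_{d+1}(\Rb)$, hence (being a subset of the closed set $\Aut(\Omega)$) relatively compact in $\Aut(\Omega)$. This gives properness. The main obstacle is really just organizing the standard Arzel\`a--Ascoli / Hopf--Rinow argument correctly in the projective setting and verifying that a limit of automorphisms in $\PGL_{d+1}(\Rb)$ genuinely preserves $\Omega$ (it could a priori only preserve $\overline{\Omega}$ or collapse part of the boundary); properness of $(\Omega, d_\Omega)$ and the $1$-Lipschitz (indeed isometric) property of the group elements make this routine, so I would keep the write-up short and cite Proposition~\ref{prop:hilbert_basic} for all the metric input.
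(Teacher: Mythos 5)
The paper gives no proof of this corollary at all --- it is stated as ``an immediate corollary'' of Proposition~\ref{prop:hilbert_basic} --- so there is nothing to compare line by line; your write-up is the standard argument the author evidently has in mind, and its overall structure (Hilbert metric is a complete, locally compact, $\Aut(\Omega)$-invariant length metric, hence proper by Hopf--Rinow; Arzel\`a--Ascoli controls $\{\varphi : \varphi(K)\cap K\neq\emptyset\}$; closedness by checking that a $\PGL_{d+1}(\Rb)$-limit of automorphisms still preserves $\Omega$, using $\varphi(\Omega)\subset\operatorname{int}(\overline\Omega)=\Omega$ and the same for $\varphi^{-1}$) is sound.

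The one step you should not wave at is the passage from ``relatively compact in the compact-open topology on maps of $\Omega$'' to ``relatively compact in $\PGL_{d+1}(\Rb)$,'' which you assert in the properness paragraph but only establish in the weaker sense. Arzel\`a--Ascoli gives you a subsequence $\varphi_n$ converging uniformly on compacta of $\Omega$, but a priori the normalized matrix representatives $\wh\varphi_n$ with $\|\wh\varphi_n\|=1$ could degenerate to a singular $S\in\End(\Rb^{d+1})$, in which case there is no limit in $\PGL_{d+1}(\Rb)$. You need to rule this out: the argument is exactly the one the paper uses in the proof of Proposition~\ref{prop:compact} via Lemma~\ref{lem:cont} --- since the limit map is finite-valued (indeed an isometric embedding) on all of $\Omega$, one gets $[\ker S]\cap\Omega=\emptyset$; applying the same reasoning to $\varphi_n^{-1}$ (which satisfy the same hypothesis, as $\varphi(K)\cap K\neq\emptyset$ iff $\varphi^{-1}(K)\cap K\neq\emptyset$) produces a limit $S'$ with $[S'S]=\Id$ on the open set $\Omega$, whence $S'S$ is a nonzero scalar and $S$ is invertible, so $\varphi_n\to[S]$ in $\PGL_{d+1}(\Rb)$. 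With that inserted, the rest of your proof (closedness, Cartan's theorem, and the reduction of properness to the compactness of $\{\varphi:\varphi(K)\cap K\neq\emptyset\}$) goes through as written.
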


If $M=\Gamma \backslash \Omega$ is a proper convex real projective manifold and $\pi:\Omega \rightarrow M$ is the covering map then we can define 
\begin{align*}
d_{M}(p,q) = \inf \left\{ d_{\Omega}(\wt{p},\wt{q}) : \wt{p} \in \pi^{-1}(p) \text{ and } \wt{q} \in \pi^{-1}(q) \right\}.
\end{align*} 
Then $d_{M}$ will be a complete metric invariant under the projective automorphisms of $M$. Kobayashi proved the following distance decreasing property of the Hilbert metric:

 \begin{lemma}\cite{K1977}
 \label{lem:contract}
 Suppose $f: M_1 \rightarrow M_2$ is a projective map of two proper convex projective manifolds. Then 
 \begin{align*}
 d_{M_2}(f(p), f(q)) \leq d_{M_1}(p,q)
 \end{align*}
 for all $p,q \in M_1$.
\end{lemma}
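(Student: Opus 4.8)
The plan is to reduce the statement to the corresponding inequality for the Hilbert metrics on the universal covers $\Omega_1$ and $\Omega_2$, where it becomes a statement about cross ratios. First I would invoke Proposition~\ref{prop:proj_map}: if $f$ is constant the inequality is trivial, so assume $f$ is non-constant, lift $f$ to $\wt{f}:\Omega_1\to\Omega_2$, and let $T\in\Pb(\Lin(\Rb^{d_1+1},\Rb^{d_2+1}))$ be the projective map with $\wt{f}=T$ on $\Omega_1$. Since $d_{M_i}$ is defined as an infimum over lifts and any two lifts of $f$ differ by deck transformations (which act by $\Aut(\Omega_i)$-isometries), it suffices to prove
\begin{align*}
d_{\Omega_2}\big(T(\wt{p}),T(\wt{q})\big)\le d_{\Omega_1}(\wt{p},\wt{q})
\end{align*}
for all $\wt{p},\wt{q}\in\Omega_1$.

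The key step is then a one-dimensional computation. Fix distinct $\wt{p},\wt{q}\in\Omega_1$ and let $L=\overline{\wt{p}\wt{q}}$ be the projective line through them, with $a,\wt{p},\wt{q},b$ the cyclic order of the endpoints on $L\cap\partial\Omega_1$. The restriction of $T$ to $L$ is a projective map $L\to\Pb(\Rb^{d_2+1})$; because $\wt{f}(L\cap\Omega_1)\subset\Omega_2$ and $\Omega_2$ is a proper convex open set, this restriction cannot be constant (its image meets $\Omega_2$ and contains more than one point unless $\wt p,\wt q$ have the same image, a degenerate case one handles separately), so it is a projective isomorphism of $L$ onto a line $L'=\overline{T(\wt p)T(\wt q)}$. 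Projective isomorphisms preserve the cross ratio, so $[a,\wt p,\wt q,b]=[T(a),T(\wt p),T(\wt q),T(b)]$. Now $T(a),T(b)\in\overline{\Omega_2}$ lie on $L'$ and separate $T(\wt p),T(\wt q)$, while the two actual boundary points $a',b'$ of $L'\cap\partial\Omega_2$ (ordered $a',T(\wt p),T(\wt q),b'$) satisfy: $a'$ lies between $a''$-side and $T(\wt p)$, i.e. $T(a),a',T(\wt p),T(\wt q),b',T(b)$ along $L'$. The elementary monotonicity of the cross ratio in the outer two arguments then gives
\begin{align*}
[a',T(\wt p),T(\wt q),b']\le[T(a),T(\wt p),T(\wt q),T(b)]=[a,\wt p,\wt q,b],
\end{align*}
and taking logarithms yields $d_{\Omega_2}(T(\wt p),T(\wt q))\le d_{\Omega_1}(\wt p,\wt q)$.

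I expect the main obstacle to be the careful bookkeeping in the last step: one must verify that $T(a)$ and $T(b)$ really do lie in $\overline{\Omega_2}$ on the far side of the true boundary points $a',b'$ (this uses that $T$ maps the open segment $(\wt p,\wt q)$, and more generally $L\cap\Omega_1$, into $\Omega_2$, together with properness of $\Omega_2$ so that $L'\cap\Omega_2$ is a genuine bounded segment in an affine chart), and that the cross ratio is monotone in the claimed direction. A clean way to package this is to pass to an affine chart in which $\Omega_2$ is bounded and convex, parametrize $L'$ by an affine coordinate, and observe $[a,x,y,b]=\frac{|x-b||y-a|}{|x-a||y-b|}$ is increasing as $a$ moves away from $x$ and as $b$ moves away from $y$; since $a',b'$ are the outermost points of $L'\cap\partial\Omega_2$ and $T(a),T(b)$ lie in $\overline{\Omega_2}$ beyond them along $L'$, the inequality follows. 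The degenerate cases ($T(\wt p)=T(\wt q)$, or $\wt p=\wt q$) give $d_{\Omega_2}=0$ and are immediate.
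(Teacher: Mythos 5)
The paper itself does not prove this lemma --- it is quoted from Kobayashi with a reference to Goldman for a proof --- so I am comparing your argument with the standard one, which is indeed the reduction to a cross-ratio computation on a single line that you describe. The passage to the universal covers, the use of Proposition~\ref{prop:proj_map} (really just Lemma~\ref{lem:proj_univ}) to write $\wt f = T$, the observation that $T|_L$ is either constant (the degenerate case) or a projective isomorphism onto $L'=\overline{T(\wt p)T(\wt q)}$ preserving cross ratios, and your final displayed inequality
\begin{align*}
[a',T(\wt p),T(\wt q),b'] \le [T(a),T(\wt p),T(\wt q),T(b)] = [a,\wt p,\wt q,b]
\end{align*}
are all correct and are exactly what is needed.

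However, the two claims you use to justify that inequality are both false, and they happen to cancel. First, the ordering along $L'$ is not $T(a),a',T(\wt p),T(\wt q),b',T(b)$ but rather $a',T(a),T(\wt p),T(\wt q),T(b),b'$: since $T(a)\in\overline{\Omega_2}$ and $L'$ meets $\Omega_2$, one has $T(a)\in L'\cap\overline{\Omega_2}=[a',b']$, so $T(a)$ lies on the \emph{near} side of $a'$, between $a'$ and $T(\wt p)$, not beyond it. Your own (correct) premise that $T(a),T(b)\in\overline{\Omega_2}$ already contradicts the ordering you then write down; the right picture is that $T$ maps the chord $(a,b)$ of $\Omega_1$ into the chord $(a',b')$ of $\Omega_2$, so the image is a sub-segment. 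Second, the monotonicity goes the other way: with $a<x<y<b$ in an affine coordinate on the line, $\frac{\abs{y-a}}{\abs{x-a}}=1+\frac{\abs{y-x}}{\abs{x-a}}$, so $[a,x,y,b]$ \emph{decreases} as $a$ moves away from $x$ (equivalently, enlarging a convex set shrinks its Hilbert metric). With the correct ordering and the correct monotonicity the displayed inequality still follows, since $a'$ and $b'$ are farther from $T(\wt p)$ and $T(\wt q)$ than $T(a)$ and $T(b)$ are and hence give the smaller cross ratio; so the proof is salvageable by reversing both signs, but as written the key step rests on two compensating errors.
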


A proof of Lemma~\ref{lem:contract} can also be found in~\cite[Proposition 3.3]{G2009}. 

\subsection{The asymptotic geometry of the Hilbert metric} There are deep connections between the shape of $\partial \Omega$ and the asymptotic geometry of the Hilbert metric on $\Omega$ (see for instance~\cite{B2003a, B2004,C2014, KN2002}). The next two observations are well known but since the proofs are short we will include them. 

\begin{definition}
A \emph{line segment} in $\Pb(\Rb^{d+1})$ is a connected subset of a projective line. 
\end{definition}

  \begin{lemma}
  \label{lem:bd_behavior}
  Suppose $\Omega \subset \Pb(\Rb^{d+1})$ is a proper convex open set and $p_n \in \Omega$ is a sequence with $p_n \rightarrow p \in \partial \Omega$ in the ambient topology of $\Pb(\Rb^{d+1})$. If $q_n \in \Omega$ is a sequence such that $q_n \rightarrow q$ and $\lim_{n \rightarrow \infty} d_{\Omega}(p_n,q_n) < +\infty$  then either 
  \begin{enumerate}
  \item $p=q$ or 
  \item $p$ and $q$ are in the interior of a line segment in $\partial \Omega$. 
  \end{enumerate}
  \end{lemma}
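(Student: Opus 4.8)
The plan is to argue by contradiction on the cross-ratio formula defining $d_\Omega$. Suppose $p \neq q$ and that it is not the case that both $p$ and $q$ lie in the interior of a line segment contained in $\partial\Omega$. Choose an affine chart containing $\overline{\Omega}$ as a bounded convex set, so that all the relevant quantities become honest Euclidean distances. For each $n$ large, let $a_n, b_n \in \partial\Omega$ be the endpoints of the chord through $p_n$ and $q_n$, ordered $a_n, p_n, q_n, b_n$, so that
\begin{align*}
d_\Omega(p_n, q_n) = \log \frac{\abs{p_n - b_n}\,\abs{q_n - a_n}}{\abs{p_n - a_n}\,\abs{q_n - b_n}}.
\end{align*}
After passing to a subsequence we may assume $a_n \to a$ and $b_n \to b$ in $\partial\Omega$, with $a, p, q, b$ appearing in that (weak) order on a line segment; note $a \neq b$ since $p \neq q$ forces the chords to have length bounded below. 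Because $p_n \to p \in \partial\Omega$, the segment $[a,b]$ has $p$ as an interior point unless $p \in \{a, b\}$.

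First I would handle the case $p \notin \{a, b\}$: then $\abs{p_n - a_n} \to \abs{p - a} > 0$ and $\abs{p_n - b_n} \to \abs{p - b} > 0$, while $\abs{q_n - a_n}, \abs{q_n - b_n}$ converge to the finite positive or possibly zero values $\abs{q - a}, \abs{q - b}$. The only way $d_\Omega(p_n, q_n)$ can stay bounded while all four factors converge is automatic here — the real content is that we have produced a genuine segment $[a, b] \subset \partial\Omega$ (since $a, b \in \partial\Omega$ and $p$ is an interior point of $[a,b]$, convexity of $\overline{\Omega}$ forces $[a,b] \subset \partial\Omega$), and $q$ lies on it between $p$ and $b$; if $q$ is an endpoint, i.e. $q \in \{a, b\}$, one checks directly that $d_\Omega(p_n, q_n) \to +\infty$ because the corresponding factor $\abs{q_n - a_n}$ or $\abs{q_n - b_n}$ tends to $0$ while its "partner" does not, contradicting the finiteness hypothesis. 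Hence $q$ is interior to $[a,b]$, and since $p$ is too, conclusion~(2) holds. The case $p \in \{a, b\}$, say $p = a$: then $\abs{p_n - a_n} \to 0$. For $d_\Omega(p_n, q_n)$ to remain bounded we need $\abs{q_n - a_n} \to 0$ as well, i.e. $q = a = p$, which is conclusion~(1) — unless $\abs{p_n - b_n} \to 0$ too, but $\abs{p_n - b_n} \to \abs{p - b} = \abs{a - b} \geq \abs{a - q} > 0$ when $q \neq p$ and $q \neq b$; the remaining subcase $q = b$ again gives a blow-up via the $\abs{q_n - b_n}$ factor.

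The main obstacle is bookkeeping: making precise, in each configuration of where $p$ and $q$ sit relative to the limiting chord endpoints $a, b$, which of the four factors in the cross ratio tends to zero and whether it is "cancelled" by another factor tending to zero at a comparable rate. The clean way to organize this is to observe that boundedness of $d_\Omega(p_n,q_n)$ is equivalent to
\begin{align*}
\liminf_{n\to\infty} \frac{\abs{p_n - a_n}}{\abs{q_n - a_n}} > 0 \quad\text{and}\quad \liminf_{n\to\infty} \frac{\abs{q_n - b_n}}{\abs{p_n - b_n}} > 0,
\end{align*}
and then to note that $\abs{p_n - a_n} \to 0$ forces $\abs{q_n - a_n} \to 0$ (hence $a = q$, and by order $a = p = q$ unless the chord degenerates), while if $\abs{p_n - a_n} \not\to 0$ and $\abs{p_n - b_n} \not\to 0$ then $p$ is a bona fide interior point of a segment $[a,b] \subset \partial\Omega$ and the second inequality pins $q$ to the interior of that same segment. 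This reduces the whole argument to the single dichotomy "$p$ is or is not an endpoint of the limiting chord," which disposes of it cleanly.
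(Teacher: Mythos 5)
Your proof is correct and follows essentially the same route as the paper: write $d_\Omega(p_n,q_n)$ as a cross ratio, pass to subsequential limits $a_n \to a$, $b_n \to b$ of the chord endpoints, and use finiteness of the limit to force $p \neq a$ and $q \neq b$, which places $p$ and $q$ in the interior of the segment $[a,b] \subset \partial\Omega$. The paper's version is shorter only because the ordering $a,p,q,b$ together with $p \neq q$ immediately gives $a \neq q$ and $b \neq p$, which collapses most of your case bookkeeping into one line.
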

  
  \begin{proof}
Let $\{a_n,b_n\} = \partial \Omega \cap \overline{p_n q_n}$ labelled so that 
\begin{align*}
d_{\Omega}(p_n, q_n) = \log \frac{ \abs{p_n-b_n}\abs{q_n-a_n}}{\abs{p_n-a_n}\abs{q_n-b_n}}.
\end{align*}
We may suppose that $p \neq q$ (otherwise there is nothing to prove). By passing to a subsequence we can suppose that $a_n \rightarrow a$ and $b_n \rightarrow b$. Since $p \neq q$ we see that $a \neq q$ and $b \neq p$. Then 
\begin{align*}
\lim_{n \rightarrow \infty} d_{\Omega}(p_n, q_n) 
&=  \log\left( \abs{p-b}\abs{q-a}\right) + \lim_{n \rightarrow \infty} \log \frac{1}{\abs{p-a_n}\abs{q-b_n}} \\
\end{align*}
Since the limit is finite, we must have that $p \neq a$ and $q \neq b$ which implies that $p$ and $q$ are in the interior of a line segment with end points $a$ and $b$ in $\partial \Omega$.  
\end{proof}

\begin{definition}
Suppose $\Omega$ is a proper convex open set. The \emph{extreme points} of $\Omega$, denoted $\Ext(\Omega)$, are the points in $\partial \Omega$ which are not contained in the interior of a line segment in $\partial \Omega$. 
\end{definition}

\begin{remark} If $\Omega \subset \Pb(\Rb^{d+1})$ is a proper convex open set then there exists an affine chart of $\Pb(\Rb^{d+1})$ which contains $\Omega$ as a bounded convex open set. In this affine chart $\xi \in \Ext(\Omega)$ if and only if $\xi$ is an extreme point of $\overline{\Omega}$ in the usual sense.
\end{remark}

\begin{lemma}\label{lem:orb_ext}
Suppose $\Omega$ is a proper convex open set and $G \leq \Aut(\Omega)$ is a subgroup which acts co-compactly on $\Omega$.  Then $\Ext(\Omega) \subset \overline{G \cdot x}$ for every $x \in \Omega$. Moreover, if $G$ does not have any invariant proper projective subspaces then $\Ext(\Omega) \subset \overline{G \cdot x}$ for every $x \in \overline{\Omega}$
\end{lemma}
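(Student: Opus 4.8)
The plan is to obtain the first assertion directly from Lemma~\ref{lem:bd_behavior} and cocompactness, and then to bootstrap to points of $\overline{\Omega}$ using a convex-hull argument. Throughout I fix an affine chart in which $\overline{\Omega}$ is a compact convex body, so that ``convex hull'' and ``extreme point'' carry their usual affine meaning (as in the remark after the definition of $\Ext(\Omega)$). For the first assertion, fix $x \in \Omega$ and $\xi \in \Ext(\Omega)$. By cocompactness choose a compact set $K \subset \Omega$ with $G \cdot K = \Omega$ and put $D := \sup_{k \in K} d_{\Omega}(x,k) < \infty$. Pick a sequence $p_n \in \Omega$ with $p_n \to \xi$ and write $p_n = g_n \cdot k_n$ with $g_n \in G$ and $k_n \in K$. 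Since $G$ acts on $\Omega$ by $d_{\Omega}$-isometries (Proposition~\ref{prop:hilbert_basic}), $d_{\Omega}(g_n \cdot x, p_n) = d_{\Omega}(x,k_n) \le D$. After passing to a subsequence, $g_n \cdot x \to z$ for some $z \in \overline{\Omega}$ and $\lim_n d_{\Omega}(p_n, g_n \cdot x)$ exists and is at most $D$. Applying Lemma~\ref{lem:bd_behavior} to $p_n \to \xi \in \partial\Omega$ and $q_n := g_n \cdot x \to z$ forces either $z = \xi$ or $\xi$ to lie in the interior of a line segment in $\partial\Omega$; the latter is impossible because $\xi \in \Ext(\Omega)$, so $z = \xi$ and hence $\xi \in \overline{G \cdot x}$.

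For the second assertion, let $x \in \overline{\Omega}$ and set $C := \overline{G \cdot x}$, a compact $G$-invariant subset of $\overline{\Omega}$. Its projective span is a nonempty $G$-invariant projective subspace, so by the hypothesis on $G$ it equals $\Pb(\Rb^{d+1})$; equivalently $C$ affinely spans the chart, and therefore the compact convex set $\operatorname{conv}(C)$ has nonempty interior. Every $g \in \Aut(\Omega)$ maps $\overline{\Omega}$ onto $\overline{\Omega}$ and maps each line segment lying in $\overline{\Omega}$ onto a line segment lying in $\overline{\Omega}$, so $g$ preserves convex combinations of points of $\overline{\Omega}$ and hence $g(\operatorname{conv}(C)) = \operatorname{conv}(gC) = \operatorname{conv}(C)$; thus $\operatorname{conv}(C)$ is a compact $G$-invariant convex set. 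Since $\operatorname{conv}(C) \subseteq \overline{\Omega}$ and $\Omega = \operatorname{int}(\overline{\Omega})$, its interior is contained in $\Omega$; choose $y$ in this interior. By the first assertion, $\Ext(\Omega) \subseteq \overline{G \cdot y} \subseteq \operatorname{conv}(C)$, the last inclusion because $\operatorname{conv}(C)$ is closed, $G$-invariant and contains $y$. Now if $\xi \in \Ext(\Omega)$ then $\xi$ is an extreme point of $\overline{\Omega}$ and lies in $\operatorname{conv}(C) \subseteq \overline{\Omega}$, hence is an extreme point of $\operatorname{conv}(C)$; by Milman's converse to the Krein--Milman theorem the extreme points of $\operatorname{conv}(C)$ lie in $C$, so $\xi \in C = \overline{G \cdot x}$.

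The routine ingredients are the affine-chart bookkeeping and standard facts about convex bodies ($\Omega = \operatorname{int} \overline{\Omega}$, that a set affinely spanning $\Rb^d$ has convex hull with nonempty interior, and Milman's theorem). The step with real content is the second assertion: irreducibility is used exactly to guarantee that the orbit closure of a boundary point is large enough for its convex hull to meet $\Omega$, so that the first assertion can be reapplied; the one place to be careful is checking that $\operatorname{conv}(C)$ is $G$-invariant, i.e.\ that projective automorphisms preserving $\overline{\Omega}$ respect the convex structure of this fixed bounded model---after that the argument is short.
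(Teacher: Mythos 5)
Your proof is correct. The first assertion is argued exactly as in the paper (cocompactness plus Lemma~\ref{lem:bd_behavior} and extremality of $\xi$), but your proof of the ``moreover'' part takes a genuinely different route. The paper stays with the sequence $\varphi_n$ produced in the first part: it normalizes representatives $\wh{\varphi}_n \in \GL_{d+1}(\Rb)$ with $\norm{\wh{\varphi}_n}=1$, extracts a limit $S \in \End(\Rb^{d+1})$, observes that $\Imag(S)=\xi$ because $\varphi_n y \to \xi$ for all $y$ in the open set $\Omega$, and then uses irreducibility only to move a boundary point $x$ off $[\ker S]$ by some $\phi \in G$, so that $\varphi_n \phi x \to [S](\phi x) = \xi$. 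You instead form $C = \overline{G\cdot x}$, use irreducibility to see that $C$ projectively (hence affinely) spans, so $\operatorname{conv}(C)$ is a compact $G$-invariant convex body whose interior meets $\Omega$; the first assertion applied to an interior point puts $\Ext(\Omega)$ inside $\operatorname{conv}(C)$, and Milman's converse to Krein--Milman (extreme points of $\overline{\Omega}$ in $\operatorname{conv}(C)$ are extreme points of $\operatorname{conv}(C)$, hence lie in $C$) finishes. Both arguments are sound; yours isolates the role of irreducibility more transparently and avoids the matrix-limit computation, while the paper's version produces an explicit sequence in $G$ converging to $\xi$ at the given point, which is closer in spirit to how the lemma is used later. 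One small point of care in your write-up: a projective automorphism does not literally ``preserve convex combinations'' (barycentric coordinates are not projectively invariant); what is true, and what your argument actually needs, is that $g$ carries segments contained in the bounded set $\overline{\Omega}$ onto segments in $\overline{\Omega}$, whence $g(\operatorname{conv}(C)) = \operatorname{conv}(gC)$ for $C \subset \overline{\Omega}$. With that phrasing fixed the argument is complete.
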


\begin{remark} By Theorem~\ref{thm:irred}, if $M = \Gamma \backslash \Omega$ is a compact proper convex real projective manifold and $\Omega$ is irreducible then $\Gamma$ will not have any invariant proper projective subspaces.\end{remark}

\begin{proof}
Suppose that $\xi \in \Ext(\Omega)$ is an extreme point in $\partial \Omega$. Then there exists $p_n \in \Omega$ such that $p_n \rightarrow \xi$. Fix some $x_0 \in \Omega$, then we can find a sequence $\varphi_n \in G$ such that 
\begin{align*}
d_{\Omega}(\varphi_n x_0, p_n) \leq R
\end{align*}
for some $R < \infty$. Now for any $x \in \Omega$ we have 
\begin{align*}
d_{\Omega}(\varphi_n x, p_n) \leq d_{\Omega}(\varphi_n x, \varphi_n x_0) + d_{\Omega}(\varphi_n x_0, p_n) \leq d_{\Omega}(x,x_0)+R.
\end{align*}
And so Lemma~\ref{lem:bd_behavior} implies that $\varphi_n x \rightarrow \xi$. Since $\xi$ was an arbitrary extreme point, this completes the proof in the case in which $x \in \Omega$.

Now suppose that $G$ does not have any proper invariant projective subspaces. Let $\wh{\varphi}_n \in \GL_{d+1}(\Rb)$ be representatives of $\varphi_n$ such that $\norm{\wh{\varphi}_n}=1$. By passing to a subsequence we can suppose that $\wh{\varphi}_n \rightarrow S$ in $\End(\Rb^{d+1})$. Now if $x \in \Pb(\Rb^{d+1}) \setminus [\ker S]$ then $[S](x) = \lim_{n \rightarrow \infty} \varphi_n x$. But we know that $\varphi_n x \rightarrow \xi$ for all $x \in \Omega$. Since $\Omega$ is an open set this implies that $\Imag(S)=\xi$ (viewing $\xi$ as a line in $\Rb^{d+1}$).

Now suppose that $x \in \partial \Omega$. Since $G$ has no proper invariant projective subspaces there exists $\phi \in G$ such that $\phi x \notin [\ker S]$. Then $\varphi_n \phi x \rightarrow [S](x) = \xi$. Since $\xi$ was an arbitrary extreme point, this completes the proof.
\end{proof}

\subsection{The Hilbert metric on a convex cone}

Recall that a convex open cone $\Cc \subset \Rb^{d}$ is called proper if $\Cc$ does not contain any affine lines. If $\Cc \subset \Rb^d$ is a proper convex open cone and we view $\Rb^d$ as an affine chart of $\Pb(\Rb^{d+1})$ then $\Cc$ will be a proper convex open set of $\Pb(\Rb^{d+1})$. In particular, $\Cc$ has a Hilbert metric $d_{\Cc}$ which has all the properties established above. 

\section{Proof of Proposition~\ref{prop:compact}}

Suppose $M_1=\Gamma_1 \backslash \Omega_1$ and $M_2 = \Gamma_2 \backslash \Omega_2$ are compact proper convex real projective manifolds and $f_n : M_1 \rightarrow M_2$ is a sequence of projective maps. By Lemma~\ref{lem:contract} each $f_n$ is distance decreasing with respect to the Hilbert metric, so we may pass to a subsequence so that $f_n \rightarrow f$ in the compact-open topology. 

We claim that $f$ is projective. This follows immediately from the local version of the fundamental theorem of projective geometry, but we will provide a direct proof. First lift each $f_n$ to a projective map $\wt{f}_n : \Omega_1 \rightarrow \Omega_2$. By choosing the lifts correctly, we may assume that $\wt{f}_n \rightarrow \wt{f}$ where $\wt{f}$ is a lift of $f$. Now by Proposition~\ref{prop:proj_map} there exists $T_n \in \Pb(\Lin(\Rb^{d_1+1}, \Rb^{d_2+1}))$ so that $\wt{f}_n(p) = T_n(p)$ for all $p \in \Omega_1$. We can pick a representative $S_n \in \Lin(\Rb^{d_1+1}, \Rb^{d_2+1})$ of $T_n$ so that $\norm{S_n}=1$ and then pass to a subsequence so that $S_n \rightarrow S$ in $\Lin(\Rb^{d_1+1}, \Rb^{d_2+1})$. Then for $p \in \Omega_1 \setminus [\ker S]$ we have that $\wt{f}(p) = [S](p)$. By Lemma~\ref{lem:contract}
\begin{align*}
d_{\Omega_2}(\wt{f}(p), \wt{f}(q)) \leq d_{\Omega_1}(p,q)
\end{align*}
for all $p,q \in \Omega_1$. Since $\wt{f}$ and $[S]$ agree on $\Omega_1 \backslash [\ker S]$ we see that 
\begin{align*}
\{ x \in \Omega_1 : \lim_{y \rightarrow x} [S](x) \text{ does not exist}\} = \emptyset.
\end{align*}
So by Lemma~\ref{lem:cont} we see that $\Omega_1 \cap [\ker S]= \emptyset$ and hence that $\wt{f}$ is a projective map. Since $\wt{f}$ is a lift of $f$ we then see that $f$ is a projective map. 

Finally, since $M_2$ is compact each homotopy class is open in the space of continuous functions and so the set of projective maps from $M_1$ to $M_2$ can contain only finitely many distinct homotopy classes. 

\section{Proof of Theorem~\ref{thm:main_structure}}

Suppose $M_1=\Gamma_1 \backslash \Omega_1$ and $M_2 = \Gamma_2 \backslash \Omega_2$ are compact proper convex real projective manifolds and $f_0 : M_1 \rightarrow M_2$ is a projective map. As in Subsection~\ref{subsec:proj_maps}, we can lift $f_0$ to a map $T_0:\Omega_1 \rightarrow \Omega_2$ and $T_0$ is $\rho$-equivariant for some homomorphism $\rho: \Gamma_1 \rightarrow \Gamma_2$.

Let 
\begin{align*}
\Proj(\Omega_1, \Omega_2) \subset \Pb(\Lin(\Rb^{d_1+1}, \Rb^{d_2+1}))
\end{align*}
be the space of projective maps with $T(\Omega_1) \subset  \Omega_2$. Now $\Gamma_2$ acts on $\Proj(\Omega_1, \Omega_2)$ by $(\varphi \cdot T)(x) = \varphi \cdot (T(x))$. The action is proper and co-compact because the action of $\Gamma_2$ on $\Omega_2$ is proper and co-compact. 

Next let
\begin{align*}
\Proj(\Omega_1, \Omega_2)^{\rho}\subset \Proj(\Omega_1, \Omega_2)
\end{align*}
be the subset of $\rho$-equivariant projective maps. Given any $f \in [f_0]$ we can lift $f$ to a $\rho$-equivariant projective map $T:\Omega_1 \rightarrow \Omega_2$. Conversely any  $\rho$-equivariant projective map $T:\Omega_1 \rightarrow \Omega_2$ descends to a projective map $f: M_1 \rightarrow M_2$ which is homotopic to $f_0$. In particular we can identify $[f_0]$ with
\begin{align*}
G \backslash \Proj(\Omega_1,\Omega_2)^{\rho}
\end{align*}
where $G := \{ \gamma \in \Gamma_2 : \gamma \Proj(\Omega_1,\Omega_2)^{\rho} = \Proj(\Omega_1,\Omega_2)^{\rho}\}$. Since $f_0$ is non-constant, Proposition~\ref{prop:proj_map} implies that $\rho$ is non-trivial. Thus each $T \in \Proj(\Omega_1,\Omega_2)^{\rho}$ is non-constant and has rank at least two. Thus by Lemma~\ref{lem:rank2} the compact-open topology on $[f_0]$ coincides with the quotient topology on $G \backslash \Proj(\Omega_1,\Omega_2)^{\rho}$. 

Now for $\varphi \in \Gamma_2$ let $\rho_{\varphi} : \Gamma_1 \rightarrow \Gamma_2$ be the homomorphism $\rho_{\varphi}(\gamma) = \varphi \rho(\gamma) \varphi^{-1}$. Then 
\begin{align*}
\varphi \Prop(\Omega_1, \Omega_2)^{\rho} = \Prop(\Omega_1, \Omega_2)^{\rho_{\varphi}}
\end{align*}
so $G=C_{\Gamma_2}(\rho(\Gamma_1))$ and $[f_0]$ can actually be identified with 
\begin{align*}
C_{\Gamma_2}(\rho(\Gamma_1)) \backslash \Proj(\Omega_1,\Omega_2)^{\rho}.
\end{align*}

All of these observations reduce Theorem~\ref{thm:main_structure} and Corollary~\ref{cor:center} to:

\begin{proposition}\label{prop:center_2}
Suppose that $\Prop(\Omega_1,\Omega_2)^{\rho}$ contains at least two distinct maps. Then $\Proj(\Omega_1,\Omega_2)^{\rho}$ is a proper convex open set of positive dimension in some projective subspace $P$ of $\Pb(\Lin(\Rb^{d_1+1}, \Rb^{d_2+1}))$. Moreover, $C_{\Gamma_2}(\rho(\Gamma_1))$ acts co-compactly on $\Proj(\Omega_1,\Omega_2)^{\rho}$.
\end{proposition}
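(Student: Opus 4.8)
The plan is to analyze the set $\Proj(\Omega_1,\Omega_2)^\rho$ directly as a subset of the projective space $\Pb(\Lin(\Rb^{d_1+1},\Rb^{d_2+1}))$. First I would show that the affine span of the representatives: more precisely, if $T_1 \neq T_2$ are in $\Proj(\Omega_1,\Omega_2)^\rho$, pick lifts $S_1, S_2 \in \Lin(\Rb^{d_1+1},\Rb^{d_2+1})$ and consider the projective line (or higher-dimensional subspace) spanned by them. The key point is that any projective combination $[aS_1 + bS_2]$ that still maps $\Omega_1$ into $\Omega_2$ is automatically $\rho$-equivariant — this is because $\rho$-equivariance is a linear condition on $\Lin(\Rb^{d_1+1},\Rb^{d_2+1})$: the space $W := \{ S : S(\gamma x) = \rho(\gamma) S(x) \text{ for all } \gamma \in \Gamma_1, x\}$ is a linear subspace (using that $\Omega_1$ is open so equivariance on $\Omega_1$ forces equivariance everywhere, via Lemma~\ref{lem:rank2}). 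So I would set $P := \Pb(W)$ and show $\Proj(\Omega_1,\Omega_2)^\rho = \{[S] \in P : S(\Omega_1) \subset \Omega_2\}$.

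Next I would verify that this set is a proper convex open subset of $P$. Openness: if $T(\Omega_1) \subset \Omega_2$ and $T$ has rank $\geq 2$, then small perturbations within $W$ still map $\Omega_1$ into $\Omega_2$ — one needs $\overline{T(K)} \subset \Omega_2$ for compact $K \subset \Omega_1$ exhausting $\Omega_1$, together with $\rho$-equivariance to propagate the conclusion from a fundamental domain to all of $\Omega_1$; here compactness of $M_1 = \Gamma_1\backslash\Omega_1$ is essential. Convexity: fix a point $x_0 \in \Omega_1$ and an affine chart in which $\Omega_2$ is a bounded convex set; the condition "$S(x_0) \in \Omega_2$" for $[S]$ in a suitable affine chart of $P$ is a convex condition, and taking the intersection over all $x_0$ in a fundamental domain (it suffices to check on a fundamental domain by equivariance) gives convexity, after checking one can choose a consistent affine normalization — here I would use that all elements of $\Proj(\Omega_1,\Omega_2)^\rho$ have rank $\geq 2$ so the relevant linear functional (evaluation against a fixed covector nonvanishing on $\Omega_2$) is not identically zero. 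Properness of the convex set follows since $\Omega_2$ is proper: if a full projective line in $P$ were contained in $\Proj(\Omega_1,\Omega_2)^\rho$, evaluating at a single $x_0$ would give a full projective line inside $\overline{\Omega_2}$, contradicting properness. That the dimension is positive is immediate from having two distinct points in a convex open set.

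For the co-compactness statement: $\Gamma_2$ acts on $\Proj(\Omega_1,\Omega_2)$ properly and co-compactly (as already observed in the text, since $\Gamma_2$ acts properly and co-compactly on $\Omega_2$ — this uses the rank $\geq 2$ identification of the topology). The subgroup $C_{\Gamma_2}(\rho(\Gamma_1))$ is exactly the stabilizer of the subset $\Proj(\Omega_1,\Omega_2)^\rho$. I would then argue that a closed subgroup-invariant closed subset of a co-compact proper $\Gamma_2$-space, such that the subgroup is the full setwise stabilizer, inherits co-compactness — this is a standard fact but I should be careful: it follows because $\Proj(\Omega_1,\Omega_2)^\rho$ is itself closed in $\Proj(\Omega_1,\Omega_2)$ (it is cut out by the closed equivariance conditions), the $\Gamma_2$-action on the ambient space is co-compact, and for a proper co-compact action the orbit of any closed invariant subset $Y$ under the full group, intersected with a compact fundamental domain, is compact and surjects onto $\operatorname{Stab}(Y)\backslash Y$; combined with the fact that only finitely many translates $\varphi \cdot \Proj(\Omega_1,\Omega_2)^\rho$ meet the fundamental domain (by properness of the action), one gets the quotient is compact.

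The main obstacle I anticipate is the convexity argument: specifically, choosing a single affine chart of the projective subspace $P$ in which $\Proj(\Omega_1,\Omega_2)^\rho$ is simultaneously convex — one must produce a linear functional on $W$ that is strictly positive on the relevant lifts $S$, and verify it does not vanish, which is where $\rho$-equivariance and the rank $\geq 2$ condition have to be combined carefully. A secondary subtlety is making the openness argument uniform over the (non-compact) domain $\Omega_1$, which is handled by descending to the compact quotient $M_1$ via equivariance, but the bookkeeping needs care.
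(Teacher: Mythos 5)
Your outline for the ``proper convex open set'' half is essentially workable, but you are fighting a battle the paper avoids. The difficulty you single out as your main obstacle --- finding one affine chart of $P$ in which the set is simultaneously convex, via a linear functional positive on all the lifts --- is exactly what the cone construction of Section~\ref{sec:cones} is designed to eliminate. The paper replaces $\Proj(\Omega_1,\Omega_2)^{\rho}$ by the set $\Lin(\Cc_1,\Cc_2)^{\tau}$ of $\tau$-equivariant linear maps sending the cone $\Cc_1$ into the cone $\Cc_2$; there convexity is immediate from convexity of $\Cc_2$, properness is immediate from properness of $\Cc_2$, and the only real work is relative openness, which the paper gets from the uniform bound $d_{\Cc_2}(S_1(p),S_2(p))\le R$ of Lemma~\ref{lem:bd_dist} and a cross-ratio estimate showing the segment $[S_1,S_2]$ extends by $e^{-R}$ beyond both endpoints. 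Your alternative openness argument (perturb, check on a compact fundamental domain, propagate by equivariance) is fine. One point you gloss over: ``$\rho$-equivariance'' with $\rho$ valued in $\PGL$ is a linear condition on $S$ only after one fixes compatible linear representatives of $\Gamma_1$ and $\rho(\Gamma_1)$; the lifts $\Lambda_i$ and $\tau$ exist precisely to make your subspace $W$ well defined, so you cannot skip that step.

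The co-compactness argument contains a genuine gap. The ``standard fact'' you invoke --- that if $\Gamma$ acts properly and co-compactly on $X$ and $Y\subset X$ is closed with setwise stabilizer $H$, then $H\backslash Y$ is compact because only finitely many translates of $Y$ meet a compact fundamental domain --- is false. Take $X=\Rb^2$, $\Gamma=\Zb^2$ acting by translations, and $Y$ a line of irrational slope through the origin: the stabilizer is trivial, infinitely many distinct translates $Y+v$ meet the unit square, and $Y$ is not compact. Properness of the ambient action alone cannot give the conclusion; one must use the specific structure of $Y=\Proj(\Omega_1,\Omega_2)^{\rho}$. The paper's route is to identify $C_{\Gamma_2}(\rho(\Gamma_1))\backslash\Proj(\Omega_1,\Omega_2)^{\rho}$ with the homotopy class $[f_0]$ and then apply Proposition~\ref{prop:compact} together with openness of homotopy classes in $C(M_1,M_2)$: any sequence in $[f_0]$ subconverges to a projective map that is eventually homotopic to every term, hence lies in $[f_0]$. (Your finiteness-of-translates claim is in fact true in this situation, but proving it requires an argument of the same flavor --- if $\varphi_n T_n\to T_\infty$ with each $T_n$ $\rho$-equivariant, then the conjugates $\varphi_n\rho\varphi_n^{-1}$ eventually stabilize because $\Gamma_2$ is discrete and $\Gamma_1$ is finitely generated --- not the bare properness of the action.)
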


The Proposition will follow from a series of Lemmas. 

\begin{lemma} The space
\begin{align*}
[f_0] = C_{\Gamma_2}(\rho(\Gamma_1)) \backslash \Proj(\Omega_1,\Omega_2)^{\rho}
\end{align*}
is compact.
\end{lemma}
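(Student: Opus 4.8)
The plan is to exhibit $[f_0]$ as a closed subset of the compact space $\Proj(M_1,M_2)$ furnished by Proposition~\ref{prop:compact}. Recall that the discussion preceding this lemma identified $C_{\Gamma_2}(\rho(\Gamma_1)) \backslash \Proj(\Omega_1,\Omega_2)^{\rho}$ homeomorphically with the subset $\{ f \in \Proj(M_1,M_2) : f \sim f_0\}$ of $\Proj(M_1,M_2)$ in the compact-open topology; this used that $\rho$ is non-trivial (Proposition~\ref{prop:proj_map}) together with Lemma~\ref{lem:rank2}. So it suffices to prove that this subset is compact.

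First I would invoke Proposition~\ref{prop:compact} to get that $\Proj(M_1,M_2)$ is compact in the compact-open topology. Next, since $M_2$ is a compact manifold, every homotopy class of continuous maps $M_1 \to M_2$ is open in $C(M_1,M_2)$ — two uniformly close maps into a manifold are homotopic — and this is precisely the fact already used at the end of the proof of Proposition~\ref{prop:compact}. Being open, each homotopy class is also closed, since its complement is the union of the remaining homotopy classes. Hence $\{ f \in \Proj(M_1,M_2) : f \sim f_0 \}$ is the intersection of the compact space $\Proj(M_1,M_2)$ with a clopen set, so it is closed in $\Proj(M_1,M_2)$ and therefore compact.

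Equivalently, one can argue sequentially: given $f_n$ homotopic to $f_0$ and projective, Proposition~\ref{prop:compact} lets us pass to a subsequence with $f_n \to f$ uniformly for some $f \in \Proj(M_1,M_2)$; since $M_2$ is a manifold, $f_n$ is homotopic to $f$ once $n$ is large, so $f \sim f_0$, i.e. $f$ lies in $[f_0]$.

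There is essentially no obstacle here: the substance is already contained in Proposition~\ref{prop:compact}, and the only supplementary input is the openness of homotopy classes for maps into a compact manifold, which is standard and was invoked earlier. The single point deserving care is that the identification of $[f_0]$ with the double-coset space is a homeomorphism for the relevant topologies, which was established in the paragraphs preceding the lemma.
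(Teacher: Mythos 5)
Your proof is correct and takes essentially the same approach as the paper: the sequential argument in your third paragraph is exactly the paper's proof, namely extracting a convergent subsequence via Proposition~\ref{prop:compact} and using openness of homotopy classes for maps into the compact manifold $M_2$. The clopen reformulation in your second paragraph is an equivalent repackaging of the same idea.
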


\begin{proof}
Given a sequence $f_n \in [f_0]$ we can use Proposition~\ref{prop:compact} to pass to a subsequence so that $f_n \rightarrow f$ where $f:M_1 \rightarrow M_2$ is projective map. Since $M_2$ is compact each homotopy class is open, so for $n$ large $f_n$ is homotopic to $f$. Thus $f \in [f_0]$. Since $f_n$ was an arbitrary sequence in $[f_0]$ we see that $[f_0]$ is compact. 
\end{proof}

It remains to show that $\Proj(\Omega_1,\Omega_2)^{\rho}$ is a proper convex open set of positive dimension in some projective subspace $P$ of $\Pb(\Lin(\Rb^{d_1+1}, \Rb^{d_2+1}))$. To establish this it will be helpful to work in the affine world. 

Let $\Cc_1$ and $\Cc_2$ be proper convex open cones above $\Omega_1$ and $\Omega_2$ respectively. As in Section~\ref{sec:cones} we can lift each $\Gamma_i$ to a discrete group $\Lambda_i \leq \Aut(\Cc_i)$ which acts freely, properly discontinuously, and co-compactly on $\Cc_i$. We can also lift $\rho$ to a homomorphism $\tau:\Lambda_1 \rightarrow \Lambda_2$ and $T_0$ to a linear map $S_0:\Cc_1 \rightarrow \Cc_2$ which is $\tau$-equivariant.

Now let $\Lin(\Cc_1, \Cc_2)^{\tau}$ be the set of $\tau$-equivariant linear maps with $S(\Cc_1) \subset \Cc_2$. Then there is a natural map 
\begin{align*}
\pi: \Lin(\Cc_1, \Cc_2)^{\tau} \rightarrow \Proj(\Omega_1, \Omega_2)^{\rho}.
\end{align*}
Clearly any map $T \in \Proj(\Omega_1, \Omega_2)^{\rho}$ can be lifted to a map $S \in \Lin(\Cc_1, \Cc_2)^{\tau}$ and so $\pi$ is onto. 

\begin{lemma}\label{lem:bd_dist}
If $S_1, S_2 \in \Lin(\Cc_1, \Cc_2)^{\tau}$ then there exists $R \geq 0$ so that 
\begin{align*}
d_{\Cc_2}( S_1(x), S_2(x)) \leq R
\end{align*}
for all $x \in \Cc_1$.
\end{lemma}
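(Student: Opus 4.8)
The plan is to exploit the $\tau$-equivariance of $S_1$ and $S_2$ together with the co-compactness of $\Lambda_1$ on $\Cc_1$. First I would fix a compact set $K \subset \Cc_1$ such that $\Lambda_1 \cdot K = \Cc_1$; such a set exists since $\Lambda_1$ acts co-compactly. On the compact set $K$ the function $x \mapsto d_{\Cc_2}(S_1(x), S_2(x))$ is continuous (both $S_1$ and $S_2$ are continuous maps into $\Cc_2$, and $d_{\Cc_2}$ generates the standard topology on $\Cc_2$ by Proposition~\ref{prop:hilbert_basic}), hence bounded: say $d_{\Cc_2}(S_1(x), S_2(x)) \leq R$ for all $x \in K$.

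Now let $x \in \Cc_1$ be arbitrary. Write $x = \phi \cdot y$ for some $\phi \in \Lambda_1$ and $y \in K$. Using $\tau$-equivariance of $S_1$ and $S_2$, namely $S_i(\phi \cdot y) = \tau(\phi) \cdot S_i(y)$, and the fact that $\tau(\phi) \in \Lambda_2 \leq \Aut(\Cc_2)$ acts by an isometry of the Hilbert metric $d_{\Cc_2}$ (again Proposition~\ref{prop:hilbert_basic}), I get
\begin{align*}
d_{\Cc_2}(S_1(x), S_2(x)) = d_{\Cc_2}\big( \tau(\phi) S_1(y), \tau(\phi) S_2(y) \big) = d_{\Cc_2}(S_1(y), S_2(y)) \leq R.
\end{align*}
Since $x$ was arbitrary, this is the desired uniform bound.

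There is essentially no obstacle here: the only minor point to check is that $S_i(x)$ genuinely lies in $\Cc_2$ (not merely $\overline{\Cc_2}$) for every $x \in \Cc_1$, so that $d_{\Cc_2}(S_1(x), S_2(x))$ is finite to begin with — but this is exactly the defining property of $\Lin(\Cc_1,\Cc_2)^\tau$, whose elements satisfy $S(\Cc_1) \subset \Cc_2$. So the continuity-plus-equivariance argument goes through directly; the substance of the lemma is entirely in the co-compactness of the $\Lambda_1$-action, which was established in Section~\ref{sec:cones}.
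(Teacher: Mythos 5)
Your proof is correct and follows exactly the argument the paper intends: the paper's own proof is the one-line assertion that the lemma "follows from the fact that $\Lambda_1$ acts co-compactly on $\Cc_1$ and the fact that both maps are $\tau$-equivariant," and you have simply filled in the standard details (bounding the continuous function on a compact fundamental set, then propagating the bound by the $d_{\Cc_2}$-isometries $\tau(\phi) \in \Lambda_2 \leq \Aut(\Cc_2)$).
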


\begin{proof}
This follows from the fact that $\Lambda_1$ acts co-compactly on $\Cc_1$ and the fact that both maps are $\tau$-equivariant. 
\end{proof}

\begin{lemma} If there exists two distinct $T_1, T_2 \in \Proj(\Omega_1, \Omega_2)^{\rho}$ then $\Proj(\Omega_1, \Omega_2)^{\rho}$ is a proper convex open set of positive dimension in some projective subspace $P$ of $\Pb(\Lin(\Rb^{d_1+1}, \Rb^{d_2+1}))$.
\end{lemma}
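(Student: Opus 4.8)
The plan is to pass to the affine picture of Subsection~\ref{subsec:linear} and exhibit $\Proj(\Omega_1,\Omega_2)^{\rho}$ as the projectivization of a proper open convex cone. Let $W \subseteq \Lin(\Rb^{d_1+1},\Rb^{d_2+1})$ be the linear subspace of $\tau$-equivariant linear maps and set
\begin{align*}
\Kc := \Lin(\Cc_1,\Cc_2)^{\tau} = \{ S \in W : S(\Cc_1) \subseteq \Cc_2 \}.
\end{align*}
Since $\Cc_2$ is a convex cone, $\Kc$ is a convex cone in $W$, and $\Kc \neq \emptyset$ because $S_0 \in \Kc$. First I would check that $\Kc$ is open in $W$: using the cocompactness of the action of $\Lambda_1$ on $\Cc_1$, choose a compact set $K \subseteq \Cc_1$ with $\Lambda_1 \cdot K = \Cc_1$; then any $S \in \Kc$ carries the compact set $K$ into the open set $\Cc_2$, hence so does every $S'$ in a small neighborhood of $S$ in $W$, and $\tau$-equivariance upgrades $S'(K) \subseteq \Cc_2$ to $S'(\Cc_1) \subseteq \Cc_2$. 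Next I would check that $\Kc$ contains no line through the origin: if $S$ and $-S$ both lay in $\Kc$, then $\pm S(x) \in \Cc_2$ for every $x \in \Cc_1$, and since $\Cc_2$ is proper this forces $S(x) = 0$ for all $x \in \Cc_1$, hence $S = 0$ as $\Cc_1$ spans $\Rb^{d_1+1}$. Thus $\Kc$ is a nonempty proper open convex cone in $W$.

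Second, I would projectivize $\Kc$. A nonempty proper open convex cone admits a linear functional that is strictly positive on its closure away from the origin — the closure is pointed because $\Kc$ is a proper cone, so the dual cone has nonempty interior — and with respect to the associated affine chart of $\Pb(W)$ the set $\{[S] : S \in \Kc\}$ appears as a bounded convex open set. Hence $P := \Pb(W)$, which is a projective subspace of $\Pb(\Lin(\Rb^{d_1+1},\Rb^{d_2+1}))$, contains $\{[S] : S \in \Kc\}$ as a proper convex open subset. This subset has positive dimension: since $T_1 = [S_1] \neq [S_2] = T_2$, the maps $S_1,S_2$ are linearly independent, so $\dim W \geq 2$ and $\dim P \geq 1$.

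Finally, I would identify $\Proj(\Omega_1,\Omega_2)^{\rho}$ with $\{[S] : S \in \Kc\}$. The map $\pi : \Kc \to \Proj(\Omega_1,\Omega_2)^{\rho}$ is onto; for $S \in \Kc$ the induced projective map $[S]$ is $\rho$-equivariant because $\tau$ lifts $\rho$ and $\Lambda_i$ lifts $\Gamma_i$; and conversely every $T \in \Proj(\Omega_1,\Omega_2)^{\rho}$ admits a lift in $\Kc$. Since the fibers of $\pi$ are exactly the rays of $\Kc$, which are precisely the equivalence classes collapsed in $\Pb(W)$, we conclude $\Proj(\Omega_1,\Omega_2)^{\rho} = \{[S] : S \in \Kc\}$, which by the previous paragraph is a proper convex open set of positive dimension inside $P$.

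I expect the delicate points to be the openness of $\Kc$ in $W$ — which is exactly where properness of $M_1$, via cocompactness of $\Lambda_1$ acting on $\Cc_1$, is used — together with the standard but slightly technical passage from a proper open convex cone to a proper convex open set after projectivizing.
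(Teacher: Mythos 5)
Your overall strategy is the same as the paper's: pass to the cones, show that the set $\Kc = \Lin(\Cc_1,\Cc_2)^{\tau}$ of equivariant linear lifts is a proper open convex cone in a linear subspace, and projectivize. Your openness argument (a compact set $K$ with $\Lambda_1 \cdot K = \Cc_1$, stability of $S(K) \subset \Cc_2$ under small perturbations, then equivariance to propagate to all of $\Cc_1$) is a clean alternative to the paper's route, which instead proves convexity and openness simultaneously along segments by a quantitative Hilbert-metric estimate: cocompactness gives $d_{\Cc_2}(S_1(x),S_2(x)) \leq R$ uniformly, whence the segment $[S_1,S_2]$ extends by $e^{-R}$ beyond each endpoint while staying in $\Kc$. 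Your version is arguably more transparent and also shows directly that $\Kc$ is open in the full space of $\tau$-equivariant maps, so that its span is that whole space.

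There is, however, a genuine gap in your properness step. What the conclusion requires is that $\Kc$ contain no \emph{affine} lines (this is the paper's definition of a proper cone, and it is exactly what is needed for $\overline{\Kc}$ to be pointed and hence for the projectivization to sit as a bounded convex set in an affine chart of $\Pb(W)$). You only verify that $S$ and $-S$ cannot both lie in $\Kc$, i.e.\ $\Kc \cap (-\Kc) = \emptyset$, and this is strictly weaker: the upper half-plane $\{(x,y) : y > 0\} \subset \Rb^2$ is an open convex cone with $C \cap (-C) = \emptyset$ that contains affine lines, and its projectivization is $\Pb(\Rb^2)$ minus a point, which is not a proper convex set. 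So the sentence ``the closure is pointed because $\Kc$ is a proper cone'' rests on something you have not shown. The fix is short and is the argument the paper actually uses: if $\Kc$ contained an affine line $\{S + tB : t \in \Rb\}$ with $B \neq 0$, pick $x \in \Cc_1$ with $B(x) \neq 0$ (possible since $\Cc_1$ is open and hence spans $\Rb^{d_1+1}$); then $\{S(x) + tB(x) : t \in \Rb\}$ would be an affine line contained in $\Cc_2$, contradicting the properness of $\Cc_2$. With that substitution the rest of your argument goes through.
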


\begin{proof}
With the notation above, let $V \leq \Lin(\Rb^{d_1+1}, \Rb^{d_2+1})$ be the smallest linear subspace which contains $\Lin(\Cc_1, \Cc_2)^{\tau}$. It is enough to show that $\Lin(\Cc_1, \Cc_2)^{\tau}$ is an open proper convex cone in $V$.

Notice that $\Lin(\Cc_1, \Cc_2)^{\tau}$ cannot contain any affine lines because $\Cc_2$ is proper. Moreover, $\Lin(\Cc_1, \Cc_2)^{\tau}$ is closed under scalar multiplication by a positive number. Thus we only need to show that $\Lin(\Cc_1, \Cc_2)^{\tau}$ is an open convex subset of $V$. To establish this it is enough to prove: for any $S_1, S_2 \in \Lin(\Cc_1, \Cc_2)^{\tau}$ there exists $\epsilon >0$ such that 
\begin{align*}
\lambda S_1 + (1-\lambda)S_2 \in \Lin(\Cc_1, \Cc_2)^{\tau}
\end{align*}
for every $\lambda \in (-\epsilon, 1+\epsilon)$. 

So suppose that $S_1, S_2 \in \Lin(\Cc_1, \Cc_2)^{\tau}$. Since $\Cc_2$ is open and convex, for any $p \in \Cc_1$ there exists a maximal $\delta(p) \in (0,\infty]$ such that 
\begin{align*}
\lambda S_1(p) + (1-\lambda)S_2(p) \in \Cc_2
\end{align*}
for $\lambda \in (-\delta(p), 1+\delta(p))$.

We claim that $\delta(p)$ is bounded from below. By Lemma~\ref{lem:bd_dist} there exists $R \geq 0$ such that 
\begin{align*}
d_{\Cc_2}( S_1(p), S_2(p)) \leq R
\end{align*}
for all $p \in \Cc_1$. Now let $p \in \Cc_1$ and $\{a, b\} = \partial \Cc_2 \cap \overline{S_1(p)S_2(p)}$ ordered $a, S_1(p), S_2(p), b$ along the line $\overline{S_1(p)S_2(p)}$ (here it is possible for one of $a$ or $b$ to be $\infty$). Then
\begin{align*}
R \geq 
& d_{\Cc_2}(S_1(p),S_2(p)) =  \log \frac{ \abs{S_1(p)-b}\abs{S_2(p)-a}}{\abs{S_1(p)-a}\abs{S_2(p)-b}} \\
& = \log \frac{ \abs{S_1(p)-b}}{\abs{S_2(p)-b}}+\log \frac{\abs{S_2(p)-a}}{\abs{S_1(p)-a}}.
\end{align*}
We may assume that $S_1(p) \neq S_2(p)$ because otherwise $\delta(p) = \infty$. By possibly relabeling $S_1$ and $S_2$ we can also assume that $\delta(p) \abs{S_1(p)-S_2(p)} = \abs{S_1(p)-a}$. Then 
\begin{align*}
R \geq d_{\Cc_2}(S_1(p),S_2(p)) \geq  \log \frac{\abs{S_2(p)-a}}{\abs{S_1(p)-a}} \geq \log \frac{\abs{S_2(p)-S_1(p)}}{\abs{S_1(p)-a}} \geq \log \frac{1}{\delta(p)}.
\end{align*}
Hence $\delta(p) \geq e^{-R}$. 

Thus $S = \lambda S_1 + (1-\lambda)S_2$ is in $ \Lin(\Cc_1, \Cc_2)^{\tau}$ for every $\lambda \in (-e^{-R}, 1+e^{-R})$. 
\end{proof}

\section{Proof of Proposition~\ref{prop:factor} and Theorem~\ref{thm:prod_or_aut}}

We begin by proving Proposition~\ref{prop:factor} whose statement we recall:

\begin{proposition}\label{prop:factor_2}
Suppose that $f_0: M_1 \rightarrow M_2$ is a non-constant projective map between two compact proper convex real projective manifolds. Then there exists a compact proper convex real projective manifold $N$ and a surjective projective map $p:M_1 \rightarrow N$ with the following property: if $f \in [f_0]$ then there exists a locally injective projective map $\overline{f}:N \rightarrow M_2$ with $f = \overline{f} \circ p$.
\end{proposition}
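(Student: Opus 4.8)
Following Section~\ref{subsec:linear} I would fix identifications $M_i=\Gamma_i\backslash\Omega_i$, lift $f_0$ to $T_0\in\Proj(\Omega_1,\Omega_2)^{\rho}$ and, passing to cones, to $S_0\in\Lin(\Cc_1,\Cc_2)^{\tau}$. The manifold $N$ will be the quotient of $\Omega_1$ (equivalently of $\Cc_1$) by the projective subspace cut out by the kernel of $S_0$, and $p$ will be the induced surjective projective map. The entire content of the proposition is the assertion that this single $p$ works for all of $[f_0]$, i.e.\ that \emph{every} lift of \emph{every} $f\in[f_0]$ has the same kernel; once that is known, the factorization $f=\overline f\circ p$ with $\overline f$ locally injective is essentially the linear statement $T=I\circ S$ with $I$ injective.

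\textbf{The common kernel.} By Section~7 (Proposition~\ref{prop:center_2} and the lemmas following it), either $[f_0]=\{f_0\}$, which I handle at the end, or $\mathcal S:=\Lin(\Cc_1,\Cc_2)^{\tau}$ is a proper convex open cone of positive dimension in $V:=\Span\mathcal S$. Every element of $\mathcal S$ has rank at least two, and by Proposition~\ref{prop:proj_map} each $[\ker S]$ is $\Gamma_1$-invariant with $[\ker S]\cap\Omega_1=\emptyset$. Since the rank is lower semicontinuous and, along any line, constant off a finite set, a short argument with the convexity and openness of $\mathcal S$ gives: (i) all rank-maximal (``generic'') $S\in\mathcal S$ have one and the same kernel $W$; and (ii) $W\subseteq\ker S$ for every $S\in\mathcal S$. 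As an intersection of $\Lambda_1$-invariant subspaces, $W$ is $\Lambda_1$-invariant.

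\textbf{Key Lemma: $\ker S=W$ for all $S\in\mathcal S$.} Quotienting $\Rb^{d_1+1}$ by $W$ (this replaces $\Cc_1$ by its image, which is a proper convex open cone because $S_0$ factors as an injection into the proper cone $\Cc_2$, and replaces $\Lambda_1$ by a group still acting cocompactly) I may assume $W=0$, so $\mathcal S$ contains an injective map $S_0$, and I must show every $S_1\in\mathcal S$ is injective. Suppose $v\in\ker S_1$, $v\neq 0$. Two inputs: Lemma~\ref{lem:bd_dist} gives $R$ with $d_{\Cc_2}(S_0(x),S_1(x))\le R$ for all $x\in\Cc_1$; and Theorem~\ref{thm:irred} either makes $\Lambda_1$ irreducible on $\Rb^{d_1+1}$—forcing $\ker S_1=0$ outright—or splits $\Rb^{d_1+1}=\bigoplus V^{(j)}$, $\Cc_1=\sum\Cc^{(j)}$ with each $\Cc^{(j)}\subseteq\overline{\Cc_1}$ and $[\Cc^{(j)}]\cap\Omega_1=\emptyset$. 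In the reducible case, using $\ker S_1\cap V^{(j)}\in\{0,V^{(j)}\}$ I would locate a direction $v'\in\ker S_1\cap\overline{\Cc_1}$, i.e.\ a recession direction of $\Cc_1$; then along the ray $x_t=x_0+tv'$ in $\Cc_1$ the map $S_1$ is constant while $[S_0(x_t)]\to[S_0(v')]\in\overline{\Omega_2}$, and once one knows $[S_0(v')]\in\partial\Omega_2$, Lemma~\ref{lem:bd_behavior} applied to $p_n=[S_0(x_{t_n})]$ and the constant sequence $q_n=[S_1(x_{t_n})]$ contradicts the uniform bound $d(p_n,q_n)\le R$, because a boundary point and an interior point of $\Omega_2$ can be neither equal nor jointly interior to a segment of $\partial\Omega_2$.

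\textbf{Assembling $N$, $p$, $\overline f$, and the obstacle.} Granting the Key Lemma, let $W$ be the common kernel, $E:=\Rb^{d_1+1}/W$ with quotient $q$, $\Cc_N:=q(\Cc_1)$, $\Omega_N:=[\Cc_N]$, and $\Gamma_N$ the image of $\Gamma_1$ in $\Aut(\Omega_N)$. Then $\Cc_N$ is open, convex and proper, and $\Gamma_N$ is discrete and acts freely, properly discontinuously and cocompactly on $\Omega_N$ (for freeness: a fixed point of $\gamma_N$ in $\Omega_N$ forces, via $T_0=\overline{T_0}\circ q$ and the freeness of $\Gamma_2$ on $\Omega_2$, that $\gamma$ acts trivially on $\Omega_N$), so $N:=\Gamma_N\backslash\Omega_N$ is a compact proper convex real projective manifold and $p\colon M_1\to N$ induced by $q$ is a surjective projective map; the same construction with $W=\ker T_0$ covers the case $[f_0]=\{f_0\}$. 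For $f\in[f_0]$ with lift $T$, the Key Lemma gives $\ker T=W=\ker q$, so $T=\overline T\circ q$ with $\overline T$ injective, carrying $\Omega_N$ into $\Omega_2$ and $\rho$-equivariant for the $\Gamma_N$-action, hence descending to a projective $\overline f\colon N\to M_2$ with $f=\overline f\circ p$; and $\overline f$ is locally injective because $\overline T$, having trivial kernel, is injective on all of $\Pb(E)$. The hard part is the Key Lemma, and inside it the reducible case: producing the recession direction $v'\in\ker S_1$ (in particular controlling the possibility that $\ker S_1$ is a ``diagonal'' across isomorphic Vey factors, so a priori meeting $\overline{\Cc_1}$ only at $0$) and verifying $[S_0(v')]\in\partial\Omega_2$—the latter reducing to the statement that a properly convex domain with a cocompact group action admits no proper invariant convex open subdomain on which the same group still acts cocompactly.
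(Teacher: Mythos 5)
Your overall architecture is the same as the paper's: set $N$ equal to the quotient of $\Omega_1$ (resp.\ $\Cc_1$) by the kernel of the lift $S_0$, and reduce everything to the claim that every $\tau$-equivariant lift of every $f\in[f_0]$ has kernel exactly $\ker S_0$. But your proof of that claim has a genuine gap, and it sits exactly where you say it does. The paper closes it by applying Vey's theorem (Theorem~\ref{thm:irred}) not to an abstract invariant subspace but to $U=\ker S_0$ itself: since $U$ is $\Lambda_1$-invariant, one gets a decomposition $\Rb^{d_1+1}=U\oplus W$ with proper convex cones $\Cc_U\subset U$, $\Cc_W\subset W$ and $\Cc_1=\Cc_U+\Cc_W$. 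This is precisely the statement that $\ker S_0$ is \emph{spanned by recession directions of $\Cc_1$} (the sub-cone $\Cc_U$), which is the fact you were unable to produce; it rules out the ``diagonal across isomorphic factors'' scenario by citation to Vey rather than by a convexity argument. Once one has $\Cc_U$, the argument is the one you sketch, but run in the opposite direction and entirely inside the affine cone: for $v\in\Cc_U$ and any other lift $S$, the ray $p+tv$ stays in $\Cc_1$, so $d_{\Cc_2}(S(p)+tS(v),S_0(p))\le R$ for all $t>0$; if $S(v)\ne 0$ the points $S(p)+tS(v)$ leave every compact subset of $\Cc_2$, contradicting properness of $d_{\Cc_2}$. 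Hence $\ker S_0\subset\ker S$, and since $\ker S$ is also $\Lambda_1$-invariant the roles can be swapped to get equality. Note that this disposes of your second worry for free: you never need $[S_0(v')]\in\partial\Omega_2$, because the contradiction comes from escape to infinity in the cone, not from boundary behaviour in $\Omega_2$.

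Two further points. First, your preliminary reduction — that all rank-maximal elements of the open convex cone $\Lin(\Cc_1,\Cc_2)^{\tau}$ share a kernel $W$, by ``convexity and openness'' alone — is not sound as a general argument: an open convex family of rank-one maps $x\mapsto \ell(x)\,w$ with $\ell$ varying has constant rank and varying kernel. The common kernel in this problem comes from equivariance (via the Vey decomposition and the bounded-distance estimate), not from convexity, and the paper accordingly never introduces the generic-kernel intermediate step or Proposition~\ref{prop:center_2}; it compares each lift $S$ directly with $S_0$. Second, your verification that $N$ is a genuine compact manifold (discreteness, freeness, torsion-freeness of $\Gamma_W$, using injectivity of $\overline{S}_0$ and freeness of $\Gamma_2$) matches the paper's in substance — the paper routes it through the discrete torsion-free group $G=\{\varphi|_V:\varphi\in\Lambda_2,\ \varphi(V)=V\}$ and the Brouwer fixed-point lemma — so that part is fine. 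The missing ingredient is solely the compatibility of the invariant subspace $\ker S$ with the cone decomposition, and without it the key lemma, and hence the proposition, is not proved.
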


We will need one lemma:

\begin{lemma}
Suppose $\Omega \subset \Pb(\Rb^d)$ is a proper convex open set and $G \leq \Aut(\Omega)$ is a subgroup. If $G$ acts freely on $\Omega$ then $G$ is torsion free. 
\end{lemma}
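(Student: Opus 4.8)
The plan is to produce a fixed point for any torsion element by a linear averaging argument carried out in a cone over $\Omega$. Suppose for contradiction that $g \in G$ is non-trivial with $g^n = \Id$ for some $n \geq 2$. Since $\Omega$ is a proper convex open set, the preimage of $\Omega$ under $\Rb^d \setminus \{0\} \rightarrow \Pb(\Rb^d)$ has two components $\Cc$ and $-\Cc$, each a proper convex open cone, exactly as in Section~\ref{sec:cones}. The element $g$ lifts to some $\pm \wh{g} \in \SL_d^{\pm}(\Rb)$, and since $\wh g(\Cc)$ is a connected component of the preimage of $\Omega$, after possibly replacing $\wh g$ by $-\wh g$ we may assume $\wh g(\Cc) = \Cc$.

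Next I would pin down the order of $\wh g$. Because $g^n = \Id$, we have $\wh g^{\, n} = \lambda \Id$ for some $\lambda \in \Rb \setminus \{0\}$. Since $\wh g$ preserves $\Cc$, so does $\lambda \Id$; but if $\lambda < 0$ then $\Cc$ would contain both $v$ and $\lambda v$ for $v \in \Cc$, and convexity together with the cone property would force the whole line $\Rb v$ into $\Cc$, contradicting that $\Cc$ is proper. Hence $\lambda > 0$, and replacing $\wh g$ by $\lambda^{-1/n} \wh g$ we may assume $\wh g^{\, n} = \Id$ while still $\wh g(\Cc) = \Cc$. Thus $\langle \wh g \rangle$ is a finite cyclic group acting linearly on $\Rb^d$ and preserving $\Cc$.

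Now fix any $v \in \Cc$ and set
\begin{align*}
\bar v := \frac1n \sum_{k=0}^{n-1} \wh g^{\, k}(v).
\end{align*}
Each $\wh g^{\, k}(v)$ lies in $\Cc$, so by convexity $\bar v \in \Cc$; in particular $\bar v \neq 0$ since $0 \notin \Cc$. Moreover $\wh g(\bar v) = \frac1n \sum_{k=1}^{n} \wh g^{\, k}(v) = \bar v$ because $\wh g^{\, n}(v) = v$. Therefore $g$ fixes the point $[\bar v] \in \Omega$. Since $g$ is non-trivial, this contradicts the assumption that $G$ acts freely on $\Omega$, and we conclude that $G$ is torsion free.

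There is essentially no serious obstacle here: the whole argument is an orbit-averaging in the linear cone picture. The only points requiring a little care are the choice of the lift $\wh g$ preserving $\Cc$ and the verification that the scalar $\lambda$ is positive (which is where properness of the cone is used); everything else is immediate from convexity of $\Cc$.
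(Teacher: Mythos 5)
Your proof is correct, but it takes a genuinely different route from the paper. The paper's argument stays entirely inside $\Omega$: given $g$ with $g^n=1$, it takes the convex hull $C_p$ of the finite orbit $\{g^z p : z \in \Zb\}$, observes that $C_p \subset \Omega$ is compact, convex, and $g$-invariant, and invokes the Brouwer fixed-point theorem to produce a fixed point. You instead pass to the cone $\Cc$ over $\Omega$, normalize a lift $\wh{g}$ so that it preserves $\Cc$ and satisfies $\wh{g}^{\,n} = \Id$, and then exhibit the fixed point explicitly as the barycenter $\frac{1}{n}\sum_{k=0}^{n-1}\wh{g}^{\,k}(v)$. The trade-off is clear: your version is constructive and avoids any topological fixed-point theorem, but it requires the (correctly handled) bookkeeping of choosing the lift preserving $\Cc$ and showing the scalar $\lambda$ in $\wh{g}^{\,n} = \lambda\Id$ is positive, which is exactly where properness of $\Cc$ enters for you; the paper's version needs no normalization at all but leans on Brouwer, using properness only implicitly through the fact that convex hulls make sense in an affine chart containing $\Omega$ as a bounded set. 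Both arguments are sound, and yours could equally well be run as a direct averaging of the finite orbit in that affine chart, which would essentially merge the two approaches.
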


\begin{proof}
Suppose that $g^n =1$ for some $g\in G$. For $p \in \Omega$ let $C_p$ be the convex hull of the points $\{ g^z p : z \in \Zb\}$. Then $C_p \subset \Omega$ is compact, convex, and $g$-invariant. So by the Brouwer fixed-point theorem $g$ has a fixed point in $C_p$. Since $G$ acts freely on $\Omega$ we see that $g=1$.
\end{proof}

\begin{proof}[Proof of Proposition~\ref{prop:factor_2}]
Assume $M_1 = \Gamma_1 \backslash \Omega_1$ and $M_2 = \Gamma_2 \backslash \Omega_2$. As in Proposition~\ref{prop:proj_map} we can lift $f_0$ to a $\rho$-equivariant map $T_0:\Omega_1 \rightarrow \Omega_2$  for some homomorphism $\rho: \Gamma_1 \rightarrow \Gamma_2$. 

Let $\Cc_1$ and $\Cc_2$ be proper convex open cones above $\Omega_1$ and $\Omega_2$ respectively. As in Section~\ref{sec:cones} we can lift each $\Gamma_i$ to a discrete group $\Lambda_i \leq \Aut(\Cc_i)$ which acts freely, properly discontinuously, and co-compactly on $\Cc_i$. We can also lift $\rho$ to a homomorphism $\tau:\Lambda_1 \rightarrow \Lambda_2$ and $T_0$ to a linear map $S_0:\Cc_1 \rightarrow \Cc_2$ which is $\tau$-equivariant.

By Proposition~\ref{prop:proj_map}, $U:=\ker S_0$ is a $\Lambda_1$-invariant subspace and by Thereom~\ref{thm:irred} there exists a $\Lambda_1$-invariant decomposition $\Rb^{d_1+1} = U \oplus W$ and proper convex cones $\Cc_U \subset U$ and $\Cc_W \subset W$ such that $\Cc_1 = \Cc_U + \Cc_W$. Let $p_U: \Rb^{d_1+1} \rightarrow U$ and $p_W:\Rb^{d_1+1} \rightarrow W$ be the natural projections and $\pi_U: \Lambda_1 \rightarrow \GL(U)$ and $\pi_W: \Lambda_1 \rightarrow \GL(W)$ be the natural restrictions. 
 
 Now let $V:=S_0(\Rb^{d_1+1})$ then $S_0$ descends to a linear isomorphism $\overline{S}_0: W \rightarrow V$ which maps $\Cc_W$ into $\Cc_2 \cap V$. Moreover
 \begin{align*}
 \tau(\phi)\overline{S}_0(w) =  \tau(\phi)S_0(u,w) = S_0( \phi \cdot (u,w)) = \overline{S}_0(\pi_W(\phi) w)
 \end{align*}
 for $\phi \in \Lambda_1$ and $(u,w) \in \Cc_U + \Cc_W$. In particular, if $\pi_W(\phi)=1$ then $\tau(\phi)\overline{S}_0(w) = \overline{S}_0(w)$ for every $w \in \Cc_W$. Since $\Lambda_2$ acts freely on $\Cc_2$ we see that if $\pi_W(\phi)=1$ then $\tau(\phi)=1$. Thus there exists a homomorphism $\overline{\tau}: \pi_W(\Lambda_1) \rightarrow \Lambda_2$ such that $\tau = \overline{\tau} \circ \pi_W$. The above formula then implies that
  \begin{align*}
\overline{ \tau}(\phi) \circ \overline{S}_0 =  \overline{S}_0 \circ \phi
 \end{align*}
 and the injectivity of $\overline{S}_0$ implies that $\overline{\tau}$ is injective. 
 
Now let 
\begin{align*}
G := \{ \varphi|_V : \varphi \in \Lambda_2, \varphi(V) = V\} \leq \Aut(\Cc_2 \cap V).
\end{align*}
Since the action of $G$ on $\Cc_2 \cap V$ is properly discontinuous and free we see that $G$ is discrete and torsion-free. Then 
\begin{align*}
\pi_W(\Lambda_1) \leq \overline{S}_0^{-1}\Big( (\overline{\tau} \circ \pi_W)(\Lambda_1)\Big) \overline{S}_0 \leq \overline{S}_0^{-1} G \overline{S}_0
\end{align*} 
is also discrete and torsion-free. Now let $\Omega_W$ be the image of $\Cc_W$ in $\Pb(W)$ and $\Gamma_W$ be the image of $\pi_W(\Lambda_1)$ in $\PGL(W)$. Since $\{ e^z \Id : z \in \Zb \} \leq \pi_W(\Lambda_1)$ the group $\Gamma_W$ is discrete and torsion-free in $\PGL(W)$. Then $N := \Gamma_W \backslash \Omega_W$ is a proper convex real projective manifold. Moreover the map $p_W : \Cc_1 \rightarrow \Cc_W$ descends to a projective map $p:M_1 \rightarrow N$ and so $N$ is compact. 

Now suppose that $f \in [f_0]$ then we can lift $f$ to a linear map $S: \Cc_1 \rightarrow \Cc_2$ which is $\tau$-equivariant. We first claim that $\ker S = \ker S_0$. Since $\Lambda_1$ acts co-compactly on $\Cc_1$ and $S,S_0$ are $\tau$-equivariant there exists $R \geq 0$ such that 
\begin{align*}
d_{\Cc_2}(S(p), S_0(p)) \leq R
\end{align*}
for all $p \in \Cc_1$. Then if $v \in \ker S_0$ we have 
\begin{align*}
d_{\Cc_2}(S(p)+tS(v), S_0(p)) \leq R
\end{align*}
whenever $p+tv \in \Cc_1$. Which implies, by the properness  of the metric $d_{\Cc_2}$, that $S(v)=0$. Thus $\ker S_0 \subset \ker S$. Switching the roles of $S_0$ and $S$ in the above argument shows that $\ker S \subset \ker S_0$. So $\ker S = \ker S_0$ and there exists an injective linear map $\overline{S} : W \rightarrow \Rb^{d_2+1}$ such that $S = \overline{S} \circ p_W$. Then $\overline{S}$ is $\overline{\tau}$-equivariant and the map $\overline{S}: \Cc_W \rightarrow \Cc_2$ descends to a map $\overline{f}:N \rightarrow M_2$ such that $f=\overline{f} \circ p$.
 \end{proof}

We can now prove Theorem~\ref{thm:prod_or_aut} whose statement we recall:

\begin{proposition}
With the notation in Proposition~\ref{prop:factor}, $[f_0] \neq \{f_0\}$ if and only if either 
\begin{enumerate}
\item $\Aut(N)$ is infinite 
\item there exists a compact proper convex real projective manifold $L$ and a continuous locally injective map $F:N \times L \rightarrow M_2$ such that 
\begin{enumerate}
\item for any fixed $\ell \in L$ the map $n \in N \rightarrow F(n,\ell)$ is projective,
\item for any fixed $n \in N$ the map $\ell \in L \rightarrow F(n,\ell)$ is projective, and
\item there exists $\ell_0 \in L$ such that $f_0(m) = F(p(m),\ell_0)$ for all $m \in M_1$.
\end{enumerate}
\end{enumerate}
\end{proposition}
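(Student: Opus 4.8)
The strategy is to reduce everything to the analysis of $[\overline{f}_0]$ that was developed in the proof of Theorem~\ref{thm:main_structure}, where $\overline{f}_0 : N \rightarrow M_2$ is the locally injective factor of $f_0$ produced by Proposition~\ref{prop:factor_2}. First I would observe that, since $f = \overline{f} \circ p$ for every $f \in [f_0]$ and $p$ is surjective, the correspondence $f \leftrightarrow \overline{f}$ identifies $[f_0]$ with $[\overline{f}_0]$ (that $\overline{f} \sim \overline{f}_0$ follows by lifting the homotopy through the common factor $p$; conversely any $g \in [\overline{f}_0]$ gives $g \circ p \in [f_0]$). So $[f_0] \neq \{f_0\}$ iff $[\overline{f}_0] \neq \{\overline{f}_0\}$, and from now on we may assume $f_0$ itself is locally injective, i.e.\ the lifted linear map $S_0 : \Cc_1 \rightarrow \Cc_2$ is injective.

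\smallskip

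The ($\Leftarrow$) direction is easy and I would dispatch it first. If $\Aut(N)$ is infinite, pick a nontrivial one-parameter subgroup (or just infinitely many distinct elements) $\psi_t \in \Aut(N)$; then $f_0 \circ \psi_t \circ p$... more carefully, since by Proposition~\ref{prop:factor_2} $f_0 = \overline{f}_0 \circ p$ with $\overline{f}_0 : N \rightarrow M_2$, the maps $\overline{f}_0 \circ \psi_t \circ p$ lie in $[f_0]$ and are pairwise distinct because $\overline{f}_0$ is locally injective and $p$ is surjective; hence $[f_0] \neq \{f_0\}$. In case (2), the maps $m \mapsto F(p(m), \ell)$ for $\ell \in L$ are projective by (a), homotopic to $f_0$ (deform $\ell$ to $\ell_0$ using that $L$ is connected — being a proper convex projective manifold it is path-connected — and invoke (c)), and pairwise distinct because $F$ is locally injective in the $L$-variable by (b) together with local injectivity of $F$ itself. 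Again $[f_0] \neq \{f_0\}$.

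\smallskip

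The substance is the ($\Rightarrow$) direction. Assume $[f_0] \neq \{f_0\}$; reduced to the locally injective case, $S_0$ is injective, so in the notation of Proposition~\ref{prop:factor_2} we have $U = \ker S_0 = 0$, $W = \Rb^{d_1+1}$, $N = M_1$, and $p = \mathrm{id}$. Then $\Proj(\Omega_1,\Omega_2)^\rho$ contains at least two maps, and by Proposition~\ref{prop:center_2} it is a proper convex open set $P^\circ$ of positive dimension in a projective subspace, on which $C := C_{\Gamma_2}(\rho(\Gamma_1))$ acts co-compactly; moreover $[f_0] = C \backslash \Proj(\Omega_1,\Omega_2)^\rho$ is a compact proper convex projective manifold — call it $L$. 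Set $\ell_0 := [T_0] \in L$. The evaluation map is the key device: define $F : M_1 \times L \rightarrow M_2$ by $F(m, \ell) = \ell(m)$, interpreting $\ell \in L$ as (the descent to $M_1$ and $M_2$ of) a $\rho$-equivariant projective map $\Omega_1 \rightarrow \Omega_2$. By construction $F(m, \ell_0) = T_0(m)$ descends to $f_0(m)$, giving (c) with this $\ell_0$; property (a) holds because for fixed $\ell$ the map $m \mapsto \ell(m)$ is precisely the projective map $f_\ell \in [f_0]$; property (b) — that for fixed $m$ the map $\ell \mapsto \ell(m)$ is projective — is exactly the last assertion of Theorem~\ref{thm:main_structure} (the map $f \in [f_0] \mapsto f(m) \in M_2$ is projective), lifted/descended appropriately. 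Continuity of $F$ follows from continuity of evaluation in the compact-open topology together with Lemma~\ref{lem:rank2} identifying that topology with the quotient topology on $C\backslash\Proj(\Omega_1,\Omega_2)^\rho$.

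\smallskip

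It remains to produce the dichotomy between (1) and (2), and this is where I expect the real work — and the main obstacle — to lie. The point is whether $F$ is \emph{locally injective}. If it is, we are in case (2) and done. If it is not, the failure of local injectivity must come from the $C$-action: two nearby pairs $(m,\ell)$ and $(m',\ell')$ with $\ell(m) = \ell'(m')$. Unwinding, local non-injectivity of $F$ should force the existence of a nontrivial element of $C_{\Gamma_2}(\rho(\Gamma_1))$ lying \emph{inside} $\rho(\Gamma_1)$ — i.e.\ $C \cap \rho(\Gamma_1) \neq 1$ — which, as the Remark after the theorem indicates, is the case that feeds into $\Aut(N) = \Aut(M_1)$ being infinite. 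Concretely: an element $\gamma \in C \cap \rho(\Gamma_1)$, $\gamma = \rho(\delta)$, gives via $T \mapsto \gamma \cdot T$ a translation of $\Proj(\Omega_1,\Omega_2)^\rho$ that descends trivially on $L$ but realizes right-translation by $\delta$ on the $M_1$-side; pursuing this, the stabilizer structure yields an infinite subgroup of $\Aut(M_1) = \Aut(N)$. So the logical skeleton is: reduce to $S_0$ injective; build $F$ by evaluation; if $F$ is locally injective, output (2); otherwise extract from the defect a nontrivial intersection $C \cap \rho(\Gamma_1)$, hence (via the positive-dimensionality of $L$ coming from Proposition~\ref{prop:center_2} and the co-compact $C$-action) an infinite $\Aut(N)$, output (1). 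The delicate part — and what I'd spend the most care on — is the precise argument that local non-injectivity of the evaluation map $F$ forces $C \cap \rho(\Gamma_1) \neq 1$ rather than some other pathology, and conversely that when $C \cap \rho(\Gamma_1) = 1$ the $C$-action on the fibers is compatible enough with evaluation that $F$ descends to a genuinely locally injective map on $N \times L$.
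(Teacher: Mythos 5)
Your setup matches the paper's: identifying $[f_0]$ with $[\overline{f}_0]$, taking $L:=[\overline{f}_0]$ with the structure from Theorem~\ref{thm:main_structure}, defining $F$ by evaluation, and checking continuity and properties (a)--(c) is exactly right, and your ($\Leftarrow$) direction is essentially fine. But the heart of the theorem is the dichotomy between (1) and (2), and there you have a genuine gap that you yourself flag: the claim that failure of local injectivity of $F$ forces $C_{\Gamma_2}(\rho(\Gamma_1)) \cap \rho(\Gamma_1) \neq 1$, and that this in turn makes $\Aut(N)$ infinite, is never proved, and neither implication is routine. In fact the second one is suspect as a shortcut: a nontrivial element of $C_{\Gamma_2}(\rho(\Gamma_1)) \cap \rho(\Gamma_1)$ corresponds (via injectivity of $\rho$ in the reduced, locally injective case) to a nontrivial \emph{central} element of $\Gamma_1$, which, being a deck transformation, acts \emph{trivially} on $N=M_1$ and so produces no automorphism of $N$ by itself. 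Extracting an infinite $\Aut(N)$ from a nontrivial centralizer requires knowing that a discrete group acting cocompactly on the cone with nontrivial centralizer forces the cone to split --- which is precisely the reducibility analysis your plan is trying to bypass.

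The paper instead cases directly on whether $\Lambda_W=\pi_W(\Lambda_1)$ acts irreducibly on $W$, using Vey's theorem (Theorem~\ref{thm:irred}). If not, $\Cc_W=\Cc_{W,1}+\Cc_{W,2}$ and the block-scalar group $\left\{\begin{pmatrix} a\Id_{W_1} & 0\\ 0 & b\Id_{W_2}\end{pmatrix}\right\}$ centralizes $\Lambda_W$ and descends to an infinite subgroup of $\Aut(N)$: case (1). If $\Lambda_W$ is irreducible, local injectivity of $F$ is proved by taking $T_1(p_1)=T_2(p_2)$, using equivariance together with Lemma~\ref{lem:orb_ext} to show $T_1$ and $T_2$ have the \emph{same image} $P$, and then studying $\Phi:=\overline{T}_1^{-1}\circ\overline{T}_2$, an automorphism of $\Omega_W$ commuting with $\Gamma_W$: it stays at bounded distance from the identity, hence fixes every extreme point of $\Omega_W$ by Lemma~\ref{lem:bd_behavior}, hence is diagonal in a basis of extreme points, and irreducibility of $\Gamma_W$ then forces it to be scalar, so $T_1=T_2$. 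Both halves of this --- the equality of images and the use of irreducibility to kill $\Phi$ --- are absent from your proposal, and without some version of them the dichotomy is not established.
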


\begin{proof} We will freely use the notation from the proof of Proposition~\ref{prop:factor_2}. Let $\Lambda_W:=\pi_W(\Lambda_1)$. \newline 

\noindent \textbf{Case 1:} $\Lambda_W \leq \GL(W)$ does not act irreducibly on $W$. Then by Theorem~\ref{thm:irred} there exists a $\Lambda_W$-invariant decomposition $W = W_1 \oplus W_2$ and proper convex open cones $\Cc_{W,1} \subset W_1$ and $\Cc_{W,2} \subset W_2$ such that $\Cc_{W} = \Cc_{W,1} + \Cc_{W,2}$. Then, since this decomposition is $\Lambda_W$-invariant, we see that the group
\begin{align*}
H := \left\{ \begin{pmatrix} a \Id_{W_1} & 0 \\ 0 & b\Id_{W_2} \end{pmatrix} : a,b >0\right\}
\end{align*}
is a subgroup of $\Aut(\Cc_W)$ and centralizes $\Lambda_W$. Then the action of $H$ on $\Cc_W$ descends to a projective action on $N$ and hence $\Aut(N)$ is infinite. \newline
 
\noindent \textbf{Case 2:}  $\Lambda_W \leq \GL(W)$ acts irreducibly on $W$. Suppose $f_0 = \overline{f}_0 \circ p$. Let $L := [\overline{f}_0]$ and $F:N \times L \rightarrow M_2$ be the map given by $F(n,\ell) = \ell(n)$. By Theorem~\ref{thm:main_structure}
\begin{enumerate}
\item  $L$ is a compact proper convex real projective manifold, 
\item $F$ is continuous,
\item for any fixed $\ell \in L$ the map $n \in N \rightarrow F(n,\ell)$ is projective,
\item for any fixed $n \in N$ the map $\ell \in L \rightarrow F(n,\ell)$ is projective, and
\item $f(m) = F(p(m), \overline{f}_0)$ for any $m \in M_1$.
\end{enumerate}

It remains to show that $F$ is locally injective. To see this lift $F$ to the map $\wt{F}: \Omega_1 \times \Proj(\Omega_1, \Omega_2)^{\rho} \rightarrow \Omega_2$ given by $\wt{F}(p,T) = T(p)$. Then it is enough to show that $\wt{F}$ is injective. So suppose that $T_1(p_1) = T_2(p_2)$ for some $p_1, p_2 \in \Omega_1$ and $T_1,T_2 \in  \Proj(\Omega_1, \Omega_2)^{\rho}$. 

Given a subset $A \subset \Pb(\Rb^{d+1})$ let $\ip{A}$ be the smallest projective subspace containing $A$. Notice that $\ip{T_i(\Omega_1)} = \ip{T_i(\Ext(\Omega_1))}$. Moreover, by Lemma~\ref{lem:orb_ext} and the fact that 
\begin{align*}
T_1(\phi \cdot p_1) = \rho(\phi)T_1(p_1) = \rho(\phi)T_2(p_2) = T_2( \phi \cdot p_2) \subset \ip{ T_2(\Omega_1)},
\end{align*}
we see that 
\begin{align*}
T_1(\Ext(\Omega_1)) \subset \ip{ T_2(\Omega_1)}.
\end{align*}
Similarly, $T_2(\Ext(\Omega_1)) \subset \ip{ T_2(\Omega_1)}$. Thus $T_1$ and $T_2$ have the same image. Let $P = \ip{ T(\Omega_1)} = \ip{T(\Omega_2)}$. Now by the proof of Proposition~\ref{prop:factor_2} the maps $T_1$ and $T_2$ yield $\overline{\tau}$-equivariant maps $\overline{T}_1 : \Omega_W \rightarrow \Omega_2 \cap P$ and $\overline{T}_2 : \Omega_W \rightarrow \Omega_2 \cap P$.  Notice that $\overline{T}_1, \overline{T}_2 : \Pb(W) \rightarrow P$ are isomorphisms. Then $\Phi = \overline{T}_1^{-1} \circ \overline{T}_2 : \Omega_W \rightarrow \Omega_W$ is an $\Gamma_W$-invariant map. We claim that $\Phi = [\Id_W]$. To see this first observe that there exists $R \geq 0$ such that 
\begin{align*}
d_{\Omega_W}(\Phi(p), p) \leq R
\end{align*}
for all $p \in \Omega_W$. This follows from the fact that $\Gamma_W$ acts co-compactly on $\Omega_W$. Then by Lemma~\ref{lem:bd_behavior} we see that $\Phi(e) = e$ for every extreme point of $\Omega_W$. Now fix a basis $v_1, \dots, v_k$ of $W$ such that $[v_1],  \dots, [v_k]$ are all extreme points of $\Omega_W$. Then with respect to this basis $\Phi$ is given by a  matrix of the form 
\begin{align*}
\Phi = \begin{bmatrix} \alpha_1 & &  \\ & \ddots & \\ & & \alpha_k \end{bmatrix}.
\end{align*}
Since $\Phi$ commutes with $\Gamma_W$, $\Gamma_W$ preserves the projective subspace
\begin{align*}
E_{\lambda} = [\operatorname{Span}(v_i : \alpha_i = \lambda)]
\end{align*}
for any $\lambda \in \Rb$. But since $\Gamma_W$ acts irreducibly on $\Pb(W)$ we must have that 
\begin{align*}
\alpha_1 = \dots = \alpha_k
\end{align*}
and hence that $\Phi = [\Id_W]$. But this implies that $\overline{T}_1 = \overline{T}_2$ and hence that $T_1 = T_2$. 
\end{proof}

\section{Proof of Theorem~\ref{thm:main}}

Using Proposition~\ref{prop:compact} and Proposition~\ref{prop:factor} it is enough to show:

\begin{lemma}
Suppose $f_0: M_1 \rightarrow M_2$ is a locally injective projective map between two compact proper convex real projective manifolds. If $M_2$ is strictly convex then $[f_0]=\{f_0\}$.  
\end{lemma}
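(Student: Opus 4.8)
The strategy is to lift two homotopic locally injective projective maps $f_0, f_1 \in [f_0]$ to $\tau$-equivariant linear maps $S_0, S_1 : \Cc_1 \rightarrow \Cc_2$ (as in Section~\ref{sec:cones}), show they induce boundary maps on $\partial\Omega_1$ that agree, and then use strict convexity of $\Omega_2$ to conclude $S_0$ and $S_1$ agree on all of $\Cc_1$. Since $f_0$ is locally injective, by the proof of Proposition~\ref{prop:factor_2} the kernel $\ker S_i$ is trivial (the $W$-part is everything, i.e.\ $N = M_1$), so $S_0, S_1$ are injective linear maps and the induced projective maps $T_0, T_1 : \Omega_1 \rightarrow \Omega_2$ are embeddings. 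By Lemma~\ref{lem:bd_dist} there is $R \geq 0$ with $d_{\Cc_2}(S_0(x), S_1(x)) \leq R$ for all $x \in \Cc_1$; passing to $\Omega_1$, we get $d_{\Omega_2}(T_0(x), T_1(x)) \leq R$ for all $x \in \Omega_1$.

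\textbf{Key steps.} First I would fix an extreme point $\xi \in \Ext(\Omega_1)$ and a sequence $x_n \in \Omega_1$ with $x_n \rightarrow \xi$. Since $T_0$ is a projective embedding onto a bounded convex subset of $\Omega_2$, the points $T_0(x_n)$ converge to a point $\eta_0 := \overline{T}_0(\xi) \in \partial\Omega_2$, where $\overline{T}_0 : \Pb(\Rb^{d_1+1}) \rightarrow \Pb(\Rb^{d_2+1})$ is the (continuous, since $S_0$ is injective) projective map extending $T_0$; similarly $T_1(x_n) \rightarrow \eta_1 := \overline{T}_1(\xi) \in \partial\Omega_2$. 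Because $d_{\Omega_2}(T_0(x_n), T_1(x_n)) \leq R$ for all $n$, Lemma~\ref{lem:bd_behavior} forces either $\eta_0 = \eta_1$, or $\eta_0$ and $\eta_1$ lie in the interior of a common line segment in $\partial\Omega_2$. Here is where strict convexity of $\Omega_2$ enters: $\partial\Omega_2$ contains no nondegenerate line segments, so the second alternative is impossible and $\overline{T}_0(\xi) = \overline{T}_1(\xi)$. This holds for every $\xi \in \Ext(\Omega_1)$.

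\textbf{Finishing.} By Lemma~\ref{lem:orb_ext} (and the remark following it, using Theorem~\ref{thm:irred} — if $\Omega_1$ is reducible one argues factor-by-factor, or more simply: $\ip{\Ext(\Omega_1)} = \Pb(\Rb^{d_1+1})$ since $\Omega_1$ is a proper convex open set, so its extreme points span), the extreme points of $\Omega_1$ are Zariski-dense in the sense that two projective maps of rank $\geq 2$ agreeing on $\Ext(\Omega_1)$ agree everywhere. Concretely, $\overline{T}_0$ and $\overline{T}_1$ are projective maps that agree on $\Ext(\Omega_1)$; since $\Ext(\Omega_1)$ spans $\Pb(\Rb^{d_1+1})$ and contains enough points in general position (a projective frame), and both maps have full rank, $\overline{T}_0 = \overline{T}_1$ as elements of $\Pb(\Lin(\Rb^{d_1+1},\Rb^{d_2+1}))$. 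Restricting to $\Omega_1$ gives $T_0 = T_1$, hence $f_0 = f_1$, so $[f_0] = \{f_0\}$. \textbf{Main obstacle.} The delicate point is making the boundary extension rigorous and correctly invoking strict convexity: one must ensure $T_0(x_n)$ actually converges in $\partial\Omega_2$ (not just subconverges), which uses injectivity of $S_0$ so that $\overline{T}_0$ is globally defined and continuous on $\Pb(\Rb^{d_1+1}) \supset \overline{\Omega}_1$, and one must handle the possibility that $\eta_0$ or $\eta_1$ is not an extreme point of $\Omega_2$ — strict convexity is exactly what rules out the segment alternative in Lemma~\ref{lem:bd_behavior}. A secondary subtlety is the reducible case for $\Omega_1$, where the density statement for $\Ext(\Omega_1)$ must be applied with a little care, but the spanning property of extreme points of any bounded convex body suffices.
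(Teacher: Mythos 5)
Your overall strategy (compare boundary values of the two lifts and use strict convexity of $\Omega_2$ via Lemma~\ref{lem:bd_behavior}) is the same as the paper's, but there are two genuine gaps. The first is the assertion that $\eta_0 = \overline{T}_0(\xi)$ lies in $\partial\Omega_2$. Injectivity of $S_0$ gives you a continuous extension $\overline{T}_0$ to $\overline{\Omega}_1$, but it does not place $\overline{T}_0(\partial\Omega_1)$ in $\partial\Omega_2$: a projective embedding can perfectly well carry $\overline{\Omega}_1$ onto a compact convex set contained in the interior of $\Omega_2$ (think of a contraction of a disk into itself). Without $\eta_0\in\partial\Omega_2$ you cannot invoke Lemma~\ref{lem:bd_behavior} at all. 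The paper devotes a separate argument to exactly this point: it uses that $T_0$ is $\rho$-equivariant, that $\rho$ is injective (because $T_0$ is), that $\Gamma_1$ acts cocompactly on $\Omega_1$, and that $\Gamma_2$ acts properly discontinuously on $\Omega_2$, to show that $T_0(p_n)$ cannot stay in a compact subset of $\Omega_2$ when $p_n\to x\in\partial\Omega_1$. This is the step where the dynamics enters, and it is missing from your write-up.

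The second gap is the finishing step. Knowing $\overline{T}_0=\overline{T}_1$ on $\Ext(\Omega_1)$ and that $\Ext(\Omega_1)$ spans is not enough to conclude $[S_0]=[S_1]$: if $[S_0 v]=[S_1 v]$ on a spanning set you only get $S_1 v=\lambda_v S_0 v$ with $v$-dependent scalars. Your appeal to a projective frame inside $\Ext(\Omega_1)$ fails in general, since $\Ext(\Omega_1)$ may consist of exactly $d_1+1$ points (e.g.\ when $\Omega_1$ is a simplex), and two distinct projective maps can agree on the vertices of a simplex. The fix is available and is what the paper does: run the boundary argument at \emph{every} $x\in\partial\Omega_1$ (strict convexity of $\Omega_2$ makes $T_0(x)$ an extreme point of $\Omega_2$, so Lemma~\ref{lem:bd_behavior} applies and gives $T_1(x)=T_0(x)$), and then recover $T_i(p)$ for interior $p$ as $T_i(\ell_1)\cap T_i(\ell_2)$ for two distinct lines $\ell_1,\ell_2$ through $p$, each image line being determined by the images of $\ell_j\cap\partial\Omega_1$. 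With those two repairs your argument becomes the paper's proof.
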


\begin{proof}
Assume $M_1 = \Gamma_1 \backslash \Omega_1$ and $M_2 = \Gamma_2 \backslash \Omega_2$. As in Subsection~\ref{subsec:proj_maps}, we can lift $f_0$ to a map $T_0:\Omega_1 \rightarrow \Omega_2$ and $T_0$ is $\rho$-equivariant for some homomorphism $\rho: \Gamma_1 \rightarrow \Gamma_2$. Since $f_0$ is locally injective, $T_0$ has full rank and hence is injective. Then since $T_0$ is injective, so is $\rho$. 

We first claim that $T_0(\partial \Omega_1) \subset \partial \Omega_2$. Suppose not, then there exists some $x \in  \partial \Omega_1$ so that $T_0(x) \in \Omega_2$. Pick $p_n \in \Omega_1$ so that $p_n \rightarrow x$. Then there exists $R_0 \geq 0$ so that
\begin{align*}
d_{\Omega_2}(T_0(p_n), T_0(x)) \leq R_0
\end{align*}
for all $n \in \Nb$.  There also exists $\varphi_n \in \Gamma_1$ and $R_1 \geq 0$ so that 
\begin{align*}
d_{\Omega_1}(p_n, \varphi_n p_1) \leq R_1
\end{align*}
for all $n \in \Nb$. Notice that $\varphi_n \rightarrow \infty$ in $\PGL_{d_1+1}(\Rb)$ because $\Aut(\Omega_1)$ is a closed subgroup and $x \in \partial \Omega$. Then
\begin{align*}
d_{\Omega_2}(\rho(\varphi_n) T_0(p_1), T_0(x)) \leq d_{\Omega_2}(T_0(\varphi_n p_1), T_0(p_n)) + d_{\Omega_2}(T_0(p_n), T_0(x)) \leq R_0+R_1.
\end{align*}
But $\Gamma_2$ acts properly discontinuously on $\Omega_2$ and so the set $\{ \rho(\varphi_n)  : n \in \Nb\}$ is finite. Since $\rho$ is injective, the set $\{ \varphi_n : n \in \Nb\}$ is also finite which contradicts the fact that $\varphi_n \rightarrow \infty$ in $\PGL_{d+1}(\Rb)$. Thus $T_0(\partial \Omega_1) \subset \partial \Omega_2$.

Given $f_1 \in [f_0]$ we can lift $f_1$ to a $\rho$-equivariant projective map $T_1:\Omega_1 \rightarrow \Omega_2$. Since both $T_0$ and $T_1$ are $\rho$-equivariant and $\Gamma_1$ acts co-compactly on $\Omega_1$ we see that there exists $R_2 \geq 0$ so that
\begin{align*}
d_{\Omega_2}(T_1(x), T_2(x)) \leq R_2
\end{align*}
for all $x \in \Omega_1$. 

Next we claim that $T_0|_{\partial \Omega_1} = T_1|_{\partial \Omega_1}$. Fix a point $x \in \partial \Omega$ and a sequence $p_n \in \Omega$ so that $p_n \rightarrow x$. Suppose that $y=T(x)$. Since $y \in \partial \Omega_2$ and $\Omega_2$ is strictly convex, $y$ is an extreme point of $\Omega_2$. Moreover, 
\begin{align*}
d_{\Omega_2}(T_0(p_n), T_1(p_n)) \leq R_2
\end{align*}
for all $n \in \Nb$. So by Lemma~\ref{lem:bd_behavior} we see that $T_1(x) = \lim_{n \rightarrow \infty} T_1(p_n)= y$. 

Finally we clam that $T_0=T_1$. For a point $p \in \Omega_1$ let $\ell_1$ and $\ell_2$ be two distinct projective lines through $p$. Then
\begin{align*}
T_i(p) = T_i(\ell_1) \cap T_i(\ell_2).
\end{align*}
However the projective line $T_i(\ell_j)$ is completely determined by $T_i(\ell_j \cap \partial \Omega_1)$ and $T_0|_{\partial \Omega_1} = T_1|_{\partial \Omega_1}$. Hence we see that $T_0 = T_1$. 
\end{proof}

\bibliographystyle{alpha}
\bibliography{hilbert}

\end{document}